\newtheorem{theorem}{Theorem}[section]
\newtheorem{lemma}[theorem]{Lemma}
\newtheorem{proposition}[theorem]{Proposition}
\newtheorem{corollary}[theorem]{Corollary}
\theoremstyle{definition}
\theoremstyle{remark}
\numberwithin{equation}{section} \errorcontextlines=0
\newcommand{\diag}{\mbox{diag}}
\newcommand{\ot}{\otimes}
\newcommand{\GL}{\mathrm{GL}}
\newcommand{\gl}{\mathfrak{gl}}
\newcommand{\g}{\mathfrak{g}}
\newcommand{\sdet}{\mathrm{sdet}}
\newcommand{\wt}{\widetilde}
\newcommand{\wh}{\widehat}
\newcommand{\qin}{q^{-1}}
\newcommand{\ol}{\overline}
\newcommand{\tw}{\mathrm{tw}}
\begin{document}

\title[Twisted q-Yangians and Sklyanin determinants]
{Twisted q-Yangians and Sklyanin determinants}
%\title[Quantum affine algebras and twisted q-Yangians]
%{Quantum affine algebras and twisted q-Yangians}
\author{Naihuan Jing}
\address{Department of Mathematics, North Carolina State University, Raleigh, NC 27695, USA}
\email{jing@ncsu.edu}

\author{Jian Zhang}
\address{School of Mathematics and Statistics, and Hubei Key Lab-Math. Sci., Central China Normal University, Wuhan, Hubei 430079, China}
\email{jzhang@ccnu.edu.cn}
\thanks{{\scriptsize
\hskip -0.6 true cm MSC (2020): 17B37; Secondary: 17B10, 81R50, 14M17, 15A15, 13A50
\newline Keywords: quantum groups, $q$-determinants, Sklyanin determinants, $q$-minor identities, quantum symmetric spaces
}}

\thanks{{\scriptsize\hskip -0.6 true cm *Corresponding author: Jian Zhang}}

\begin{abstract} $q$-Yangians can be viewed both as quantum deformations of the loop algebras of upper triangular Lie algebras and deformations of the
Yangian algebras. In this paper, we study the quantum affine algebra as a product of two copies of the $q$-Yangian algebras. This viewpoint
enables us to investigate the invariant theory of quantum affine algebras and their twisted versions. We introduce the twisted
Sklyanin determinant for twisted quantum affine algebras and establish various identities for the Sklyanin determinants.
\end{abstract}
\maketitle
\section{Introduction}

Let $\mathfrak g$ be a complex simple Lie algebra, the Yangian $\mathrm Y(\mathfrak g)$ was defined by Drinfeld \cite{Dr} as the
algebraic structure to solve the rational Yang-Baxter equation.
As an algebra, $\mathrm Y(\mathfrak{gl}_N)$ deforms the loop algebra $\mathfrak {gl}_N[t]$
and there are two important homomorphisms:
\begin{equation}\label{e:ev}
\mathrm U(\mathfrak{gl}_N)\hookrightarrow \mathrm  Y(\mathfrak{gl}_N), \qquad \mathrm Y(\mathfrak{gl}_N)\longrightarrow \mathrm U(\mathfrak gl_N).
\end{equation}
Thus many representation problems of $\mathrm U(\mathfrak{gl}_N)$ can be better understood over the Yangian algebra $\mathrm Y(\mathfrak{gl}_N)$,
see \cite{M} for examples.

However for other classical Lie algebras $\mathfrak g_N=\mathfrak o_{2n+1}, \mathfrak {sp}_{2n}, \mathfrak o_{2n}$,
there are no such evaluation homomorphisms $\mathrm Y(\mathfrak{g}_N)\longrightarrow \mathrm U(\mathfrak g_N)$. Olshanski introduced the twisted Yangian $\mathrm Y^{\mathrm {tw}}(\mathfrak g_N)$ for $\mathfrak g_N=\mathfrak o_{N}, \mathfrak {sp}_{N}$ as coideal subalgebras of $\mathrm Y(\mathfrak{gl}_N)$ corresponding to the orthogonal and symplectic types.
 It is known that $\mathrm Y^{\mathrm {tw}}(\mathfrak g_N)$ are
also deformations of the enveloping algebras $\mathrm U(\mathfrak g_N)$ and more importantly there are canonical homomorphisms
$$\mathrm U(\mathfrak{g}_N)\hookrightarrow \mathrm Y^{\mathrm {tw}}(\mathfrak{g}_N), \qquad \mathrm Y^{\mathrm {tw}}(\mathfrak{g}_N)\longrightarrow \mathrm U(\mathfrak g_N)$$
which provide the natural lifting and have important applications for studying $\mathrm U(\mathfrak g_N)$.

On the other hand, the enveloping algebra $\mathrm U(\mathfrak g)$ has another quantum deformation $\mathrm U_q(\mathfrak g)$ corresponding to the trigonometric Yang-Baxter $R$-matrix introduced by Drinfeld and Jimbo \cite{Dr, J}.

For a classical symmetric
pair $(\mathfrak g, \mathfrak{k})$, the twisted  quantum enveloping algebra
$\mathrm U^{\mathrm{tw}}_q(\mathfrak k)$ was introduced by Noumi \cite{No} and Dijkhuizen-Noumi-Sugitani \cite{DN, DNS}, where the algebra $\mathrm U^{\mathrm{tw}}_q(\mathfrak{so}_N)$ was studied earlier by \cite{GK}.
In the first part of the paper, we consider the twisted quantum enveloping
algebras $\mathrm U^{\mathrm{tw}}_q(\mathfrak {o}_N)$ and $\mathrm U^{\mathrm{tw}}_q(\mathfrak{sp}_N)$ corresponding to the symmetric pairs.
\begin{equation}
\begin{aligned}
\mathrm{AI}: \qquad & (\mathfrak{gl}_N, \mathfrak{o}_N),\\
\mathrm{AII}: \qquad & (\mathfrak{gl}_{2n}, \mathfrak{sp}_{2n}).
\end{aligned}
\end{equation}

In 2003, Molev, Ragoucy, and Sorba used the R-matrix method to formulate the $q$-Yangian $\mathrm Y_q(\mathfrak{gl}_N)$  as a $q$-deformation of
 $\mathrm Y(\mathfrak{gl}_N)$
by replacing the rational $R$-matrix by the spectral trigonometric $R$-matrix \cite{MRS}.
They introduced twisted q-Yangians $\mathrm Y^{\mathrm {tw}}_q(\mathfrak{g}_N)$ as coideal subalgebras of the quantum affine algebra $\mathrm U_q(\widehat{\mathfrak{gl}}_N)$ \cite{DF} corresponding to the
symmetric spaces of orthogonal and symplectic types. Note that unlike the Yangian case, the algebras
$\mathrm Y^{\mathrm {tw}}_q(\mathfrak{g}_N)$ are generated by
quadratic elements from both the upper and lower triangular subalgebras $\mathrm U^{\pm}_q(\widehat{\mathfrak{gl}}_N)$
according to the imaginary triangular decompositions
\begin{equation}
\widehat{\mathfrak g}=\widehat{\mathfrak g}^+\oplus \widehat{\mathfrak g}^0\oplus \widehat{\mathfrak g}^-
\end{equation}
where $\widehat{\mathfrak g}^{\pm}=\mathfrak g\otimes \mathbb C[t^{\pm 1}]t^{\pm 1}$. Later
Lu established the isomorphism between the twisted $q$-Yangians and affine $\imath $quantum groups associated to type   $\mathrm{AI}$ \cite{Lu}.
It turns out that there are natural homomorphisms:
\begin{align*}
&\mathrm U^{\mathrm {tw}}_q(\mathfrak{g}_N)\hookrightarrow \mathrm Y^{\mathrm {tw}}_q(\mathfrak{g}_N),\\
&\mathrm Y^{\mathrm {tw}}_q(\mathfrak{g}_N)\longrightarrow \mathrm U^{\mathrm {tw}}_q(\mathfrak g_N).
\end{align*}

A different approach to quantum symmetric spaces was described for arbitrary symmetric pair $(\mathfrak g, \mathfrak k)$ by Letzter \cite{L}, which
shows the importance of the coideal construction. In type $A$ case, the quantum minors and quantum determinants satisfy nice algebraic equations
that generalize the classical identities \cite{HH, ER, KL, bL}.
Recently we have studied the dual quantum symmetric spaces of finite types $\mathrm{AI}$ and $\mathrm{AII}$
for the quantum coordinate algebra
and determined their centers using the Sklyanin determinants and quantum Pfaffians \cite{JZ2}, and we have showed that
the Sklyanin determinants satisfy similar identities like the quantum determinant. It is natural to seek for
a similar construction for the quantum symmetric spaces associated with quantum affine algebras.

The aim of this paper is to study the quantum affine symmetric spaces of orthogonal and symplectic types
and also partly generalize the invariant theory of the Yangians \cite{M2} to the quantum affine algebras. After reviewing the quantum symmetric spaces
of type $\mathrm{AI}, \mathrm{AII}$ as in \cite{MRS},
we formulate the quantum affine symmetric spaces using the R-matrix method \cite{FRT, RTF} subject to certain reflective RTT equation and
introduce the notion of Sklyanin determinants in both cases.
%{\color{red}In particular,
%we use the Sklyanin determinant to generate the center of the quantum symmetric spaces and
%then determine the center completely in both types.
%}

In the third part of the paper we obtain several identities for the quantum Sklyanin determinant. The key identities can be expressed as
minor identities for the quantum Sklyanin determinant, which correspond to the classical identities \cite{BS} for the quantum determinant over the quantum general linear group.
Similar to the Yangian and twisted Yangians (cf. \cite{M2}), we will generalize the $q$-Jacobi identities, $q$-Cayley's complementary identities, the $q$-Sylvester identities and Muir's theorem to Sklyanin determinants
both in the quantum orthogonal and quantum symplectic situations.
In a sense, we have generalized several key identities for the general linear, orthogonal and symplectic groups to their counterparts to the
quantum affine symmetric spaces.

\section{Coideal subalgebras of $\mathrm{U}_q(\gl_N)$}
In this section we briefly recall the orthogonal and symplectic coideals $\mathrm{U}_{q}^{\tw} (\mathfrak{o}_{N})$ and $\mathrm{U}_{q}^{\tw} (\mathfrak{sp}_{N})$
of the
quantum algebra $\mathrm{U}_{q} (\mathfrak{gl}_{N})$  \cite{MRS} to prepare for our further study of the
quantum affine algebras.

Let $R$ be the matrix in $\mathrm{End}(\mathbb C^N\otimes \mathbb C^N)\simeq (\mathrm{End}\mathbb C^N)^{\ot 2}$ defined by:
\begin{align}\nonumber
&R=q\sum_{i}E_{ii}\otimes E_{ii}+ \sum_{i\neq j}E_{ii}\otimes E_{jj}
+(q-q^{-1})\sum_{i<j}E_{ij}\otimes E_{ji},
\end{align}
where $E_{ij}$ are the unit matrices in $\mathrm{End}\mathbb C^N$, and $q$ is a complex number often assumed to be not a root of unity. It is known that
$R$ satisfies the well-known Yang-Baxter equation:
\begin{align*}
R_{12}R_{13}R_{23}=R_{23}R_{13}R_{12},
\end{align*}
where $R_{ij}\in \mathrm{End}(\mathbb C^N\otimes \mathbb C^N\otimes \mathbb C^N)$ acts on the $i$th and $j$th copies of $\mathbb C^N$ as $R$ does on $\mathbb C^N\otimes \mathbb C^N$.

The {\it quantum enveloping  algebra} $\mathrm{U}_{q}(\gl_{N})$ is generated by the elements
$l_{ij}^{\pm}$
 with $1\leq i, j\leq N$ subject to the relations
\begin{align}
 l^{-}_{ij}&=l^+_{ji}=0 ,\ 1\leq i<j\leq N,\\
 l^+_{ii}l^-_{ii}&=l^-_{ii}l^+_{ii}=1,\ 1\leq i\leq N,\\ \label{RTT relation}
 RL_1^{\pm}L_2^{\pm}&=L_2^{\pm}L_1^{\pm}R,\ \quad  R L^+_1  L^-_2=L^-_2 L^+_1 R.\
\end{align}
Here $L^{\pm}$ are the matrices
 \begin{equation}
  L^{\pm}=\sum_{i,j}l^{\pm}_{ij}\otimes E_{ij},
 \end{equation}
which are regarded as elements in the algebra $\mathrm{U}_{q}(\gl_{N})\otimes \mathrm{End}(\mathbb{C}^{N})$. The elements in the $R$-matrix relation \eqref{RTT relation}
are elements in
$\mathrm{U}_{q}(\gl_{N})\otimes \mathrm{End}(\mathbb{C}^{N}) \otimes \mathrm{End}(\mathbb{C}^{N})$
and the subindices of $L^{\pm}$ indicate the copies of End $\mathbb{C}^{N}$ where $L^{\pm}$ act; e.g. $L^{\pm}_{1}=L^{\pm}\otimes 1$.
In terms of the generators the defining relations between the $l^{\pm}_{ij}$ can be written as

\begin{equation} \label{relation t}
q^{\delta_{ij}}l^{\pm}_{ia}l^{\pm}_{jb}-q^{\delta_{ab}}l^{\pm}_{jb}l^{\pm}_{ia}=(q-q^{-1})(\delta_{b<a}-\delta_{i<j})l^{\pm}_{ja}l^{\pm}_{ib}
\end{equation}
where $\delta_{i<j}$ equals 1 if $i<j$ and $0$ otherwise.

The relations involving both $l^+_{ij}$ and $l^-_{ij}$ have the following form
\begin{equation}
q^{\delta_{ij}}l^+_{ia}l^-_{jb}-q^{\delta_{ab}}l^-_{jb}l^+_{ia}
=(q-q^{-1})(\delta_{b<a}l^-_{ja}l^+_{ib}-\delta_{i<j}l^+_{ja}l^-_{ib}).
\end{equation}

\subsection{Orthogonal case}
The {\it twisted quantum enveloping algebra}
$\mathrm{U}_{q}^{\tw}(\mathfrak{o}_{N})$
is an unital algebra
generated by $s_{ij},1\leq i<j\leq N$ subject to the reflection relations
 \begin{equation}\label{q-orth}
 \begin{split}
 s_{ij}=0,\ 1\leq i<j\leq N,\\
 s_{ii}=1,\ 1\leq i\leq N, \\
 RS_1R^{t}S_2=S_2 R^{t}S_1R
 \end{split}
\end{equation}
where $S=(s_{ij})_{N\times N}$  and $R^t=R^{t_1}$ denotes the partial transpose in the first tensor factor:
%\begin{align}\label{q-orth}
% s_{ij}=0,\ 1\leq i<j\leq N, s_{ii}=1,\ 1\leq i\leq N.
% \end{align}
\begin{equation}
R^{t_1}=q\sum_{1\leq i\leq N}e_{ii}\otimes e_{ii}+ \sum_{i\neq j}e_{ii}\otimes e_{jj}
+(q-q^{-1})\sum_{i>j}e_{ji}\otimes e_{ji}.
\end{equation}
The reflection relations can be written as
\begin{equation}
\begin{split}
q^{\delta_{aj}+\delta_{ij}}s_{ia}s_{jb}&-q^{\delta_{ab}+\delta_{ib}}s_{jb}s_{ia}
=(q-q^{-1})q^{\delta_{ai}}(\delta_{b<a}-\delta_{i<j})s_{ja}s_{ib}\\
&+(q-q^{-1})(q^{\delta_{ab}}\delta_{b<i}s_{ji}s_{ba}-q^{\delta_{ij}}\delta_{a<j}s_{ij}s_{ab})\\
&+(q-q^{-1})^{2}(\delta_{b<a<i}-\delta_{a<i<j})s_{ji}s_{ab}
\end{split}
\end{equation}
where $\delta_{i<j}$ or $\delta_{i<j<k}$ equals $1$ if the  subindex inequality is satisfied and $0$ otherwise.

\begin{theorem}\cite{MRS}\label{embedding thm orth1}
The map $S\mapsto L^-(L^+)^{t}$  defines
an algebra embedding of
$\mathrm{U}_{q}^{\tw}(\mathfrak{o}_{N}) \longrightarrow \mathrm{U}_{q} (\gl_{N}) $.
The monomials
\begin{equation}
\begin{split}
 s_{21}^{k_{21}}s_{31}^{k_{31}}s_{32}^{k_{32}}\cdots s_{N1}^{k_{N1}}s_{N2}^{k_{N2}}\cdots s_{N,N-1}^{k_{N,N-1}}\\
\end{split}
\end{equation}
form a basis of  the algebra $\mathrm{U}_{q}^{\tw}(\mathfrak{o}_{N})$,
where $k_{ij}$ are nonnegative integers.
\end{theorem}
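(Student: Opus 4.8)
**The plan is to establish the theorem in two independent parts: the well-definedness/injectivity of the map, and the basis (PBW-type) statement.**

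For the embedding, the first task is to verify that the assignment $S \mapsto L^-(L^+)^t$ is a well-defined algebra homomorphism, i.e.\ that the matrix $\widetilde S := L^-(L^+)^t$ satisfies the reflection equation $R\widetilde S_1 R^t \widetilde S_2 = \widetilde S_2 R^t \widetilde S_1 R$ in $\mathrm{U}_q(\gl_N)\otimes(\mathrm{End}\,\mathbb C^N)^{\otimes 2}$. I would substitute $\widetilde S_a = L^-_a (L^+_a)^{t_a}$ into both sides and reduce everything to the RTT relations \eqref{RTT relation}. The key technical device is to rewrite the RTT relations in their transposed forms: from $R L^\pm_1 L^\pm_2 = L^\pm_2 L^\pm_1 R$ and $R L^+_1 L^-_2 = L^-_2 L^+_1 R$ one derives companion identities involving $(L^+)^{t}$ and $R^{t_1}$ by applying the partial transpose $t_1$ (or $t_2$) to these relations. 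Since the transpose reverses the order of a product only within a single tensor slot, one must track carefully how $t_1$ and $t_2$ interact with $R$ and turn it into $R^{t_1}$; the crossing relation $RL^+_1L^-_2 = L^-_2 L^+_1 R$ is the one that couples the $L^-$ and $(L^+)^t$ factors. After these substitutions both sides of the reflection equation should collapse to the same expression; the bookkeeping of which $R$ becomes $R^t$ and in which tensor slot is where errors creep in, so this is the computational heart of the first half.

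For injectivity, I would exhibit that the images $\widetilde s_{ij} = \sum_k l^-_{ik}(l^+)^t_{kj} = \sum_k l^-_{ik} l^+_{jk}$ are algebraically independent enough to recover a triangular leading-term structure. Concretely, using the vanishing conditions $l^-_{ij}=l^+_{ji}=0$ for $i<j$ together with $l^\pm_{ii}l^\mp_{ii}=1$, the off-diagonal entry $\widetilde s_{ij}$ with $i>j$ has a distinguished leading term $l^-_{ii} l^+_{jj} + (\text{lower})$, which lets one invert the map on generators modulo lower-order corrections. Combined with the PBW basis of $\mathrm{U}_q(\gl_N)$ in the $l^\pm_{ij}$, this gives injectivity.

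\emph{The main obstacle} is the basis statement, which I expect to be the hardest part and which cannot be obtained purely from injectivity. The spanning direction follows from an induction using the defining reflection relations: each relation, read as a straightening rule, expresses a product $s_{ia}s_{jb}$ that is out of the prescribed order as a linear combination of ordered monomials plus strictly lower terms in a suitable filtration (by total degree and by a lexicographic order on the index pairs $(i,j)$). One must check that every inversion can be straightened and that the process terminates, which is exactly what the explicit three-line reflection relation is arranged to guarantee. For linear independence, the cleanest route is to pass to the associated graded algebra: I would introduce a filtration on $\mathrm{U}_q^{\tw}(\mathfrak o_N)$ under which the relations degenerate to those of a classical (commutative or $q$-commutative) polynomial-type algebra on the $s_{ij}$ with $i>j$, whose ordered monomials are manifestly independent; lifting back shows the stated monomials are independent in $\mathrm{U}_q^{\tw}(\mathfrak o_N)$. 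The delicate point is choosing the filtration so that the top-degree parts of the defining relations are consistent and produce an associated graded algebra of the expected size—this is where one genuinely uses that $q$ is not a root of unity.
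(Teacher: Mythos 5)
First, note that the paper itself does not prove this statement: it is quoted verbatim from \cite{MRS} (Molev--Ragoucy--Sorba), so there is no in-paper argument to match yours against. Judged against the standard argument in that reference, your first half (verifying that $\widetilde S=L^-(L^+)^t$ satisfies the reflection equation by transposing the RTT relations and using the mixed relation $RL^+_1L^-_2=L^-_2L^+_1R$) is the right computation and is essentially what is done there.

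There are, however, two concrete problems in your second half. First, the leading term you assign to $\widetilde s_{ij}$ is wrong: since $l^-$ is lower triangular and $l^+$ is upper triangular, for $i>j$ one has $\widetilde s_{ij}=\sum_{k=j}^{i}l^-_{ik}l^+_{jk}$, and $l^-_{ii}l^+_{jj}$ is not a summand of this sum (no single $k$ produces it). The terms actually available for a triangularity argument are $l^-_{ij}l^+_{jj}$ (from $k=j$) and $l^-_{ii}l^+_{ji}$ (from $k=i$), each of degree one in the off-diagonal generators; the whole leading-term bookkeeping has to be redone with these. Second, and more seriously, your route to linear independence is circular: $\mathrm{U}_q^{\tw}(\mathfrak{o}_N)$ is defined by generators and relations, so one cannot establish that the ordered monomials are linearly independent by ``passing to the associated graded'' --- without an a priori lower bound on the size of the algebra, the associated graded could collapse, and nothing in the presentation alone rules that out (this is exactly the content one is trying to prove). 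The standard, and essentially unavoidable, argument is to run spanning and independence through the map itself: the straightening shows the ordered monomials span; one then shows their \emph{images} under $S\mapsto L^-(L^+)^t$ are linearly independent in $\mathrm{U}_q(\gl_N)$ by comparing leading terms against the PBW basis of $\mathrm{U}_q(\gl_N)$ in the $l^{\pm}_{ij}$. That single computation simultaneously yields the basis statement and the injectivity of the map, so your attempt to prove injectivity separately by ``inverting the map on generators'' is both insufficient (recovering generators does not give injectivity of an algebra map) and unnecessary. The appeal to $q$ not being a root of unity in the graded argument is also a red herring; the PBW comparison works for generic $q$ without it.
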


Regarding  $\mathrm{U}_{q}^{\tw}(\mathfrak{o}_{N})$ as a subalgebra of $\mathrm{U}_{q} (\gl_{N})$,
we introduce another matrix $\overline{S}$ by
 \begin{equation}
  \overline{S}=L^+{L^-}^t,
  \end{equation}
then
\begin{equation}
  \overline{S}=1-q+qS^{t}.
  \end{equation}
In terms of the matrix elements, $\overline{s}_{ij}=qs_{ji}$ for $i<j$.
So, the elements $\overline{s}_{ij}$ belong to the subalgebra $\mathrm{U}_{q}^{\tw}(\mathfrak{o}_{N})$.

The subalgebra $\mathrm{U}_{q}^{\tw}(\mathfrak{o}_{N})$ is a coideal of $\mathrm{U}_q(\mathfrak{gl}_N)$, as the
image of the generator $s_{ij}$ under the coproduct is given by
\begin{equation}
\Delta (s_{ij})=\sum_{k,l=1}^{N}l^-_{ik}l^+_{jl}\otimes s_{kl}.
\end{equation}

\subsection{Symplectic case}

Let $G$ be the $2n\times 2n$ matrix $G$ defined by
 \begin{equation}
G=
q\sum_{k=1}^{n}E_{2k-1,2k}-\sum_{k=1}^{n}E_{2k,2k-1}.
 \end{equation}

For $1\leq k\leq n$, we denote
\begin{equation}
 (2k-1)'=2k, \qquad  (2k)'=2k-1.
\end{equation}

The {\it twisted quantum enveloping algebra}
$\mathrm{U}_{q}^{\tw}(\mathfrak{sp}_{N})$
is an unital algebra
generated by $s_{ij},1\leq i,j\leq N$ and the elements  $s_{ii'}^{-1}, i=1,3, \ldots, 2n-1$,
 subject to the reflection relation
 \begin{equation}\label{reflection rel}
 RS_1R^{t}S_2=S_2 R^{t}S_1R,
\end{equation}
and in addition the following relations
\begin{align} \label{q-sp}
&s_{ij}=0  \text{ for }  i<j  \text{ with }  j\neq i', \\
&s_{ii'}s_{ii'}^{-1}=s_{ii'}^{-1}s_{ii'}=1,\ i=1,3,\ \ldots,\ 2n-1, \\
&s_{i'i'}s_{ii}-q^{2}s_{i'i}s_{ii'}=q^{3},\ i=1,3,\ \ldots,\ 2n-1.
 \end{align}

 We still use the symbol $s_{ij}$ for the generators of $\mathrm{U}_{q}^{\tw}(\mathfrak{sp}_{N})$ since they
 also satisfy the reflection equation \eqref{reflection rel} in both cases. It will be clear from the context whether they
 are the generators of the orthogonal algebra or the symplectic algebra.

\begin{theorem}\cite{MRS}\label{embedding thm sp1}
The map $S\to L^-G(L^+)^{t}$  defines
an algebra embedding of
$\mathrm{U}_{q}^{\tw}(\g_{N}) \longrightarrow \mathrm{U}_{q} (\gl_{N}) $.
The monomials
\begin{equation}
\vec{\prod\limits_{i=1,3,\ldots,2n-1}}s_{i1}^{k_{i1}}s_{i2}^{k_{i2}}\cdots s_{ii'}^{k_{ii'}},s_{i'i'}^{k_{i'i'}},s_{i'1}^{k_{i'1}}\cdots s_{i,i-2}^{k_{i',i'-2}}
\end{equation}
form a basis of  the algebra $\mathrm{U}_{q}^{\tw}(\mathfrak{sp}_{N})$,
where   $k_{i'i'}$ with $i=1,3, \ldots, 2n-1$   are arbitrary integers and the remaining exponents  $k_{ij}$ are any nonnegative integers.
\end{theorem}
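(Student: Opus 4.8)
The plan is to follow the pattern of the orthogonal case (Theorem~\ref{embedding thm orth1}), now inserting the fixed matrix $G$, and to split the argument into two independent tasks: first, that $S\mapsto L^-G(L^+)^{t}$ respects all defining relations of $\mathrm{U}_{q}^{\tw}(\mathfrak{sp}_{N})$, so that it is a genuine algebra homomorphism; and second, that the listed ordered monomials form a basis, from which injectivity will follow as a by-product.

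For well-definedness the central point is the reflection equation~\eqref{reflection rel}. I would verify it by a Sklyanin-type factorization: treating $L^-$ and $(L^+)^{t}$ as the two ``dressing'' factors and $G$ as a constant reflection matrix, the equation $RS_{1}R^{t}S_{2}=S_{2}R^{t}S_{1}R$ for $S=L^-G(L^+)^{t}$ reduces, after repeatedly commuting the $L^{\pm}$-factors through the $R$-matrices, to a single numerical identity for $G$ of the form $R\,G_{1}R^{t_{1}}G_{2}=G_{2}R^{t_{1}}G_{1}\,R$. The commuting steps use $RL_{1}^{\pm}L_{2}^{\pm}=L_{2}^{\pm}L_{1}^{\pm}R$ and the mixed relation $RL_{1}^{+}L_{2}^{-}=L_{2}^{-}L_{1}^{+}R$ from~\eqref{RTT relation}, applied in their partial-transpose form to accommodate $(L^+)^{t}$ and $R^{t_{1}}$; the constant identity for $G$ is then checked directly from the explicit expressions for $R$, $R^{t_{1}}$ and $G$. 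The remaining relations in~\eqref{q-sp} are verified by explicit matrix computation: writing $s_{ij}=\sum_{a}l^{-}_{ia}G_{a,a'}l^{+}_{j,a'}$ and using that both $L^-$ and $(L^+)^{t}$ are lower triangular, the anti-diagonal block structure of $G$ forces $s_{ij}=0$ whenever $i<j$ and $j\neq i'$, produces the invertibility of each $s_{ii'}$ from that of the diagonal entries $l^{\pm}_{ii}$, and yields the quadratic relation $s_{i'i'}s_{ii}-q^{2}s_{i'i}s_{ii'}=q^{3}$ as a $q$-determinant identity for the $2\times2$ block indexed by $\{i,i'\}$.

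For the basis statement I would argue in two directions. The listed monomials span $\mathrm{U}_{q}^{\tw}(\mathfrak{sp}_{N})$: using the reflection relation together with the explicit relations~\eqref{q-sp}, any product of generators can be reordered into the prescribed normal form, the two-sided integer exponents $k_{i'i'}$ reflecting the presence of the inverses $s_{ii'}^{-1}$. For linear independence I would pass to the ambient algebra: since the map is already known to be a homomorphism, it suffices to prove that the images of the ordered monomials under $S\mapsto L^-G(L^+)^{t}$ are linearly independent in $\mathrm{U}_{q}(\gl_{N})$. Introducing a filtration on $\mathrm{U}_{q}(\gl_{N})$ adapted to its PBW basis in the generators $l^{\pm}_{ij}$, each ordered monomial in the $s_{ij}$ acquires a well-defined leading term, and distinct monomials yield distinct leading terms; linear independence of the images then follows, and with it both the injectivity of the embedding and the fact that the monomials form a basis.

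The main obstacle I expect is the reflection-equation verification. The difficulty is bookkeeping: moving $(L^+)^{t}$ past $L^-$ requires the partial transpose of the mixed RTT relation, and the interaction of $R^{t_{1}}$ with $G$ generates several $(q-q^{-1})$-corrections that must cancel so as to leave exactly the constant identity $R\,G_{1}R^{t_{1}}G_{2}=G_{2}R^{t_{1}}G_{1}\,R$; keeping the transpositions and the index constraints ($i<j$, $j=i'$) consistent is the delicate part. A secondary subtlety lies in the leading-term analysis behind linear independence, where the invertible generators $s_{ii'}^{-1}$ and the two-sided exponents $k_{i'i'}$ force the filtration to be arranged so that negative powers are treated on the same footing as the nonnegative ones.
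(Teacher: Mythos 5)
The paper does not actually prove this theorem: it is quoted from \cite{MRS} with a citation and no argument, so there is no internal proof to compare your proposal against. Judged on its own terms, your outline follows the standard route, and it is the same strategy the paper itself deploys for the analogous affine statement: in the proof of the mixed reflection relation $R(u,v)\overline S_{1}(u)R^{t}(u^{-1},v)S_{2}(v)=S_{2}(v)R^{t}(u^{-1},v)\overline S_{1}(u)R(u,v)$ in the Liouville-formula section, the authors commute the $L^{\pm}$-factors through the $R$-matrices via the partially transposed RTT relations and invoke exactly the constant identity you isolate (``the matrix $G$ satisfies the reflection relation''). Your verification of the remaining relations in \eqref{q-sp} from the triangularity of $L^{-}$ and $(L^{+})^{t}$ against the antidiagonal $G$ is also sound: one finds $s_{ii'}=q\,l^{-}_{ii}l^{+}_{i'i'}$, which is invertible, and $s_{ij}=0$ for $i<j$ with $j\neq i'$.

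The one place where the proposal is genuinely thin is the linear-independence half of the basis claim. The sentence ``each ordered monomial acquires a well-defined leading term, and distinct monomials yield distinct leading terms'' is the entire content of the PBW statement and cannot be left as an assertion: the images $s_{ij}$ are quadratic in the $l^{\pm}_{ab}$, different generators share $l^{\pm}$-factors (for instance $s_{ii}=q\,l^{-}_{ii}l^{+}_{ii'}$ while $s_{i'i}=q\,l^{-}_{i'i}l^{+}_{ii'}-l^{-}_{i'i'}l^{+}_{ii}$), and the two-sided exponents $k_{i'i'}$ mean you are really arguing in a localization, so the filtration must be compatible with inverting the $s_{ii'}$. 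In \cite{MRS} this step is handled not by a naive leading-term count but by passing to an associated graded algebra (equivalently the specialization $q\to 1$), where the images degenerate to a commutative polynomial algebra, Laurent in the $s_{ii'}$, whose monomials are visibly independent. If you intend the direct filtration argument you must specify the order on PBW monomials in the $l^{\pm}_{ij}$ and prove injectivity of the leading-term map on your normal forms; as written this is a gap, although the overall architecture --- homomorphism via the constant reflection equation for $G$, spanning via straightening with the reflection relation, independence checked in the ambient algebra --- is the correct one.
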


Regarding $\mathrm{U}_{q}^{\tw}(\mathfrak{sp}_{N})$ as subalgebra of $\mathrm{U}_{q} (\gl_{N})$,
 we introduce another matrix $\overline{S}$ by
 \begin{equation}
  \overline{S}=L^+G{L^-}^t.
  \end{equation}

It is easy to see the following relations between the matrix elements of $S$ and $\overline{S}$: for any $i=1,$ 3, $\ldots, 2n-1$
\begin{align}
  &\overline{s}_{ii}=-q^{-2}s_{ii},\ \overline{s}_{i'i'}=-q^{-2}s_{i'i'},\\
 & \overline{s}_{i'i}=-q^{-1}s_{ii'},\ \overline{s}_{ii'}=-q^{-1}s_{i'i}+(1-q^{-2})s_{ii'},
\end{align}
and for any $ i<j,\ j\neq i'$, %while for the remaining generators we have
\begin{equation}
  \overline{s}_{ij}=-q^{-1}s_{ji}.% \ i<j,\ j\neq i'
\end{equation}
Thus, the elements $\overline{s}_{ij}$ belong to the subalgebra $\mathrm{U}_{q}^{\tw}(\mathfrak{sp}_{N})$.

The algebra $\mathrm{U}_{q}^{\tw} (\mathfrak{sp}_{N})$ is another coideal of $\mathrm{U}_{q}(\mathfrak{gl}_{N})$,
as
%the image of the generators under
the coproduct of the generators are given by %is given by
\begin{align}
&\Delta (s_{ij})=\sum_{k,l=1}^{N}l^-_{ik}l^+_{jl}\otimes s_{kl},\\
& \Delta (s_{ii'}^{-1})= l^-_{i'i'}l^+_{ii}\otimes s_{ii'}^{-1}.
\end{align}

\section{Quantum affine algebra $\mathrm{U}_q(\widehat{\gl}_N)$}

The $R$-matrix $R$ is invertible. Let $P=\sum_{ij}E_{ij}\otimes E_{ji}$, then $PR^{-1}P$ becomes another $R$-matrix, explicitly
\begin{equation}
  PR^{-1}P=\qin\sum_{i}E_{ii}\ot E_{ii}+\sum_{i\ne j}E_{ii}\ot E_{jj}
-(q-\qin)\sum_{i> j}E_{ij}\ot E_{ji}.
 \end{equation}
For any two variables $u, v$, we introduce the $R$-matrix $R(u,v)=uPR^{-1}P-v R$:
\begin{equation}
  \begin{split}
    R(u,v)=
(u-v)\sum_{i\neq j}E_{ii}\otimes E_{jj}+(q^{-1}u-qv)\sum_{i}E_{ii}\otimes E_{ii}\
 \\
 +(q^{-1}-q)u\displaystyle \sum_{i>j}E_{ij}\otimes E_{ji}+(q^{-1}-q)v\sum_{i<j}E_{ij}\otimes E_{ji},
 \end{split}
  \end{equation}
which satisfies the spectral parameter dependent Yang-Baxter equation
 \begin{equation}\label{YBE}
  \begin{split}
 R_{12}(u, v)R_{13}(u, w)R_{23}(v, w)=R_{23}(v, w)R_{13}(u, w)R_{12}(u, v)
 \end{split}
  \end{equation}
where both sides take values in $\mathrm{End}(\mathbb{C}^{N})\otimes \mathrm{End}(\mathbb{C}^{N}) \otimes \mathrm{End}(\mathbb{C}^{N})$ and
the subindices indicate the copies of End $\mathbb{C}^{N}$, e.g. $R_{12}(u,v)=R(u,v)\otimes 1$ etc.

The matrix $R(u,v) $ is invertible and
\begin{equation}\label{inverse of R}
  R(u,v)R'(u,v)=(qu-q^{-1}v)(q^{-1}u-qv)1\ot 1.
\end{equation}
where $R'(u,v)$ is obtained from $R(u,v)$ by replacing $q$ with  $q^{-1}$.

We also need the normalized $R$ matrix
\begin{equation}
  \wt R(x)=\frac{R(x,1)}{q^{-1}x-q}, \qquad
  R(x)= f(x)\wt R(x)
\end{equation}
while the formal power series
\begin{equation}
  f(x)=1+\sum_{k=1}^{\infty}f_kx^k,\qquad f_k=f_k(q),
\end{equation}
is uniquely determined
by the relation
\begin{equation}\label{fx relation}
 f(xq^{2N})=f(x)  \frac{(1-xq^2)  (1-xq^{2N-2})}{(1-x)  (1-xq^{2N})}.
\end{equation}

The  $R$-matrix  $R(x)$ satisfies the following {\it crossing symmetry relations} \cite{FR}:
\begin{equation}\label{cross rel}
  \begin{split}
  R_{12}^{-1}(x)^{t_2}D_2R_{12}^{t_2}(xq^{2N})=D_2,\qquad
  R_{12}^{t_1}(xq^{2N}) D_1 R_{12}^{-1}(x)^{t_1}=D_1,
 \end{split}
  \end{equation}
  where $D$ is the diagonal matrix $\diag(q^{N-1},q^{N-3},\cdots,q^{1-N})$.

The {\it quantum affine algebra} $\mathrm{U}_q(\widehat{\gl}_N)$
 is generated by ${l^{\pm}_{ij}}^{(r)}$  where $1\leq i, j\leq N$ and $r$ runs over nonnegative integers. Let $L^{\pm}(u)=(l_{ij}^{\pm}(u))$ be the matrix
 \begin{equation}
  \begin{split}
L^{\pm}(u)= \sum_{i,j=1}^{N}l^{\pm}_{ij}(u)\otimes E_{ij},
 \end{split}
  \end{equation}
where $l^{\pm}_{ij}(u)$  are formal series in $u^{\pm1}$ respectively,
 \begin{equation}
  \begin{split}
 l^{\pm}_{ij}(u)= \sum_{r=0}^{\infty} {l^{\pm}_{ij}}^{(r)}u^{\pm r}.
 \end{split}
  \end{equation}
The defining relations are
 \begin{equation}\label{e:defn}
  \begin{split}
{l^-_{ij}}^{(0)}={l^+_{ji}}^{(0)}=0,\ 1\leq i<j\leq N,  \\
l{^-_{ii}}^{(0)} {l^+_{ii}}^{(0)}={l^+_{ii}}^{(0)}{l^-_{ii}}^{(0)}=1,\ 1\leq i\leq N, \\
R(u/v)L^{\pm}_{1} (u) L^{\pm}_{2} (v)= L^{\pm}_{2} (v) L^{\pm}_{1} (u)R(u/v),\\
R(u q^{-c} /v) L^{+}_{1} (u) L^{-}_{2} (v)= L^{-}_{2} (v) L^{+}_{1} (u)R(u q^c  /v).\\
 \end{split}
  \end{equation}

The quantum enveloping algebra $\mathrm{U}_{q}(\gl_{N})$ is a natural  subalgebra of $\mathrm{U}_q(\widehat{\gl}_N)$ defined by the embedding
 \begin{equation}
  \begin{split}
l^{\pm}_{ij}\mapsto {l^{\pm}_{ij}}^{(0)}.
 \end{split}
  \end{equation}
Moreover, there is an algebra homomorphism $\mathrm{U}_q(\widehat{\gl}_N)\rightarrow \mathrm{U}_{q}(\gl_{N})$ called the {\it evaluation homomorphism} defined by
\begin{equation}
  \begin{split}
\quad L^+(u)\mapsto L^+-uL^{-},L^-(u)\mapsto L^--u^{-1} L^+ , q^c\mapsto 1.
 \end{split}
\end{equation}

\subsection{Quantum (affine) determinants}

Consider the  tensor product $\mathrm{U}_{q}(\widehat{\gl}_{N}) \otimes(\mathrm{End}\mathbb{C}^{N})^{\otimes m}$, we have the following relation:
%the $RTT$ relation and the Yang-Baxter equation imply that
 \begin{align}\notag
& R(u_{1},  \ldots,  u_{m})L^{\pm}_{1} (u_{1})\cdots L^{\pm}_{m}(u_{m})\\
& \qquad =L^{\pm}_{m}(u_{m})\cdots  L^{\pm}_{1} (u_{1})R(u_{1}, \ldots, u_{m})
 \end{align}
where
 \begin{equation}
 R(u_{1}, \ \ldots,\ u_{m})=\overset{\rightarrow}{\prod}_{1\leq i<j\leq m}R_{ij}(u_{i},u_{j}),
 \end{equation}
and the product is taken in the lexicographical order on the pairs $(i, j)$.

Consider the $q$-permutation operator $P^{q}\in \mathrm{End}(\mathbb{C}^{N}\otimes \mathbb{C}^{N})$ defined by
\begin{equation}
P^{q}= \sum_{i}E_{ii}\otimes E_{ii}+q\sum_{i>j}E_{ij}\otimes E_{ji}+q^{-1}\sum_{i<j}E_{ij}\otimes E_{ji}.
 \end{equation}
The symmetric group $\mathfrak{S}_{m}$ acts on the space $(\mathbb{C}^{N})^{\otimes m}$ via
$s_{i}\mapsto P_{s_{i}}^{q} :=P_{i,i+1}^{q}$ far $i=1, \ldots, m-1$, where $s_{i}$ denotes
the transposition $(i,\ i+1)$ . If $\sigma=s_{i_{1}}\cdots s_{i_{l}}$ is a reduced decomposition of
an element $\sigma\in \mathfrak{S}_{m}$ we set $P_{\sigma}^{q}= P_{s_{i_{1}}}^{q}\cdots P_{s_{i_{l}}}^{q}$.
The $q$-antisymmetrizer is then defined by
\begin{equation}
A_{m}^q =\frac{1}{m!} \sum_{\sigma\in \mathfrak{S}_{m}}\mathrm{sgn}\sigma\cdot P_{\sigma}^{q}.
 \end{equation}
It is known that the $q$-antisymmetrizer can be rewritten in $(\mathrm{End}\mathbb{C}^{N})^{\otimes m}$
\begin{equation}\label{e:q-anti}
R(1,  q^{-2},  \ldots,  q^{-2m+2})=m!\prod_{0\leq i<j\leq m-1}(q^{-2i}-q^{-2j})A_{m}^q .
 \end{equation}

Using the commutation relation \eqref{e:defn} and the Yang-Baxter equation \eqref{YBE} we have that
 \begin{equation}
A_{m}^q L^{\pm}_{1} (u)\cdots L^{\pm}_{m}(q^{-2m+2}u)=L^{\pm}_{m}(q^{-2m+2}u)\cdots L^{\pm}_{1} (u)A_{m}^q.
 \end{equation}
It can be rewritten as
\begin{equation}
  \frac{1}{m!}\sum_{i_{k},j_{k}}{l^{\pm}}_{j_{1} \cdots   j_{m}}^{i_{1} \cdots i_{m}}(u)\otimes E_{i_{1}j_{1}}\otimes\cdots\otimes E_{i_{m}j_{m}}.
\end{equation}
We define the {\it quantum minors for the quantum affine algebra} as the coefficients ${l^{\pm}}_{j_{1} \cdots   j_{m}}^{i_{1} \cdots i_{m}}(u)\in\mathrm{U}_{q}(\gl_N)[[u^{-1}]]$. They generalize the quantum minors of the quantum general linear algebra introduced by
Faddev, Reshetikhin and Tacktajan \cite{FRT}.

Due to the $q$-antisymmetrizer, the quantum minor are given by the following formula. If $i_{1}<\cdots<i_{m}$, then
\begin{equation}
  {l^{\pm}}_{j_{1} \cdots  j_{m}}^{i_{1} \cdots i_{m}}(u)=
  \sum_{\sigma\in \mathfrak{S}_{m}}(-q)^{-l(\sigma)}\cdot {l^{\pm}}_{i_{\sigma(1)}j_{1}}(u)\cdots {l^{\pm}}_{i_{\sigma(m)}j_{m}}(q^{-2m+2}u).
\end{equation}
Subsequently for any $\tau\in \mathfrak{S}_{m}$ we have
\begin{equation}
  {l^{\pm}}_{j_{ 1 } \cdots   j_{ m}}^{i_{\tau(1)} \cdots i_{\tau(m)}}(u)
=(-q)^{l(\tau)}{l^{\pm}}_{j_{1} \cdots  j_{m}}^{i_{1} \cdots i_{m}}(u),
\end{equation}
where $l(\tau)$ denotes the length of the permutation $\tau$. If $j_{1}<\cdots<j_{m}$ (and the $i_{r}$ are arbitrary) then
\begin{equation}
  {l^{\pm}}_{j_{1} \cdots j_{m}}^{i_{1} \cdots i_{m}}(u)= \sum_{\sigma\in \mathfrak{S}_{m}}(-q)^{l(\sigma)}
  \cdot {l^{\pm}}_{i_{m}j_{\sigma(m)}}(q^{-2m+2}u)\cdots {l^{\pm}}_{i_{1}j_{\sigma(1)}}(u)
\end{equation}
and for any $\tau\in \mathfrak{S}_{m}$ we have
\begin{equation}
  {l^{\pm}}_{j_{\tau(1)} \cdots  j_{\tau(m)}}^{i_{1} \cdots i_{m}}(u)
=(-q)^{-l(\tau)}{l^{\pm}}_{j_{1} \cdots  j_{m}}^{i_{1} \cdots i_{m}}(u).
\end{equation}

The {\it quantum determinant} of the matrices ${L^{\pm}} (u) $ are defined as the $N$-minor:
\begin{equation}
  {\det}_q {L^{\pm}} (u)={l^{\pm}}^{1\cdots N}_{1\cdots N}(u),
\end{equation}
which satisfies the following relation:
\begin{equation}
\begin{aligned}
  A_{N}^q L^{\pm}_{1} (u)\cdots L^{\pm}_{N}(q^{2-2N}u)&=L^{\pm}_{N}(q^{2-2N}u)\cdots L^{\pm}_{1} (u)A_{N}^{q}\\
  &={\det}_q {L^{\pm}} (u)A_{N}^{q}.
   \end{aligned}
\end{equation}
%Write
%\begin{equation}
%  {\det}_q {L^{\pm}} (u)=\sum_{k=0}^{\infty}d^{\pm}_{k}u^{\pm k},
%\end{equation}
%it is well known that
%the coefficients $d^{\pm}_{k}$ belong to the center of the algebra
%$\mathrm{U}_{q}(\widehat{\gl}_{N})$ . Note that $d_{0}\overline{d}_{0}=1$.

Suppose that $I=\{i_1<i_2<\cdots<i_m\}$, $J=\{j_1<j_2<\cdots<j_m\}$,
we denote ${l^{\pm}}_{j_{1} \cdots   j_{m}}^{i_{1} \cdots i_{m}}(u)$ by ${\det}_q\left(L^{\pm}(u)_J^{I}\right)$.
If $I=J$, we denote ${\det}_q\left(L^{\pm}(u)_I\right)={\det}_q\left(L^{\pm}(u)_I^{I}\right)$.

We define the comatrix $\widehat L^{\pm}(u) $  by
\begin{equation}
  \begin{split}
    \widehat L^{\pm}(u) L^{\pm}(q^{2-2N}u)={\det}_{q}(L^{\pm}(u))I.\\
  \end{split}
\end{equation}

\begin{proposition}\label{qdet comatrix}
The matrix elements  of $\widehat L^{\pm}_{ij}(u)$ are given by
\begin{equation}\label{comatrix formula}
  \begin{split}
    \widehat L^{\pm}_{ij}(u)=(-q)^{j-i} {l^{\pm}}^{1,\cdots,\hat j\cdots,N}_{1,\cdot \hat i,\cdots ,N}(u).
  \end{split}
\end{equation}
Moreover, we have the relations
\begin{equation}
  \begin{split}
  { L^{\pm}} (u) ^t  D   {\widehat L^{\pm}}(q^{-2}u)^t D^{-1}={\det}_q(L^{\pm}(u))I.
  \end{split}
\end{equation}
\end{proposition}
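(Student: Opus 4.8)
The plan is to establish the two assertions in turn: first to identify the entries of the comatrix $\widehat L^{\pm}(u)$ with the signed quantum cofactors, and then to derive the right-handed relation from the defining (left-handed) one together with the crossing symmetry \eqref{cross rel}.

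For the first assertion I would read the defining relation $\widehat L^{\pm}(u)L^{\pm}(q^{2-2N}u)={\det}_q(L^{\pm}(u))I$ in components as
\[
\sum_{k=1}^{N}\widehat L^{\pm}_{ik}(u)\,l^{\pm}_{kj}(q^{2-2N}u)={\det}_q(L^{\pm}(u))\,\delta_{ij}.
\]
Since ${l^{+}_{ji}}^{(0)}=0$ for $i<j$ and the diagonal entries ${l^{\pm}_{ii}}^{(0)}$ are invertible, the matrices $L^{\pm}(v)$ have invertible leading coefficients and hence are invertible over the relevant completions; therefore the displayed relation determines $\widehat L^{\pm}(u)$ uniquely, and it suffices to check that the proposed entries $(-q)^{j-i}{l^{\pm}}^{1,\cdots,\hat j,\cdots,N}_{1,\cdots,\hat i,\cdots,N}(u)$ satisfy it. Written out, this is precisely the quantum Laplace expansion of ${\det}_q L^{\pm}(u)$ along a column: for $i=j$ it is the cofactor expansion, and for $i\neq j$ it is the vanishing of the expansion attached to a repeated column.

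To prove this Laplace expansion I would work directly with the fundamental relation $A_N^q L^{\pm}_1(u)\cdots L^{\pm}_N(q^{2-2N}u)={\det}_q(L^{\pm}(u))A_N^q$. Using the recursion that writes $A_N^q$ through the antisymmetrizer $A_{N-1}^q$ on the first $N-1$ tensor factors together with a sum of $q$-transpositions, I would separate the last factor $L^{\pm}_N(q^{2-2N}u)$ — which supplies the entry $l^{\pm}_{kj}(q^{2-2N}u)$ — and let $A_{N-1}^q$ act on the remaining product $L^{\pm}_1(u)\cdots L^{\pm}_{N-1}(q^{4-2N}u)$ to produce the $(N-1)$-minors ${l^{\pm}}^{1,\cdots,\hat k,\cdots,N}_{1,\cdots,\hat i,\cdots,N}(u)$ at parameter $u$; the signs $(-q)^{k-i}$ appear as the $q$-weights generated by these transpositions. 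Extracting the coefficient of $E_{i_1j_1}\otimes\cdots\otimes E_{i_Nj_N}$ then yields the expansion. The delicate point here is the bookkeeping of the $q$-powers and of the spectral shifts $u,q^{-2}u,\dots,q^{2-2N}u$ against the ordering conventions fixed in the definition of the minors.

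For the second assertion I would transpose the component identity just obtained: this shows the stated relation is equivalent to $D^{-1}\widehat L^{\pm}(q^{-2}u)D\,L^{\pm}(u)={\det}_q(L^{\pm}(u))I$, i.e. to a second Laplace expansion in which the $(N-1)$-minors are evaluated at $q^{-2}u$ and the remaining entry at $u$. I would obtain this by transporting the first-assertion identity through the crossing relations \eqref{cross rel}, which encode exactly the invariance of the relevant $R$-matrices under the partial transpose combined with conjugation by $D=\diag(q^{N-1},\dots,q^{1-N})$ and the spectral shift $x\mapsto xq^{2N}$; alternatively one may rerun the antisymmetrizer argument using the opposite ordering $L^{\pm}_N(q^{2-2N}u)\cdots L^{\pm}_1(u)A_N^q={\det}_q(L^{\pm}(u))A_N^q$. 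I expect the main obstacle to be matching the two spectral normalizations: one must verify that the shift $q^{2N}$ of the crossing symmetry combines with the internal shifts $q^{2-2N}$ of the determinant relation to reproduce exactly the argument $q^{-2}u$ and the conjugation by $D$ appearing in the statement.
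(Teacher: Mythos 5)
Your treatment of the first assertion is essentially the paper's own argument: both rest on the fundamental relation $A_N^q L^{\pm}_1(u)\cdots L^{\pm}_N(q^{2-2N}u)=\det_q(L^{\pm}(u))A_N^q$, isolate the last tensor factor (the paper by multiplying on the right by $L^{\pm}_N(q^{2-2N}u)^{-1}$, you by the recursion of $A_N^q$ through $A_{N-1}^q$ — the same computation), and read off the cofactor formula by extracting the coefficient of $e_1\otimes\cdots\otimes e_N$ against a suitable test vector. Your observation that invertibility of the leading coefficients makes $\widehat L^{\pm}(u)$ unique, so that it suffices to verify the column Laplace expansion, is a correct and slightly cleaner packaging of the same content.

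The second assertion is where you diverge from the paper, and your primary route has a gap. The crossing relations \eqref{cross rel} do let you show (as in the paper's later Proposition~\ref{center z relation}) that $L^{\pm}(u)^t\,D\,(L^{\pm}(q^{-2N}u)^{-1})^t\,D^{-1}$ is a central series times the identity, and hence that $L^{\pm}(u)^t D\widehat L^{\pm}(q^{-2}u)^t D^{-1}=w(u)I$ for some series $w(u)$; but they do not by themselves identify $w(u)$ with $\det_q(L^{\pm}(u))$. The natural ways to pin down that scalar — taking the trace and invoking $z^{\pm}(u)^{-1}=\det_q(L^{\pm}(u))/\det_q(L^{\pm}(q^{-2}u))$ — are exactly what the paper \emph{derives from} this proposition in Theorem~\ref{qdet z}, so you would be arguing in a circle unless you supply an independent normalization (e.g.\ by computing one matrix entry from the triangular leading terms). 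Note also that the second relation is not the formal transpose of the first: the entry $l^{\pm}_{ki}(u)$ sits to the \emph{left} of the $(N-1)$-minor and the spectral parameters are distributed differently, so it is a genuinely independent Laplace expansion. The paper proves it directly by inserting the partial antisymmetrizer via $A_N^q=A_N^qA^q_{\{2,\ldots,N\}}$, so that $A^q_{\{2,\ldots,N\}}L^{\pm}_2(q^{-2}u)\cdots L^{\pm}_N(q^{2-2N}u)$ produces the $(N-1)$-minors at parameter $q^{-2}u$ while $L^{\pm}_1(u)$ stays in front; this is the precise implementation of your vaguer fallback suggestion of ``rerunning the antisymmetrizer argument with the opposite ordering,'' and is the step you would need to make explicit to close the argument.
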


\begin{proof}
Multiplying $ \left( L^{\pm}_N(q^{2-2N}u)\right)^{-1}$ from the right of the formulas
\begin{equation}
A_N^q L^{\pm}_1(u)L^{\pm}_2(q^{-2}u)\dots L^{\pm}_N(q^{2-2N}u)
=A_N^q {\det}_q(L^{\pm}(u)).
\end{equation}
We get that
\begin{equation}
  A_N^q L^{\pm}_1(u)L^{\pm}_2(q^{-2}u)\dots L^{\pm}_{N-1}(q^{4-2N}u)=A_N^q \widehat L^{\pm}_{N}(u).
\end{equation}
Applying both sides to the vector
\begin{equation}
e_1\otimes\cdots\hat{e}_{i}\otimes e_{N}\otimes e_j
\end{equation}
and comparing the coefficients of
$e_1\otimes\cdots \otimes e_{N} $ we get the equation \eqref{comatrix formula}.

Denote by $A^{q}_{\{2,\ldots,N\}}$  the $q$-antisymmetrizer over the copies of $End(\mathbb{C}^N)$ labeled by $\{2,\ldots,N\}$.
Then $A_{N}^q=A_{N}^q A^{q}_{\{2,\ldots,N\}}$ and
\begin{equation}
A_N^q L^{\pm}_1(u)A^{q}_{\{2,\ldots,N\}}L^{\pm}_2(q^{-2}u)\dots L^{\pm}_N(q^{2-2N}u)
=A_N^q {\det}_q(L^{\pm}(u)).
\end{equation}
Applying both sides to the vector
\begin{equation}
e_i \otimes e_1\otimes\cdots\hat{e_{j}}\otimes e_{N},
\end{equation}
we get that
\begin{equation}
  \begin{split}
\sum_{k=1}^N (-q)^{j-k}l^{\pm}_{ki} (u) {l^{\pm}}^{1,\cdots,\hat k\cdots,N}_{1,\cdot \hat j,\cdots ,N}(q^{-2}u)=\delta_{ij}{\det}_q(L^{\pm}(u)).
 \end{split}
  \end{equation}
  It can be written as
  \begin{equation}
  \begin{split}
    {L^{\pm}}(u)^t D \left(\widehat L ^{\pm} (q^{-2}u)\right)^t D^{-1}={\det}_q(L^{\pm}(u)).
 \end{split}
  \end{equation}
\end{proof}

\subsection{Minor identities for quantum determinants}
Multiplying the $RTT$ relation  from both sides consecutively by the inverses to $R(u,v)$, $L^{\pm}_{1} (u) $ and $L^{\pm}_{2} (v)$
, then using the relation \eqref{inverse of R} we have that
\begin{equation}
  \begin{split}
R'(u/v)  L^{\pm}_{1}(u)^{-1} L^{\pm}_{2} (v)^{-1}  =L^{\pm}_{2} (v)^{-1}  L^{\pm}_{1}(u)^{-1} R'(u/v),\\
%R'(u/v)  L^{+}_{1}(u)^{-1} L^{-}_{2} (v)^{-1}  =L^{-}_{2} (v)^{-1}  L^{+}_{1}(u)^{-1} R'(u/v).
 \end{split}
  \end{equation}

For any two sets $I=\{i_1,\ldots,i_r\}$, $J=\{j_1,\ldots,j_s\}$  with
$i_1<\cdots<i_r$ and $j_1<\cdots<j_s$, we denote by  $l(I,J)$ the inversion number of the sequence $i_1,\ldots.i_r,j_1,\ldots,j_s$.
Let $A$ be any square matrix of size $N\times N$.
For any subsets $I=\{i_1,\ldots,i_k\},J=\{j_1,\ldots,j_k\}$ of  $[1,N]$,
we denote by $A^{I}_{J}$ the matrix whose
$ab$-th entry is $A_{i_a j_b}$.
For any subsets $I=\{i_1<i_2<\cdots i_{k}\}$,
we denote by $A_I$ the submatrix of $A$ with rows and columns indexed by $I$.
  The following theorem is an analog of Jacobi's ratio theorem for the quantum determinants.
  \begin{theorem}\label{Jacobi thm qdet}
  Let   $I=(i_1<\cdots<i_k)$ and $J=(j_1<\cdots<j_k)$  be two subsets of $[1,N]$ of the same cardinality, and
  $I^{c}=\{i_{k+1}<\cdots<i_{N}\}$ and $J^{c}=\{j_{k+1}<\cdots<j_{N}\}$ be their complements.
Then
\begin{align}\notag
   &(-q)^{-l(I,I^c)} {\det}_{q}(L^{\pm}(u)^I_J)\\
   &= (-q)^{-l(J,J^c)} {\det}_q(L^{\pm}(u))  {\det}_{\qin}\left(({L^{\pm}(q^{2-2N}u)^{-1}})^{J^c}_{I^c}\right).
  \end{align}
  \end{theorem}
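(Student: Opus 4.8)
\emph{Strategy.} The plan is to transport the whole statement onto the comatrix $\wh L^{\pm}(u)$ of Proposition~\ref{qdet comatrix}, where it becomes a statement about honest quantum minors of $L^{\pm}(u)$. Rewriting the defining relation of the comatrix as $L^{\pm}(q^{2-2N}u)^{-1}={\det}_q(L^{\pm}(u))^{-1}\wh L^{\pm}(u)$ and using that ${\det}_q L^{\pm}(u)$ is central (so that it commutes past every entry and may be pulled out of a minor), I would write $k=|I|=|J|$, $|I^c|=|J^c|=N-k$, and record
\begin{equation*}
{\det}_{\qin}\!\big((L^{\pm}(q^{2-2N}u)^{-1})^{J^c}_{I^c}\big)={\det}_q(L^{\pm}(u))^{-(N-k)}\,{\det}_{\qin}\!\big(\wh L^{\pm}(u)^{J^c}_{I^c}\big).
\end{equation*}
(The $\qin$-minors here are legitimate, since the relation $R'(u/v)L^{\pm}_1(u)^{-1}L^{\pm}_2(v)^{-1}=L^{\pm}_2(v)^{-1}L^{\pm}_1(u)^{-1}R'(u/v)$ derived above shows $L^{\pm}(u)^{-1}$ obeys the RTT relation with $q$ replaced by $\qin$.) Substituting this into the right-hand side of the theorem reduces the claim to the single \emph{comatrix minor identity}
\begin{equation}\label{e:comatrix-minor}
{\det}_{\qin}\!\big(\wh L^{\pm}(u)^{J^c}_{I^c}\big)=(-q)^{\,l(J,J^c)-l(I,I^c)}\,{\det}_q(L^{\pm}(u))^{\,N-k-1}\,{\det}_q\!\big(L^{\pm}(u)^{I}_{J}\big),
\end{equation}
the quantum counterpart of the classical relation $\det((\adj A)^{J^c}_{I^c})=\pm\det(A)^{N-k-1}\det(A^I_J)$.

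\emph{Reduction to principal minors.} Next I would use the covariance of quantum minors under permutations of their labels, namely ${l^{\pm}}^{i_{\tau(1)}\cdots i_{\tau(m)}}_{j_1\cdots j_m}(u)=(-q)^{l(\tau)}{l^{\pm}}^{i_1\cdots i_m}_{j_1\cdots j_m}(u)$ together with its column analogue. Applying the permutations that sort the concatenated sequences $(i_1,\dots,i_k,i_{k+1},\dots,i_N)$ and $(j_1,\dots,j_k,j_{k+1},\dots,j_N)$ into increasing order produces exactly the scalars $(-q)^{\pm l(I,I^c)}$ and $(-q)^{\pm l(J,J^c)}$, so \eqref{e:comatrix-minor} for general $I,J$ follows from the principal case $I=J=\{1,\dots,k\}$, where both inversion numbers vanish and the target reads
\begin{equation*}
{\det}_{\qin}\!\big(\wh L^{\pm}(u)_{\{k+1,\dots,N\}}\big)={\det}_q(L^{\pm}(u))^{\,N-k-1}\,{\det}_q\!\big(L^{\pm}(u)_{\{1,\dots,k\}}\big).
\end{equation*}

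\emph{The principal case.} For this I would exploit that $\wh L^{\pm}(u)$ itself obeys the $\qin$-RTT relation; this follows from $\wh L^{\pm}(u)={\det}_q(L^{\pm}(u))\,L^{\pm}(q^{2-2N}u)^{-1}$, the $R'$-relation for $L^{\pm}(u)^{-1}$, and centrality, so ${\det}_{\qin}$ of a principal block of $\wh L^{\pm}$ is a bona fide $\qin$-quantum minor. Factoring the $q$-antisymmetrizer through the Young subgroup $\mathfrak S_k\times\mathfrak S_{N-k}$ as $A^q_N=A^q_N\,(A^q_{\{1,\dots,k\}}\ot A^q_{\{k+1,\dots,N\}})$ and feeding this into the determinant relation $A^q_N L^{\pm}_1(u)\cdots L^{\pm}_N(q^{2-2N}u)={\det}_q(L^{\pm}(u))\,A^q_N$ exhibits the full determinant as the collapse of the leading $k$-block to ${\det}_q(L^{\pm}(u)_{\{1,\dots,k\}})$ against the trailing block; rewriting the trailing block through the comatrix relation yields the factor ${\det}_q(L^{\pm}(u))^{N-k-1}{\det}_{\qin}(\wh L^{\pm}(u)_{\{k+1,\dots,N\}})$. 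Reading off the coefficient of a single fixed basis vector on the two sides then gives the principal identity.

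\emph{Main obstacle.} The genuine difficulty is the bookkeeping in the last two steps: in the noncommutative setting the order of the factors $L^{\pm}(q^{-2r}u)$ and their spectral shifts must be tracked through every use of the Yang--Baxter equation \eqref{YBE} and of the antisymmetrizer factorization, and the resulting powers of $-q$ must be matched against the inversion numbers $l(I,I^c)$ and $l(J,J^c)$. Establishing \eqref{e:comatrix-minor} with the \emph{precise} exponent of $-q$, rather than merely up to an unspecified sign as in the classical case, is where the bulk of the computation lies; the centrality of ${\det}_q L^{\pm}(u)$ and the reduction to principal minors are exactly what keep this bookkeeping finite and organized.
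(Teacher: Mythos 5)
Your overall route --- transport the identity to the comatrix $\wh L^{\pm}(u)$, prove an adjugate-minor identity there, and reduce to principal minors --- is the classical strategy, but two of its steps break down in the quantum affine setting. First, the reduction to principal minors does not work: the covariance relation ${l^{\pm}}^{i_{\tau(1)}\cdots i_{\tau(m)}}_{j_1\cdots j_m}(u)=(-q)^{l(\tau)}{l^{\pm}}^{i_1\cdots i_m}_{j_1\cdots j_m}(u)$ only reorders labels \emph{within} a fixed index set; it does not relate ${\det}_q(L^{\pm}(u)^I_J)$ to ${\det}_q(L^{\pm}(u)_{\{1,\dots,k\}})$, and there is no algebra automorphism permuting the indices $1,\dots,N$ (the $R$-matrix distinguishes $i<j$ from $i>j$). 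The inversion numbers $l(I,I^c)$ and $l(J,J^c)$ do not come from sorting a $k$-minor and an $(N-k)$-minor separately; they come from the action of the full antisymmetrizer $A_N^q$ on a concatenated basis vector. Second, the factor you pull out of ${\det}_{\qin}\bigl((L^{\pm}(q^{2-2N}u)^{-1})^{J^c}_{I^c}\bigr)$ cannot be ${\det}_q(L^{\pm}(u))^{-(N-k)}$: the $N-k$ factors of that minor sit at the distinct spectral points $q^{2-2N}u,\,q^{4-2N}u,\dots,q^{-2k}u$, so each contributes ${\det}_q\bigl(L^{\pm}(q^{2j}u)\bigr)^{-1}$ at a \emph{different} argument. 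The correct statement involves the product $\prod_{j=0}^{N-k-1}{\det}_q(L^{\pm}(q^{2j}u))^{-1}$, and correspondingly the exponent $N-k-1$ in your comatrix-minor identity must become a product over shifted arguments (compare the shifted products in the paper's Sylvester theorem); these shifted series do not collapse to a power of a single one.

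The paper's proof is direct and avoids the detour entirely: right-multiply $A_N^q L^{\pm}_1(u)\cdots L^{\pm}_N(q^{2-2N}u)=A_N^q\,{\det}_q(L^{\pm}(u))$ by $\bigl(L^{\pm}_N(q^{2-2N}u)\bigr)^{-1}\cdots\bigl(L^{\pm}_{k+1}(q^{-2k}u)\bigr)^{-1}$, apply both sides to $e_{j_1}\otimes\cdots\otimes e_{j_k}\otimes e_{i_N}\otimes\cdots\otimes e_{i_{k+1}}$, and compare the coefficients of $e_1\otimes\cdots\otimes e_N$; both minors, both spectral shifts and both signs $(-q)^{-l(I,I^c)}$, $(-q)^{-l(J,J^c)}$ appear simultaneously from this single antisymmetrized identity. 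Your final ``principal case'' computation, if carried out, would essentially reproduce this argument, so even after repairing the two gaps the comatrix route would buy nothing here.
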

\begin{proof}
    Multiplying $ \left( L^{\pm}_N(q^{2-2N}u)\right)^{-1},\cdots, \left( L^{\pm}_{k+1}(q^{-2k}u)\right)^{-1}$ on the right of the formula
    \begin{equation}
    A_N^q L^{\pm}_1(u)L^{\pm}_2(q^{-2}u)\dots L^{\pm}_N(q^{2-2N}u)
    =A_N^q {\det}_q(L^{\pm}(u)),
    \end{equation}
we get that
   \begin{equation}
   \begin{split}
  &A_N^q    L^{\pm}_1(u)L^{\pm}_2(q^{-2}u)\dots L^{\pm}_k(q^{2-2k}u)\\
    &=
    {\det}_q(L^{\pm}(u))  A_N^q
    {L^{\pm}_{N}(q^{2-2N}u)}^{-1}  \cdots {L^{\pm}_{k+1}(q^{-2k}u)}^{-1}.
  \end{split}
  \end{equation}

  Applying both sides to the vector $e_{j_{1}}\otimes \dots e_{j_k}\otimes e_{i_N}\otimes\dots \otimes e_{i_{k+1}}$ and comparing the
  coefficient of $e_{1}\otimes e_{2}\otimes  \dots \otimes e_{N}$, we obtain that
  \begin{align}\notag
   &(-q)^{-l(I,I^c)} {\det}_{q}(L^{\pm}(u)^I_J)\\
   &=  (-q)^{-l(J,J^c)} {\det}_q(L^{\pm}(u))  {\det}_{\qin}\left(({L^{\pm}(q^{2-2N}u)^{-1}})^{J^c}_{I^c}\right).
  \end{align}
  \end{proof}

  As a special case with $I=J=\emptyset$, we have the following corollary.
  \begin{corollary}\label{inverse det}
    \begin{equation}
      \begin{split}
 {\det}_q(L^{\pm}(u))  {\det}_{\qin}({L^{\pm}(q^{2-2N}u)^{-1}})=1.
      \end{split}
      \end{equation}
   \end{corollary}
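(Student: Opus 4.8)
The plan is to obtain this identity as the degenerate case $I=J=\emptyset$ of the Jacobi-type ratio theorem (Theorem \ref{Jacobi thm qdet}), so that essentially no new computation is required; the work lies entirely in tracking the boundary conventions correctly. First I would set $I=J=\emptyset$, so that both complements become the full index set $I^c=J^c=\{1<2<\cdots<N\}$. On the left-hand side the $q$-minor ${\det}_q(L^{\pm}(u)^I_J)$ is an empty minor, which by the usual convention equals $1$. On the right-hand side, the $\qin$-minor ${\det}_{\qin}\bigl((L^{\pm}(q^{2-2N}u)^{-1})^{J^c}_{I^c}\bigr)$ is taken over all rows and columns and hence coincides with the full $\qin$-determinant ${\det}_{\qin}(L^{\pm}(q^{2-2N}u)^{-1})$.

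Next I would compute the two inversion numbers appearing in the prefactors. Since $I$ is empty, the concatenated sequence defining $l(I,I^c)$ is just $1,2,\ldots,N$, which has no inversions, so $l(I,I^c)=0$; by the same reasoning $l(J,J^c)=0$. Thus both factors $(-q)^{-l(I,I^c)}$ and $(-q)^{-l(J,J^c)}$ equal $1$. Substituting these values into the identity of Theorem \ref{Jacobi thm qdet} immediately yields
$$1={\det}_q(L^{\pm}(u))\,{\det}_{\qin}(L^{\pm}(q^{2-2N}u)^{-1}),$$
which is exactly the assertion of the corollary.

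The only genuine point to verify — and the step I would be most careful about — is the normalization of the empty $q$-minor as $1$, which must be consistent with the antisymmetrizer definition of ${\det}_q$ when one takes the $0\times 0$ block; everything else is bookkeeping. As an independent sanity check I would reconfirm the identity directly from the defining relation $A_N^q L^{\pm}_1(u)\cdots L^{\pm}_N(q^{2-2N}u)=A_N^q{\det}_q(L^{\pm}(u))$ together with its inverted $R'$-counterpart derived at the start of this subsection: multiplying the two, the product ${\det}_q(L^{\pm}(u))\,{\det}_{\qin}(L^{\pm}(q^{2-2N}u)^{-1})$ should act as a scalar on the one-dimensional image of $A_N^q$, and that scalar must be $1$ since the two matrix products are mutually inverse. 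This route avoids the empty-minor convention altogether and would serve as a fully self-contained backup argument.
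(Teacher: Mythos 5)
Your proposal is correct and is precisely the paper's own argument: the corollary is stated there as the special case $I=J=\emptyset$ of Theorem \ref{Jacobi thm qdet}, with the empty minor equal to $1$ and both inversion numbers vanishing. Your additional sanity check via the antisymmetrizer identity is a reasonable independent confirmation but not needed.
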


The following is an analogue of Schur's complement theorem.

\begin{theorem}\label{sdet Schur's complement theorem} Let
\begin{equation}
  L^{\pm}(u)=
  \begin{pmatrix}
      L^{\pm}_{11}(u) &L^{\pm}_{12}(u)\\
      L^{\pm}_{21}(u) &L^{\pm}_{22}(u)
    \end{pmatrix}
\end{equation}
be the block matrix such that
$L^{\pm}_{11}(u)$ and $L^{\pm}_{22}(u)$ are submatrices of sizes $k\times k$ and $(N-k)\times (N-k)$ respectively.
Then
\begin{align*}
  &{\det}_q  (L^{\pm}(u))\\
&={\det}_q(L^{\pm}_{11}(u)) {\det}_{q}\left(L^{\pm}_{22}(q^{-2k}u)-L^{\pm}_{21}(q^{-2k}u)L^{\pm}_{11}(q^{-2k}u)^{-1}L^{\pm}_{12}(q^{-2k}u)\right)\\
&={\det}_q(L^{\pm}_{22}(u))  {\det}_q(L^{\pm}_{22}(q^{2(k-N)}u)-L^{\pm}_{21}(q^{2(k-N)}u)L^{\pm}_{11}(q^{2(k-N)}u)^{-1}L^{\pm}_{12}(q^{2(k-N)}u)).
  \end{align*}
\end{theorem}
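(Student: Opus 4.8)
The plan is to deduce the Schur complement factorization from the $q$-Jacobi ratio theorem (Theorem~\ref{Jacobi thm qdet}) together with two auxiliary ingredients: a purely linear-algebraic block-inverse identity, and the fact that the quantum Schur complement is again a solution of the $R$-matrix relation of one lower rank. First I would record the block-inverse identity
\[
\big(L^{\pm}(v)^{-1}\big)^{\{k+1,\dots,N\}}_{\{k+1,\dots,N\}}
=\big(L^{\pm}_{22}(v)-L^{\pm}_{21}(v)L^{\pm}_{11}(v)^{-1}L^{\pm}_{12}(v)\big)^{-1},
\]
which holds entrywise over $\mathrm U_q(\widehat{\gl}_N)$ (in the completion where $L^{\pm}_{11}(v)$ is invertible, its leading term being triangular with invertible diagonal by \eqref{e:defn}). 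This is valid over any noncommutative ring: solving the block system $L^{\pm}(v)X=I$ for the lower-right block column yields exactly the inverse of the Schur complement, with no commutativity used. Write $M(v)=L^{\pm}_{22}(v)-L^{\pm}_{21}(v)L^{\pm}_{11}(v)^{-1}L^{\pm}_{12}(v)$ for this $(N-k)\times(N-k)$ matrix.

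Next I would specialize Theorem~\ref{Jacobi thm qdet} to $I=J=\{1,\dots,k\}$, so that $I^c=J^c=\{k+1,\dots,N\}$ and $l(I,I^c)=l(J,J^c)=0$. Since ${\det}_q(L^{\pm}(u)^I_J)={\det}_q(L^{\pm}_{11}(u))$ and $(L^{\pm}(q^{2-2N}u)^{-1})^{J^c}_{I^c}$ is precisely the lower-right block of the inverse, the theorem combined with the block-inverse identity gives
\[
{\det}_q(L^{\pm}_{11}(u))={\det}_q(L^{\pm}(u))\,{\det}_{\qin}\!\big(M(q^{2-2N}u)^{-1}\big).
\]
It then remains to convert the factor ${\det}_{\qin}(M(q^{2-2N}u)^{-1})$ into ${\det}_q(M(q^{-2k}u))^{-1}$.

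The key structural input, which I expect to be the main obstacle, is that the quantum Schur complement $M(v)$, viewed as a spectral-parameter-dependent matrix, again satisfies the relation $R(v/w)M_1(v)M_2(w)=M_2(w)M_1(v)R(v/w)$ for the $R$-matrix of $\mathrm U_q(\widehat{\gl}_{N-k})$. This is the quantum-affine analogue of the statement that the lower-right Gauss factor generates a sub-quantum-affine-algebra, and it can be checked directly from the defining relations \eqref{e:defn} (equivalently, read off from the Ding--Frenkel Gauss decomposition \cite{DF}); the delicate point is the bookkeeping of the $L^{\pm}_{11}(v)^{-1}$ terms against the $R$-matrix. Granting this, $M$ is a genuine quantum $L$-operator of rank $N-k$, its quantum determinant is defined through the $(N-k)$-antisymmetrizer, and Corollary~\ref{inverse det} applies to $M$: with $N$ replaced by $N-k$ it reads ${\det}_q(M(w))\,{\det}_{\qin}(M(q^{2-2(N-k)}w)^{-1})=1$. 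Choosing $w=q^{-2k}u$ gives $q^{2-2(N-k)}w=q^{2-2N}u$, whence ${\det}_{\qin}(M(q^{2-2N}u)^{-1})={\det}_q(M(q^{-2k}u))^{-1}$. Substituting into the displayed identity and rearranging yields ${\det}_q(L^{\pm}(u))={\det}_q(L^{\pm}_{11}(u))\,{\det}_q(M(q^{-2k}u))$, which is the first factorization.

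Finally, the second factorization follows by the symmetric argument: I would apply Theorem~\ref{Jacobi thm qdet} with $I=J=\{k+1,\dots,N\}$ (here $l(I,I^c)=l(J,J^c)=k(N-k)$ again cancel) and use the complementary block-inverse identity $(L^{\pm}(v)^{-1})^{\{1,\dots,k\}}_{\{1,\dots,k\}}=(L^{\pm}_{11}(v)-L^{\pm}_{12}(v)L^{\pm}_{22}(v)^{-1}L^{\pm}_{21}(v))^{-1}$, together with Corollary~\ref{inverse det} for the resulting rank-$k$ Schur complement. The same parameter bookkeeping (now with $N-k$ replaced by $k$) produces the shift $q^{2(k-N)}u$, giving
\[
{\det}_q(L^{\pm}(u))={\det}_q(L^{\pm}_{22}(u))\,{\det}_q\!\big(L^{\pm}_{11}(q^{2(k-N)}u)-L^{\pm}_{12}(q^{2(k-N)}u)L^{\pm}_{22}(q^{2(k-N)}u)^{-1}L^{\pm}_{21}(q^{2(k-N)}u)\big),
\]
as required.
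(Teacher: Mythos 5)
Your proposal follows essentially the same route as the paper: specialize Theorem~\ref{Jacobi thm qdet} to $I=J=\{1,\dots,k\}$, identify the corner block of $L^{\pm}(v)^{-1}$ with the inverse of the Schur complement $M(v)$, and then use Corollary~\ref{inverse det} applied to $M$ to convert ${\det}_{\qin}$ of the inverse into ${\det}_q(M(q^{-2k}u))^{-1}$, with the symmetric argument for the second factorization. The one point worth flagging is the step you call ``the main obstacle'' and then grant without proof --- that $M(v)$ satisfies the rank-$(N-k)$ RTT relation. You do not need a direct computation or the Gauss decomposition here: it follows in one line from the block-inverse identity you already wrote down. Indeed, $L^{\pm}(v)^{-1}$ satisfies the $q^{-1}$-RTT relation (as established just before Theorem~\ref{Jacobi thm qdet}), a corner submatrix of an RTT matrix satisfies the RTT relation for the smaller $R$-matrix of the same type, and taking the inverse flips $q^{-1}$ back to $q$; since $M(v)=\bigl((L^{\pm}(v)^{-1})^{\{k+1,\dots,N\}}_{\{k+1,\dots,N\}}\bigr)^{-1}$, it is a genuine $q$-RTT matrix of size $N-k$, which is exactly the observation the paper uses. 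With that substitution your argument is complete and matches the paper's proof.
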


  \begin{proof}
We denote by $X^{\pm}(u)$ the inverse of $L^{\pm}(u)$  and write it as
\begin{equation}
  X^{\pm}(u)=
  \begin{pmatrix}
      X^{\pm}_{11}(u)&X^{\pm}_{12}(u)\\
      X^{\pm}_{21}(u) &X^{\pm}_{22}(u)
    \end{pmatrix},
\end{equation}
then
\begin{equation}
  \begin{split}
    X^{\pm}_{11}(u)= \left(L^{\pm}_{11}(u)-L^{\pm}_{12}(u)L^{\pm}_{22}(u)^{-1}L^{\pm}_{21}(u)\right) ^{-1},  \\
    X^{\pm}_{22}(u)=  \left(L^{\pm}_{22}(u)-L^{\pm}_{21}(u)L^{\pm}_{11}(u)^{-1}L^{\pm}_{12}(u)\right) ^{-1} .
  \end{split}
\end{equation}

It follows from Theorem \ref{Jacobi thm qdet} that
\begin{equation}\label{Schur complement eq1}
  \begin{split}
    {\det}_q(L^{\pm}_{11}(u))={\det}_q(L^{\pm}(u)){\det}_{q^{-1}}\left(X_{22}^{\pm}(q^{2-2N}u)\right).
  \end{split}
\end{equation}

Since the matrix $
L^{\pm}_{22}(u)-L^{\pm}_{21}(u)L^{\pm}_{11}(u)^{-1}L^{\pm}_{12}(u)$ is the inverse of $X^{\pm}_{22}(u)$, it satisfies the
$RTT$ relations.
By Corollary \ref{inverse det},
\begin{equation}\label{Schur complement eq2}
  \begin{aligned}
   & {\det}_{q^{-1}}(X^{\pm}_{22}(u)) ^{-1}\\
   &=    {\det}_{q}\left(L^{\pm}_{22}(q^{-2k}u)-L^{\pm}_{21}(q^{-2k}u)L^{\pm}_{11}(q^{-2k}u)^{-1}L^{\pm}_{12}(q^{-2k}u)\right).
  \end{aligned}
\end{equation}
Combing \eqref{Schur complement eq1} and \eqref{Schur complement eq2},
we obtain the first equation. The second equation can be proved similarly.
\end{proof}

  Using Jacobi's theorem we obtain the following analog of Cayley's complementary identity for quantum determinants.
  \begin{theorem}\label{skly cayley thm }
  Suppose a minor identity for quantum determinants is given:
  \begin{equation}
  \sum_{i=1}^{k}b_i \prod_{j=1}^{m_i}
  {\det}_{q}\left((L^{\epsilon_{ij}}(u))^{I_{ij}}_{J_{ij}} \right)=0,
  \end{equation}
  where $I_{ij} $ and $J_{ij} $ are subsets of $[1,N]$ ,
  $\epsilon_{ij}=+$ or $-$
  and $b_i\in \mathbb C(q)$.
  Then the following identity holds
  \begin{equation}
  \sum_{i=1}^{k} b_i'\prod_{j=1}^{m_i} (-q)^{l(I_{ij}^c,I_{ij})-l(J_{ij}^c,J_{ij})}
  {\det}_q(L^{\epsilon_{ij}}(u))^{-1}
  {\det}_{q} (L^{\epsilon_{ij}}(u))_{I_{ij}^c}^{J_{ij}^c})=0,
  \end{equation}
  where $b_i'$ is obtained from $b_i$ by changing $q$ by $q^{-1}$.
  \end{theorem}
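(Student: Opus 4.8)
The plan is to transport the given identity to its complement by passing through the inverse matrix, with Jacobi's ratio theorem (Theorem \ref{Jacobi thm qdet}) serving as the bridge. First I would recast Theorem \ref{Jacobi thm qdet} into a form matching the factors that appear on the right-hand side of the assertion. Set $M^{\epsilon}(v)=L^{\epsilon}(q^{2-2N}v)^{-1}$, which is legitimate because ${\det}_q(L^{\epsilon}(u))$ is invertible (indeed central). Solving the identity of Theorem \ref{Jacobi thm qdet} for the $\qin$-minor of the inverse and then relabelling the index sets there (replacing its $I,J$ by the complements of the present column and row sets) gives, after a short computation of inversion numbers from the definition of $l(\cdot,\cdot)$,
\begin{equation*}
{\det}_{\qin}\bigl((M^{\epsilon}(u))^{I}_{J}\bigr)
=(-q)^{\,l(I^c,I)-l(J^c,J)}\,{\det}_q(L^{\epsilon}(u))^{-1}\,{\det}_q\bigl((L^{\epsilon}(u))^{J^c}_{I^c}\bigr).
\tag{$\heartsuit$}
\end{equation*}
The right-hand side of $(\heartsuit)$ is exactly the factor occurring in the claimed complementary identity, so $(\heartsuit)$ lets me rewrite every factor of the target as a single $\qin$-minor of the inverse matrix $M^{\epsilon}$.

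Substituting $(\heartsuit)$ into the asserted identity (with $I,J$ there taken to be $I_{ij},J_{ij}$) collapses it to
\begin{equation*}
\sum_{i=1}^{k} b_i'\prod_{j=1}^{m_i}{\det}_{\qin}\bigl((M^{\epsilon_{ij}}(u))^{I_{ij}}_{J_{ij}}\bigr)=0.
\tag{$\clubsuit$}
\end{equation*}
It therefore suffices to prove $(\clubsuit)$. For this I would invoke the inverted $RTT$ relation established in the excerpt, $R'(u/v)L^{\epsilon}_1(u)^{-1}L^{\epsilon}_2(v)^{-1}=L^{\epsilon}_2(v)^{-1}L^{\epsilon}_1(u)^{-1}R'(u/v)$ with $R'$ the matrix obtained from $R$ by $q\mapsto\qin$; after the harmless reparametrization $v\mapsto q^{2-2N}v$ this says precisely that the matrices $M^{\pm}$ satisfy the defining relations of $\mathrm U_{\qin}(\widehat{\gl}_N)$. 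Since the given minor identity is a universal consequence of the $RTT$ relations — it holds in the universal algebra $\mathrm U_q(\widehat{\gl}_N)$ and hence for any matrices obeying those relations over any algebra — its $\qin$-analogue, obtained by the symmetry $q\mapsto\qin$ (which sends each $b_i$ to $b_i'$ and each ${\det}_q$ to ${\det}_{\qin}$), must hold for the $\qin$-$RTT$ matrices $M^{\pm}$. That statement is exactly $(\clubsuit)$, completing the argument.

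The hard part will be the justification at this last step rather than the bookkeeping in $(\heartsuit)$. I must verify that the pair $(M^+(v),M^-(v))$ satisfies \emph{all} defining relations of $\mathrm U_{\qin}(\widehat{\gl}_N)$ — not only the two like-signed relations already recorded in the excerpt, but also the mixed relation (under which $q^c\mapsto q^{-c}$) and the triangular and diagonal normalizations of the order-zero components — since only then does the transfer of the identity apply to terms in which $\epsilon_{ij}=+$ and $\epsilon_{ij}=-$ occur together. A secondary, purely computational difficulty is tracking the spectral shift $u\mapsto q^{2-2N}u$ and the $(-q)$ versus $(-\qin)$ signs through $(\heartsuit)$; I would control these using the inversion-number relation $l(I,I^c)+l(I^c,I)=|I|\,|I^c|$ together with Corollary \ref{inverse det}, which pins down ${\det}_{\qin}$ of the inverse in terms of ${\det}_q(L^{\epsilon}(u))^{-1}$.
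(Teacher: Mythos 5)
Your proof is correct and follows essentially the same route as the paper: the paper first applies the given identity to $L^{\pm}(u)^{-1}$ (which satisfies the $q^{-1}$-$RTT$ relations) and then converts each $q^{-1}$-minor back via Theorem \ref{Jacobi thm qdet}, finishing with the substitution $u\mapsto q^{2-2N}u$, which is exactly your argument read in the opposite order with the spectral shift absorbed into $M^{\epsilon}$. The only difference is that you explicitly flag the need to check the mixed $(+,-)$ relations for the inverse matrices, a point the paper passes over silently.
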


  \begin{proof}
  The matrix $L^{\pm}(u)^{-1}$ satisfies the $q^{-1}$-$RTT$ relations. Applying the minor identity to $L^{\pm}(u)^{-1}$ we get that
  \begin{equation}
    \sum_{i=1}^{k}b_i' \prod_{j=1}^{m_i}
    {\det}_{q^{-1}}\left((L^{\epsilon_{ij}}(u)^{-1})^{I_{ij}}_{J_{ij}} \right)=0.
  \end{equation}
  It follows from Theorem \ref{Jacobi thm qdet} that
  \begin{equation}
    \begin{aligned}
    &{\det}_{q^{-1}}\left((L^{\epsilon_{ij}}(u)^{-1})^{I_{ij}}_{J_{ij}} \right)\\
    &=(-q)^{l(I_{ij}^c,I_{ij})-l(J_{ij}^c,J_{ij})}{\det}_q(L^{\epsilon_{ij}}(q^{2N-2}u))^{-1}
    {\det}_{q} (L^{\epsilon_{ij}}(q^{2N-2}u)_{I_{ij}^c}^{J_{ij}^c}).
  \end{aligned}
  \end{equation}
The proof is completed  by replacing $u$ with $q^{2-2N}u$.

  \end{proof}

  The following theorem is an analog of Muir's law for the quantum determinants.
  \begin{theorem}\label{qdet muir law}
  Suppose a quantum minor determinant identity is given:
  \begin{equation}
    \sum_{i=1}^{k}b_i \prod_{j=1}^{m_i}
    {\det}_{q}\left((L^{\epsilon_{ij}}(u))^{I_{ij}}_{J_{ij}} \right)=0,
    \end{equation}
  where $I_{ij}'s$ are subsets of $T= \{1,2,\dots,N\}$,
  $\epsilon_{ij}=+$ or $-$
  and $b_i\in \mathbb C(q)$.
  Let $K$ be the set $\{N,\dots, N+M\}$. Then the following identity holds
  \begin{equation}\label{qdet muir law equation}
  \sum_{i=1}^{k} b_i\prod_{j=1}^{m_i}
  {\det}_{q} ((L^{\epsilon_{ij}}(u))_{K})^{-1}
  {\det}_{q} ((L^{\epsilon_{ij}}(u))^{I_{ij}\cup K}_{J_{ij}\cup K})=0.
  \end{equation}
  \end{theorem}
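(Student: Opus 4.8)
The plan is to transfer the given identity from an $N\times N$ quantum $L$-operator to a Schur-complement operator sitting inside a larger quantum affine algebra; passing to this Schur complement is exactly what realizes the adjunction of the common block $K$ to every minor. This mirrors the proof of Theorem~\ref{skly cayley thm }, where the identity was instead transferred to the inverse matrix $L^{\pm}(u)^{-1}$.

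First I would enlarge the setting. Put $p=|K|$ and regard the matrix $L^{\pm}(u)$ of the conclusion as the generator matrix of $\mathrm{U}_q(\widehat{\gl}_{N+p})$, with rows and columns indexed by $T\sqcup K$, where $T=\{1,\dots,N\}$ and $K$ occupies the last $p$ positions, so that every index of $K$ exceeds every index of $T$, hence of all the $I_{ij},J_{ij}$. Writing $L^{\pm}(u)$ in block form along $T\sqcup K$, form the Schur complement of the $K$-block,
\begin{equation}
\widetilde{L}^{\pm}(u)=L^{\pm}_{TT}(u)-L^{\pm}_{TK}(u)\,L^{\pm}_{KK}(u)^{-1}\,L^{\pm}_{KT}(u),
\end{equation}
an $N\times N$ matrix. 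As in the proof of Theorem~\ref{sdet Schur's complement theorem}, $\widetilde{L}^{\pm}(u)$ is the inverse of the $T$-block of $L^{\pm}(u)^{-1}$; since $L^{\pm}(u)^{-1}$ obeys the $\qin$-$RTT$ relation and the $T$-block, being a principal contiguous submatrix, again obeys that relation, the matrix $\widetilde{L}^{\pm}(u)$ obeys the original $q$-$RTT$ relation of an $N\times N$ $L$-operator. Because the hypothesized identity is a formal consequence of those $q$-$RTT$ relations, it holds for every $N\times N$ matrix obeying them, and in particular
\begin{equation}\label{e:muir-sc}
\sum_{i=1}^{k}b_i\prod_{j=1}^{m_i}{\det}_q\!\left(\widetilde{L}^{\epsilon_{ij}}(v)^{I_{ij}}_{J_{ij}}\right)=0
\end{equation}
holds for the spectral parameter $v$ of $\widetilde{L}^{\pm}$.

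The key step, which I expect to be the main obstacle, is the minor factorization with a common block: for $I,J\subseteq T$ with $|I|=|J|$,
\begin{equation}\label{e:muir-factor}
{\det}_q\!\left(L^{\pm}(u)^{I\cup K}_{J\cup K}\right)={\det}_q\!\left((L^{\pm}(u))_{K}\right)\,{\det}_q\!\left(\widetilde{L}^{\pm}(q^{-2p}u)^{I}_{J}\right).
\end{equation}
This is the off-diagonal, minor-level refinement of Theorem~\ref{sdet Schur's complement theorem} (the case $I=J=T$ is the second identity of that theorem), and I would prove it by the same antisymmetrizer manipulation used there, splitting the full antisymmetrizer as $A^{q}_{m}=A^{q}_{m}A^{q}_{\{|I|+1,\dots,m\}}$ with $m=|I|+p$ and peeling the $K$-block off the ordered product, or equivalently by feeding the pair of index sets $I\cup K$, $J\cup K$ into the Jacobi ratio theorem (Theorem~\ref{Jacobi thm qdet}) for the enlarged matrix. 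Crucially, because $K$ has been placed after $T$, both $I\cup K$ and $J\cup K$ are already increasingly ordered, so no reordering is needed and no power-of-$(-q)$ prefactor survives; this is precisely what makes the final identity sign-free, in contrast with the Cayley identity of Theorem~\ref{skly cayley thm }. The only bookkeeping is the uniform spectral shift $u\mapsto q^{-2p}u$ carried by the Schur complement, which must be tracked identically in every factor.

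Finally I would combine the two ingredients. Rewriting \eqref{e:muir-factor} as
\begin{equation}
{\det}_q\!\left(\widetilde{L}^{\pm}(q^{-2p}u)^{I}_{J}\right)={\det}_q\!\left((L^{\pm}(u))_{K}\right)^{-1}{\det}_q\!\left(L^{\pm}(u)^{I\cup K}_{J\cup K}\right)
\end{equation}
and substituting into \eqref{e:muir-sc} evaluated at $v=q^{-2p}u$ converts each factor into ${\det}_q((L^{\epsilon_{ij}}(u))_{K})^{-1}{\det}_q(L^{\epsilon_{ij}}(u)^{I_{ij}\cup K}_{J_{ij}\cup K})$, all evaluated at the same $u$; this is exactly \eqref{qdet muir law equation}. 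The main obstacle is thus the clean, sign-free proof of the common-block factorization \eqref{e:muir-factor} and the consistent tracking of the shift $q^{-2p}u$, both of which reduce to the antisymmetrizer bookkeeping already developed for the Schur complement theorem.
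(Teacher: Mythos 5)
Your argument is correct in outline, but it takes a genuinely different route from the paper. The paper's proof is a two-line ``double Cayley'' argument: it first applies its Cayley complementary identity with respect to the set $T$, replacing each ${\det}_q\bigl((L^{\epsilon_{ij}}(u))^{I_{ij}}_{J_{ij}}\bigr)$ by a signed multiple of ${\det}_q(L^{\epsilon_{ij}}(u)_T)^{-1}{\det}_q\bigl(L^{\epsilon_{ij}}(u)^{T\setminus J_{ij}}_{T\setminus I_{ij}}\bigr)$ with $b_i\mapsto b_i'$, and then applies Cayley again, this time with respect to $T\cup K$, to the resulting identity; since the complement of $T\setminus I_{ij}$ inside $T\cup K$ is $I_{ij}\cup K$, the two sign prefactors and the two substitutions $q\mapsto q^{-1}$ cancel and \eqref{qdet muir law equation} falls out with no further computation. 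You instead pass to the Schur complement $\widetilde L^{\pm}(u)=\bigl((L^{\pm}(u)^{-1})_{TT}\bigr)^{-1}$ and rest everything on the common-block factorization ${\det}_q\bigl(L^{\pm}(u)^{I\cup K}_{J\cup K}\bigr)={\det}_q\bigl(L^{\pm}(u)_K\bigr)\,{\det}_q\bigl(\widetilde L^{\pm}(q^{-2p}u)^{I}_{J}\bigr)$, which is not stated in the paper and is the one ingredient you must actually prove; it does follow from Theorem \ref{Jacobi thm qdet} exactly as you indicate, since applying Jacobi in the enlarged algebra to the pair $(I\cup K, J\cup K)$ and in the $N\times N$ algebra to $(I,J)$ with $v=q^{-2p}u$ produces the identical prefactor $(-q)^{l(I,T\setminus I)-l(J,T\setminus J)}$ and the identical ${\det}_{q^{-1}}$-factor $\bigl(L^{\pm}(q^{2-2N-2p}u)^{-1}\bigr)^{T\setminus J}_{T\setminus I}$ on both sides, so the ratio is independent of $I,J$ and equals ${\det}_q(L^{\pm}(u)_K)$ by Corollary \ref{inverse det}; your spectral shift $q^{-2p}$ and your claim that no residual sign survives both check out. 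The trade-off is clear: the paper's route is essentially free once Cayley is in hand, while yours costs an extra lemma but makes the mechanism transparent (adjoining the block $K$ to every minor \emph{is} passing to the Schur complement of the $K$-block) and that lemma is precisely the minor-level statement underlying the paper's Sylvester theorem (Theorem \ref{qdet sylvester}). Two caveats you share with the paper's own Cayley/Sylvester proofs and should at least flag: transferring the hypothesized identity to $\widetilde L^{\pm}$ requires that the pair $(\widetilde L^{+},\widetilde L^{-})$ satisfy the mixed $L^{+}L^{-}$ relation as well as the separate $RTT$ relations, and the divisions by ${\det}_q(\widetilde L^{\pm}(q^{-2p}u))$ and ${\det}_q(L^{\pm}(u)_K)$ implicitly use the commutation of these (minor) determinants with the relevant quantum minors.
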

  \begin{proof}
  Applying Cayley's complementary identity for the set $T$, we get that
  \begin{equation}\begin{aligned}
    &\sum_{i=1}^{k} b_i'\prod_{j=1}^{m_i} (-q)^{l(T\setminus I_{ij},I_{ij})-l(T \setminus J_{ij},J_{ij})}{\det}_q(L^{\epsilon_{ij}}(u)_T)^{-1}\\
    & \qquad\cdot{\det}_{q} (L^{\epsilon_{ij}}(u)_{T \setminus  I_{ij}}^{T \setminus J_{ij}})=0.
    \end{aligned}
    \end{equation}

  The equation \eqref{qdet muir law equation} is similarly shown by using Cayley's
  complementary identity for the set $T\cup K$.
  \end{proof}

  The following is an analog of Sylvester's theorem for the quantum determinants given by Hopkins and Molev \cite{HM}. We will give a proof using Muir's Law.
  \begin{theorem}\label{qdet sylvester}
    Let
    $T=\{1,\cdots,N\}$ ,
    $K=\{N+1,\cdots,N+M\}$.
   Then the mapping $l^{\pm}_{ij}(u)\mapsto  {l^{\pm}}^{i,N+1,\cdots,N+M}_{j,N+1,\cdots,N+M}(u)$
   defines an algebra morphism $\mathrm U_q(\gl_N)\rightarrow \mathrm U_q(\gl_{M+N})$. Denote $\tilde{l}^{\pm}_{ij}(u)$ by the image of  $l^{\pm}_{ij}(u)$
   and $\tilde{L}^{\pm}(u)=(\tilde{l}^{\pm}_{ij}(u))$. Then
   \begin{equation}
    \begin{split}
      {\det}_{q}( \tilde{L}^{\pm}(u)) ={\det}_{q}( {L}^{\pm}(u))  \prod_{i=1}^{N-1} {\det}_{q}( {L}^{\pm}(q^{-2i}u)_K).
      \end{split}
    \end{equation}
   \end{theorem}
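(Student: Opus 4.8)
The plan is to compute ${\det}_q(\tilde L^{\pm}(u))$ from its Leibniz expansion and then identify the right-hand side through Muir's law (Theorem \ref{qdet muir law}). The starting observation is that the image $\tilde l^{\pm}_{ij}(u)$ is exactly the extended quantum minor
\[ \tilde l^{\pm}_{ij}(u)={\det}_q\bigl(L^{\pm}(u)^{\{i\}\cup K}_{\{j\}\cup K}\bigr), \]
that is, the $1\times 1$ minor ${\det}_q(L^{\pm}(u)^{\{i\}}_{\{j\}})=l^{\pm}_{ij}(u)$ over $T$ enlarged by the fixed block $K=\{N+1,\dots,N+M\}$. That $l^{\pm}_{ij}(u)\mapsto\tilde l^{\pm}_{ij}(u)$ respects the defining relations \eqref{e:defn}, so that $\tilde L^{\pm}(u)$ is again an $RTT$ matrix and ${\det}_q(\tilde L^{\pm}(u))$ is a bona fide quantum determinant, I would verify separately by checking \eqref{e:defn} for $\tilde L^{\pm}(u)$ directly from the quantum minor relations and the Yang--Baxter equation \eqref{YBE} (the standard fusion computation), or cite \cite{HM}.

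Next I would take as the base relation the quantum-determinant expansion of the principal $T$-minor,
\[ {\det}_q\bigl(L^{\pm}(u)_T\bigr)=\sum_{\sigma\in\mathfrak S_N}(-q)^{-l(\sigma)}\,l^{\pm}_{\sigma(1),1}(u)\,l^{\pm}_{\sigma(2),2}(q^{-2}u)\cdots l^{\pm}_{\sigma(N),N}(q^{2-2N}u), \]
read as an identity among the $1\times1$ minors $l^{\pm}_{ab}(v)={\det}_q(L^{\pm}(v)^{\{a\}}_{\{b\}})$ over $T$. Applying Muir's law with extension block $K$ replaces each minor over $T$ at a parameter $v$ by ${\det}_q(L^{\pm}(v)_K)^{-1}$ times its enlargement over $T\cup K$ at the same $v$. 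On the left, the $T$-minor becomes ${\det}_q(L^{\pm}(u)_K)^{-1}{\det}_q(L^{\pm}(u))$ because $T\cup K$ is the full index set; on the right, the factor $l^{\pm}_{\sigma(i),i}(q^{2-2i}u)$ becomes ${\det}_q(L^{\pm}(q^{2-2i}u)_K)^{-1}\,\tilde l^{\pm}_{\sigma(i),i}(q^{2-2i}u)$. Summing over $\sigma$ then reproduces, up to the accumulated block determinants, the Leibniz expansion of ${\det}_q(\tilde L^{\pm}(u))$.

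Finally I would carry out the spectral and ordering bookkeeping. Pulling the $\sigma$-independent factors ${\det}_q(L^{\pm}(q^{2-2i}u)_K)^{-1}$ to the front leaves precisely $\sum_{\sigma}(-q)^{-l(\sigma)}\prod_i\tilde l^{\pm}_{\sigma(i),i}(q^{2-2i}u)={\det}_q(\tilde L^{\pm}(u))$, so that
\[ {\det}_q(L^{\pm}(u)_K)^{-1}{\det}_q(L^{\pm}(u))=\Bigl(\prod_{i=1}^{N}{\det}_q(L^{\pm}(q^{2-2i}u)_K)^{-1}\Bigr){\det}_q(\tilde L^{\pm}(u)). \]
Solving for ${\det}_q(\tilde L^{\pm}(u))$, separating the $i=1$ (parameter-$u$) block determinant, and cancelling it against ${\det}_q(L^{\pm}(u)_K)^{-1}$ yields the claimed identity ${\det}_q(\tilde L^{\pm}(u))={\det}_q(L^{\pm}(u))\prod_{i=1}^{N-1}{\det}_q(L^{\pm}(q^{-2i}u)_K)$. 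I expect the main obstacle to be exactly these reorderings: Muir's law must be used in the form where minors sitting at different spectral parameters are each enlarged at their own parameter (a parameter-by-parameter refinement of Theorem \ref{qdet muir law}, obtained by re-running its underlying Cayley-complementary and Jacobi arguments), and, since the block determinant ${\det}_q(L^{\pm}(v)_K)$ is not central in the full algebra, moving it past the extended minors $\tilde l^{\pm}_{ab}(w)$ and past the other block determinants requires establishing the relevant commutation relations (up to powers of $q$) and checking that these powers cancel so that the shifted arguments $q^{-2i}u$ emerge cleanly. Everything else is the formal manipulation above.
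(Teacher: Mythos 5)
Your proposal follows essentially the same route as the paper: both take the Leibniz expansion of ${\det}_q(L^{\pm}(u)_T)$ as the base minor identity, apply Muir's law with extension block $K$ to convert each $1\times 1$ minor $l^{\pm}_{\sigma(i),i}(q^{2-2i}u)$ into ${\det}_q(L^{\pm}(q^{2-2i}u)_K)^{-1}\tilde l^{\pm}_{\sigma(i),i}(q^{2-2i}u)$, and cancel the $i=0$ block determinant against the left-hand side. Your explicit flagging of the parameter-by-parameter refinement of Muir's law and of the needed commutation of the block determinants is a point the paper passes over silently, but it does not change the argument.
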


    \begin{proof}

    It follows from Muir's Law (Theorem \ref{qdet muir law}) and centrality of quantum minors that the map  defines an algebra morphism.
    Applying Muir's law to the equation
    \begin{equation}
    \begin{split}
      {\det}_{q}( {L}^{\pm}(u)_T)=
        \sum_{\sigma\in \mathfrak{S}_{N}}(-q)^{-l(\sigma)}\cdot {l^{\pm}}_{{\sigma(1)}1}(u)\cdots {l^{\pm}}_{{\sigma(N)}N}(q^{2-2N}u)
    \end{split}
    \end{equation}
    we get that
    \begin{align}\notag
        &{\det}_{q}( {L}^{\pm}(u)_K)^{-1}{\det}_{q}( {L}^{\pm}(u))\\
        &={\det}_{q}( \tilde{L}^{\pm}(u)) \prod_{i=0}^{N-1} {\det}_{q}( {L}^{\pm}(q^{-2i}u)_K)^{-1}.
      \end{align}
Consequently we see that
\begin{equation}
    {\det}_{q}( \tilde{L}^{\pm}(u)) ={\det}_{q}( {L}^{\pm}(u))  \prod_{i=1}^{N-1} {\det}_{q}( {L}^{\pm}(q^{-2i}u)_K).
  \end{equation}
\end{proof}

Denote by $H_{m}^q$ the $q$-symmetrizer
\begin{equation}
H_{m}^{q}= \frac{1}{m!} \sum_{\sigma\in \mathfrak{S}_{m}}  P_{\sigma}^{q}.
 \end{equation}

   \begin{lemma} In the quantum affine algebra $U_q(\widehat{\frak{gl}}_N)$
    \begin{equation}
      L^{\pm}_{1} (u)\cdots L^{\pm}_{m}(q^{-2m+2}u)H_{m}^{q}= H_{m}^{q}L^{\pm}_{1} (u)\cdots L^{\pm}_{m}(q^{-2m+2}u)H_{m}^{q}.
       \end{equation}
   \end{lemma}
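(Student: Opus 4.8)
The plan is to prove the slightly stronger statement that the element
$\mathcal{X}^{\pm}:=L_1^{\pm}(u)\cdots L_m^{\pm}(q^{-2m+2}u)\,H_m^{q}$ is fixed under left multiplication by every generator $P_{s_i}^{q}=P_{i,i+1}^{q}$ of the $\mathfrak{S}_m$-action, that is $P_{i,i+1}^{q}\mathcal{X}^{\pm}=\mathcal{X}^{\pm}$ for $1\le i\le m-1$; the asserted identity is exactly $H_m^{q}\mathcal{X}^{\pm}=\mathcal{X}^{\pm}$. Granting the one-step invariance, since the $P^{q}$ furnish a genuine representation of $\mathfrak{S}_m$ (so each $P_\sigma^{q}=P_{s_{i_1}}^{q}\cdots P_{s_{i_l}}^{q}$ is a product of the $P_{s_i}^{q}$), repeatedly applying the invariance from the innermost factor outward shows that $P_\sigma^{q}\mathcal{X}^{\pm}=\mathcal{X}^{\pm}$ for all $\sigma$, whence $H_m^{q}\mathcal{X}^{\pm}=\tfrac{1}{m!}\sum_{\sigma}P_\sigma^{q}\mathcal{X}^{\pm}=\mathcal{X}^{\pm}$. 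I will also use the elementary fact $P_{s_i}^{q}H_m^{q}=H_m^{q}$, which follows from $H_m^{q}=\tfrac1{m!}\sum_\sigma P_\sigma^{q}$ and the invariance of the sum under $\sigma\mapsto s_i\sigma$.

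The only algebraic input is a single instance of the $RTT$ relation. Writing $u_j=q^{-2(j-1)}u$, the $i$th and $(i+1)$th factors of the product carry arguments of ratio $u_i/u_{i+1}=q^{2}$. A direct computation in $(\mathrm{End}\,\mathbb{C}^{N})^{\otimes 2}$ gives $R(q^{2},1)=(1-q^{2})(P^{q}-1)$, so $R(q^{2},1)$ is a nonzero scalar multiple of $P^{q}-1$. Specializing the defining relation $R(u/v)L_1^{\pm}(u)L_2^{\pm}(v)=L_2^{\pm}(v)L_1^{\pm}(u)R(u/v)$ to $u/v=q^{2}$ in tensor slots $i,i+1$ — after rewriting it in terms of the polynomial matrix $R(u,v)$, which is legitimate because the common normalization scalar relating $R(u,v)$ to $R(u/v)$ has a pole exactly at $u/v=q^{2}$ — then yields $(P_{i,i+1}^{q}-1)L_i^{\pm}(u_i)L_{i+1}^{\pm}(u_{i+1})=L_{i+1}^{\pm}(u_{i+1})L_i^{\pm}(u_i)(P_{i,i+1}^{q}-1)$. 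Equivalently, $P_{i,i+1}^{q}L_i^{\pm}(u_i)L_{i+1}^{\pm}(u_{i+1})=L_i^{\pm}(u_i)L_{i+1}^{\pm}(u_{i+1})-L_{i+1}^{\pm}(u_{i+1})L_i^{\pm}(u_i)+L_{i+1}^{\pm}(u_{i+1})L_i^{\pm}(u_i)P_{i,i+1}^{q}$.

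Finally I would push $P_{i,i+1}^{q}$ through the product. Because $P_{i,i+1}^{q}$ commutes with every $L_j^{\pm}$ for $j\notin\{i,i+1\}$ (they act on disjoint tensor legs and $P_{i,i+1}^{q}$ carries no algebra part), we have $P_{i,i+1}^{q}\mathcal{X}^{\pm}=L_1^{\pm}\cdots L_{i-1}^{\pm}\bigl(P_{i,i+1}^{q}L_i^{\pm}L_{i+1}^{\pm}\bigr)L_{i+2}^{\pm}\cdots L_m^{\pm}H_m^{q}$. Substituting the displayed expansion, the term carrying a trailing $P_{i,i+1}^{q}$ can be moved past $L_{i+2}^{\pm}\cdots L_m^{\pm}$ to meet $H_m^{q}$, where $P_{i,i+1}^{q}H_m^{q}=H_m^{q}$ absorbs it; this produces $+\,L_1^{\pm}\cdots L_{i-1}^{\pm}L_{i+1}^{\pm}L_i^{\pm}L_{i+2}^{\pm}\cdots L_m^{\pm}H_m^{q}$, which exactly cancels the $-L_{i+1}^{\pm}L_i^{\pm}$ contribution, leaving precisely $\mathcal{X}^{\pm}$. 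Hence $P_{i,i+1}^{q}\mathcal{X}^{\pm}=\mathcal{X}^{\pm}$, and the averaging step above finishes the proof. The main obstacle is the second paragraph: one must justify the specialization of the $RTT$ relation at the singular ratio $u/v=q^{2}$ by passing to the polynomial $R(u,v)$ and identifying $R(q^{2},1)$ with a scalar multiple of $P^{q}-1$; once this single commutation relation is secured, the remainder is a short bookkeeping argument on disjoint tensor legs.
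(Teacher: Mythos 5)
Your proof is correct and follows essentially the same route as the paper's: your identity $R(q^2,1)=(1-q^2)(P^q-1)$ is just the $m=2$ case of the paper's antisymmetrizer formula (since $P^q-1=-2A_2^q$), and the paper likewise uses the $RTT$ relation at the ratio $q^2$ to get $A_2^qL_1^{\pm}(u)L_2^{\pm}(q^{-2}u)H_2^q=0$, i.e. $P^q_{i,i+1}$-invariance of $L_1^{\pm}\cdots L_m^{\pm}H_m^q$, and then concludes by averaging over $\mathfrak{S}_m$. You merely make explicit two points the paper leaves implicit, namely the passage to the polynomial $R(u,v)$ at the singular normalization point and the fact that $\sigma\mapsto P^q_\sigma$ is a genuine representation so that $P^q_{s_i}H_m^q=H_m^q$.
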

\begin{proof}
  Since $H_2^q+A_2^q=1$, using the $RTT$ relations we have that
\begin{equation}
  A_{2}^{q} L^{\pm}_{1} (u)  L^{\pm}_{2}(q^{-2}u)H_{2}^{q} =0
  % A_{2}^{q}L^{\pm}_{2}(q^{-2}u) L^{\pm}_{1} (u) H_{2}^{q}=0.
   \end{equation}

   Then
   \begin{equation}
    \begin{split}
      P^q L^{\pm}_{1} (u)  L^{\pm}_{2}(q^{-2}u)H_{2}^{q} = L^{\pm}_{1} (u)  L^{\pm}_{2}(q^{-2}u)H_{2}^{q}.
     % P^{q}L^{\pm}_{2}(q^{-2}u) L^{\pm}_{1} (u) H_{2}^{q}  = L^{\pm}_{2}(q^{-2}u) L^{\pm}_{1} (u) H_{2}^{q}.
    \end{split}
         \end{equation}
The lemma follows from the equation.
\end{proof}

In the following, we give an analog of MacMahon's master theorem for the quantum affine algebra.
 \begin{theorem}In the quantum affine algebra $U_q(\widehat{\frak{gl}}_N)$,
  \begin{align}
  &\sum_{r=0}^k (-1)^r tr_{1,\ldots,k}H_{r}^{q}A^{q}_{\{r+1,\ldots,k\}}
  L^{\pm}_{1} (u)\cdots L^{\pm}_{k}(q^{-2k+2}u)=0,\\
  &\sum_{r=0}^k (-1)^r tr_{1,\ldots,k}A^q_{r} H^{q}_{\{r+1,\ldots,k\}}
  L^{\pm}_{1} (u)\cdots L^{\pm}_{k}(q^{-2k+2}u)=0,
  \end{align}
  where $H^{q}_{\{r+1,\ldots,k\}}$ and $A^{q}_{\{r+1,\ldots,k\}}$
denote the symmetrizer and antisymmetrizer over the copies of $End(\mathbb C^N)$ labeled
by $\{r+1,\ldots,k \}$ respectively.
  \end{theorem}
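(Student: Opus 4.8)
The plan is to prove both displayed identities by a telescoping argument on the alternating sum, exploiting the cyclicity of the partial trace $\mathrm{tr}_{1,\ldots,k}$ together with the idempotent relations for $H^q_m$ and $A^q_m$. I would treat the first identity in detail; the second is proved by the same telescoping argument with the roles of the symmetrizer and the antisymmetrizer interchanged (the branching recursions for $H^q$ and $A^q$ being parallel, differing only in the signs attached to the $q$-transpositions), the two arrangements being conceptually exchanged by reversing the order of the $k$ tensor factors and using the reversal relation $A^q_m L^{\pm}_1(u)\cdots L^{\pm}_m(q^{-2m+2}u)=L^{\pm}_m(q^{-2m+2}u)\cdots L^{\pm}_1(u)A^q_m$. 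Writing $\mathcal L_i=L^{\pm}_i(q^{-2i+2}u)$ so that the common factor is $\mathcal L_1\cdots\mathcal L_k$, I set
$$
\Phi_r=\mathrm{tr}_{1,\ldots,k}\bigl[H^q_r\,A^q_{\{r+1,\ldots,k\}}\,\mathcal L_1\cdots\mathcal L_k\bigr],
$$
so the goal is $\sum_{r=0}^k(-1)^r\Phi_r=0$. (Note that the trace is essential: a direct check at $k=3$ shows the operator $\sum_r(-1)^rH^q_r A^q_{\{r+1,\ldots,k\}}$ is not zero without it.)

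The mechanism is to relate $\Phi_r$ to $\Phi_{r+1}$ by moving the single tensor factor at the interface (the copies $r$ and $r+1$) out of the antisymmetrized block and into the symmetrized block. For this I would use the branching recursions for the $q$-idempotents: peeling the minimal box off the antisymmetrizer expresses $A^q_{\{r+1,\ldots,k\}}$ through $A^q_{\{r+2,\ldots,k\}}$ and a sum of $q$-transpositions carrying the copy $r+1$ across $\{r+2,\ldots,k\}$, while peeling the maximal box off the symmetrizer expresses $H^q_{r+1}$ through $H^q_r$ and a corresponding sum of $q$-transpositions attaching the copy $r+1$ to $\{1,\ldots,r\}$. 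After these substitutions the two consecutive terms share the common left factor $H^q_r$ and right factor $A^q_{\{r+2,\ldots,k\}}$, so that $(-1)^r\Phi_r+(-1)^{r+1}\Phi_{r+1}$ becomes a single trace whose integrand is the interface operator sandwiched between these common factors. At this point the preceding lemma, in the form $A^q_2\,L^{\pm}_1(u)L^{\pm}_2(q^{-2}u)\,H^q_2=0$ together with its shifted copies obtained from the Yang--Baxter equation, and the intertwining relation above, allow me to absorb the interface idempotents into $\mathcal L_1\cdots\mathcal L_k$ and to collapse the identity-parts of the two branchings against each other. The surviving $q$-transposition parts are then recombined, using the cyclicity of $\mathrm{tr}_{1,\ldots,k}$ to rotate the peeled copy, into the boundary data of the comparison of $\Phi_{r+1}$ with $\Phi_{r+2}$, so that the alternating sum telescopes; the endpoints are the fully antisymmetrized term $\Phi_0$ (the quantum determinant) and the fully symmetrized term $\Phi_k$, which cancel against the adjacent boundary contributions.

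The main obstacle is the precise bookkeeping at the interface. The normalized idempotents $H^q_r$ and $A^q_{\{r+1,\ldots,k\}}$ branch with different scalar factors ($1/(r+1)$ versus $1/(k-r)$), and the $q$-transpositions produced by the recursions are conjugated products of the generators $P^q_{i,i+1}$ rather than single permutations; keeping track of the resulting powers of $q$ and signs, and verifying that the identity-parts and the transposition-parts cancel in exactly the pattern required for the telescope to close, is the delicate computational heart of the argument. I would first dispose of the cases $k=1$ and $k=2$ by hand, using $H^q_2+A^q_2=1$ as in the preceding lemma, and then reduce the general telescoping step, after all absorptions, to a single interface identity that can be verified on the two-factor space $(\mathbb C^N)^{\otimes 2}$, where it is a finite check.
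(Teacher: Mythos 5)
Your strategy is the same as the paper's: telescope the alternating sum by exchanging a symmetrized slot for an antisymmetrized one at the interface, using the branching recursions for the $q$-idempotents and the cyclicity of the partial trace. But the proposal stops exactly where the proof begins. The paper's argument rests on one explicit identity, namely
\begin{equation*}
\mathrm{tr}\,H^q_rA^q_{\{r+1,\ldots,k\}}\mathcal L
=\tfrac{r(k-r+1)}{k}\,\mathrm{tr}\,H^q_rA^q_{\{r,\ldots,k\}}\mathcal L
+\tfrac{(r+1)(k-r)}{k}\,\mathrm{tr}\,H^q_{r+1}A^q_{\{r+1,\ldots,k\}}\mathcal L ,
\end{equation*}
after which each intermediate quantity $\mathrm{tr}\,H^q_sA^q_{\{s,\ldots,k\}}\mathcal L$ occurs exactly twice in $\sum_r(-1)^r\Phi_r$ with the same coefficient $\tfrac{s(k-s+1)}{k}$ and opposite signs, so the sum collapses with no boundary terms to chase. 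This identity follows from the two Hecke-algebra branchings $(k-r+1)A^q_{\{r,\ldots,k\}}=A^q_{\{r+1,\ldots,k\}}-(k-r)A^q_{\{r+1,\ldots,k\}}P^q_{r,r+1}A^q_{\{r+1,\ldots,k\}}$ and $(r+1)H^q_{r+1}=H^q_r+rH^q_rP^q_{r,r+1}H^q_r$, plus the one genuinely non-formal input: under $\mathrm{tr}_{1,\ldots,k}(\,\cdot\,\mathcal L)$ both correction terms reduce to the \emph{same} quantity $\mathrm{tr}\,H^q_rP^q_{r,r+1}A^q_{\{r+1,\ldots,k\}}\mathcal L$, which is proved using the reversal relation $A^q_{\{r+1,\ldots,k\}}\mathcal L=\mathcal L^{\mathrm{rev}}A^q_{\{r+1,\ldots,k\}}$, idempotency, and cyclicity of the trace --- not the relation $A^q_2L_1L_2H^q_2=0$ that you invoke, which plays no role here. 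You never write down the recursion, its coefficients, or the cancellation mechanism, and your alternative pairing ($\Phi_r$ against $\Phi_{r+1}$ with a remainder pushed to the next pair) is asserted rather than verified; since whether the remainders actually close up is the entire content of the theorem, this is a genuine gap rather than an omitted routine computation.

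Two further points. Your claim that the general telescoping step reduces to ``a single interface identity that can be verified on the two-factor space $(\mathbb C^N)^{\otimes 2}$'' is not right as stated: the branching identities and the equality of the two correction terms are statements in the group algebra of $\mathfrak S_k$ involving all $k$ tensor factors and carrying $r$- and $k$-dependent coefficients; only the generator $P^q_{r,r+1}$ is a two-factor object. Also, a small slip: $\Phi_0=\mathrm{tr}_{1,\ldots,k}A^q_k\mathcal L$ is the trace of the antisymmetrized product, which is the quantum determinant only when $k=N$.
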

\begin{proof} We show the following identity:
\begin{equation}\label{qdet trace replacement}
\begin{aligned}
&tr_{1,\ldots,k}H_{r}^{q}A^{q}_{\{r+1,\ldots,k\}}
L^{\pm}_{1} (u)\cdots L^{\pm}_{k}(q^{-2k+2}u)\\
=&tr_{1,\ldots,k}\frac{r(k-r+1)}{k}H_{r}^{q}A^{q}_{\{r,\ldots,k\}}
L^{\pm}_{1} (u)\cdots L^{\pm}_{k}(q^{-2k+2}u)\\
+
&tr_{1,\ldots,k}\frac{(r+1)(k-r)}{k}H_{r+1}^{q}A^{q}_{\{r,\ldots,k\}}
L^{\pm}_{1} (u)\cdots L^{\pm}_{m}(q^{-2m+2}u).
\end{aligned}
\end{equation}

In fact, using the group algebra of $\mathfrak{S}_{k}$, we have that
\begin{align*}
(k-r+1)A^q_{\{r,\ldots,k\}}&=A^q_{\{r+1,\ldots,k\}}-(k-r)A^q_{\{r+1,\ldots,k\}}  P^q_{r,r+1} A^q_{\{r+1,\ldots,k\}},\\
(r+1)H^q_{r+1}&=H_{r}^{q}+ r H^{q}_{r} P^q_{r,r+1} H^{q}_{r}.
\end{align*}
Therefore
\begin{equation}
\begin{aligned}
&tr_{1,\ldots,k} H_{r}^{q} A^q_{\{r+1,\ldots,k\}}  P^q_{(r,r+1)} A^q_{\{r+1,\ldots,k\}}
 L^{\pm}_{1} (u)\cdots L^{\pm}_{k}(q^{-2k+2}u)\\
&=tr_{1,\ldots,k} H_{r}^{q} P^q_{(r,r+1)} A^q_{\{r+1,\ldots,k\}} L^{\pm}_{1} (u)\cdots L^{\pm}_{k}(q^{-2k+2}u) A^q_{\{r+1,\ldots,k\}}  \\
&=tr_{1,\ldots,k}H_{r}^{q} P^q_{(r,r+1)} A^q_{\{r+1,\ldots,k\}}L^{\pm}_{1} (u)\cdots L^{\pm}_{k}(q^{-2k+2}u).
\end{aligned}
\end{equation}
Similarly,
\begin{align}\notag
tr_{1,\ldots,k}H^{q}_{r} P^q_{r,r+1} H^{q}_{r}  A^q_{\{r+1,\ldots,k\}}L^{\pm}_{1} (u)\cdots L^{\pm}_{k}(q^{-2k+2}u)\\
=tr_{1,\ldots,k} H_{r}^{q} P^q_{(r,r+1)} A^q_{\{r+1,\ldots,k\}}L^{\pm}_{1} (u)\cdots L^{\pm}_{k}(q^{-2k+2}u).
\end{align}
These imply equation \eqref{qdet trace replacement}. Therefore the telescoping sum equals to zero.

The second equation can be proved by the same arguments.
\end{proof}

\subsection{Liouville formula}
Now we introduce a family   of central elements for the quantum affine algebra \cite{JLM}. Let
  \begin{equation}
  \begin{split}
z^{\pm}(u)^{-1}= \frac{1}{N}tr\left(L^{\pm}(u) D^{-1} L^{\pm}(q^{-2N}u)^{-1}D\right).
 \end{split}
  \end{equation}

  \begin{theorem}\label{qdet z}\cite{JLM} On the quantum affine algebra $U_q(\widehat{\frak{gl}}_N)$, we have that
   \begin{equation}
  \begin{split}
z^{\pm}(u)^{-1}=\frac{{\det}_{q}(L^{\pm}(u))}{{\det}_{q}(L^{\pm}(q^{-2}u))}.
 \end{split}
  \end{equation}

Moreover, we also have
  \begin{equation}
    \begin{split}
  z^{\pm}(u)^{-1}= \frac{1}{N}tr\left(D^{-1} L^{\pm}(q^{-2N}u)^{-1} D L^{\pm}(u)\right),
   \end{split}
    \end{equation}
  \begin{equation}
    \begin{split}
 z^{\pm}(u)=\frac{1}{N} tr\left( D^{-1}{(L^{\pm}}(u)^t)^{-1}D {L^{\pm}}(q^{-2N}u)^t  \right)\\
   = \frac{1}{N} tr\left(
    {L^{\pm}}(q^{-2N}u)^t D^{-1}{(L^{\pm}}(u)^t)^{-1}D \right).
   \end{split}
    \end{equation}

  \end{theorem}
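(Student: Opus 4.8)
The plan is to reduce every trace appearing in the theorem to the comatrix identities of Proposition \ref{qdet comatrix}. Writing $\widehat{L}^{\pm}(u)$ for the comatrix, the relation $\widehat{L}^{\pm}(u)L^{\pm}(q^{2-2N}u)={\det}_q(L^{\pm}(u))I$ gives, after the substitution $u\mapsto q^{-2}u$, the key replacement
\[
L^{\pm}(q^{-2N}u)^{-1}={\det}_q(L^{\pm}(q^{-2}u))^{-1}\,\widehat{L}^{\pm}(q^{-2}u),
\]
which I would insert into each of the four trace expressions. Since ${\det}_q(L^{\pm}(q^{-2}u))$ is central, it factors out of the trace, so in every case the problem collapses to evaluating a single trace involving $\widehat{L}^{\pm}(q^{-2}u)$ and one undeformed factor $L^{\pm}$, and to showing it equals $N$ times a quantum determinant.

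First I would treat the defining trace $z^{\pm}(u)^{-1}=\tfrac1N\mathrm{tr}(L^{\pm}(u)D^{-1}L^{\pm}(q^{-2N}u)^{-1}D)$ together with the transposed formula for $z^{\pm}(u)$. For these the second comatrix identity of Proposition \ref{qdet comatrix}, namely $L^{\pm}(u)^tD\widehat{L}^{\pm}(q^{-2}u)^tD^{-1}={\det}_q(L^{\pm}(u))I$, read off entrywise and then summed along the diagonal, produces exactly $\sum_{i,k}l^{\pm}_{ki}(u)D_k\widehat{L}^{\pm}_{ik}(q^{-2}u)D_i^{-1}=N{\det}_q(L^{\pm}(u))$. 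The main computational device is that, although the matrix entries are noncommuting, the diagonal entries of $D$ are scalars, so relabelling the summation indices $i\leftrightarrow k$ in the trace converts it term by term into this diagonal sum: the $q$-powers coming from $D^{\pm1}$ may be freely commuted past the algebra elements, while the order of the two algebra factors $l^{\pm}$ and $\widehat{L}^{\pm}$ is preserved. The transposed identity for $z^{\pm}(u)$ is handled the same way, now feeding the cancelled $D$'s into $\mathrm{tr}(\widehat{L}^{\pm}(q^{-2}u)^tL^{\pm}(q^{-2N}u)^t)$ and matching it against the first comatrix relation $\widehat{L}^{\pm}(q^{-2}u)L^{\pm}(q^{-2N}u)={\det}_q(L^{\pm}(q^{-2}u))I$; this yields $z^{\pm}(u)={\det}_q(L^{\pm}(q^{-2}u))/{\det}_q(L^{\pm}(u))$, the reciprocal of the first formula as the notation demands.

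The two remaining formulas are the reversed-order traces $\tfrac1N\mathrm{tr}(D^{-1}L^{\pm}(q^{-2N}u)^{-1}DL^{\pm}(u))$ and its transposed analogue, and these are where the argument is genuinely delicate: the trace is not cyclic for matrices with noncommuting entries, so after substituting the comatrix one is left with $\widehat{L}^{\pm}(q^{-2}u)$ standing before $L^{\pm}(u)$ in the wrong order, which the index relabelling alone cannot repair. To close this gap I would first promote the one-sided comatrix relations to two-sided ones: because ${\det}_q(L^{\pm}(u))$ is central and invertible and the matrices $L^{\pm}(u)$ are invertible in $\mathrm{U}_q(\widehat{\gl}_N)$, the identity $L^{\pm}(u)^tD\widehat{L}^{\pm}(q^{-2}u)^tD^{-1}={\det}_q(L^{\pm}(u))I$ forces the reversed identity $D\widehat{L}^{\pm}(q^{-2}u)^tD^{-1}L^{\pm}(u)^t={\det}_q(L^{\pm}(u))I$. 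Reading this reversed relation entrywise and summing on the diagonal produces precisely the sum whose term-by-term match (again using that the $D$-scalars commute while the algebra factors keep their order) with $\mathrm{tr}(D^{-1}\widehat{L}^{\pm}(q^{-2}u)DL^{\pm}(u))$ yields $N{\det}_q(L^{\pm}(u))$; the fourth formula follows identically from the reversed form of the first comatrix relation.

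I expect the main obstacle to be exactly this non-cyclicity of the matrix trace. Establishing the nice-order formulas is routine once the relabelling observation is in place, but the reversed-order formulas require the two-sided comatrix identities, and the care there lies in justifying the passage from a one-sided to a two-sided inverse via centrality and invertibility of the quantum determinant, and in bookkeeping the spectral shifts $u$, $q^{-2}u$, $q^{-2N}u$ correctly so that the determinants telescope into the claimed ratio. Once all four traces are evaluated, dividing by $N$ and by ${\det}_q(L^{\pm}(q^{-2}u))$ delivers the stated identities simultaneously.
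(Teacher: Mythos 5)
Your proposal is correct and follows essentially the same route as the paper's proof: substitute the comatrix for the inverse via Proposition \ref{qdet comatrix}, factor out the central quantum determinant, and evaluate the remaining trace entrywise against the comatrix relations. The only difference is one of explicitness --- the paper dispatches the reversed-order traces with ``by the same argument,'' whereas you spell out the promotion of the one-sided comatrix identities to two-sided ones, which is indeed the detail that makes those cases work.
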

\begin{proof} We compute that
   \begin{align*}
z^{\pm}(u)^{-1}&= \frac{1}{N}tr\left(L^{\pm}T(u) D^{-1} L^{\pm}(q^{-2N}u)^{-1} D\right)\\
&=\frac{1}{N}tr\left(L^{\pm}(u) D^{-1} \hat L^{\pm} (q^{-2}u)D\right)  {\det}_{q}(L^{\pm} (q^{-2}u))^{-1}\\
&=\frac{{\det}_{q}(L^{\pm}(u))}{{\det}_{q}(L^{\pm}(q^{-2}u))}.
  \end{align*}
By the same argument,
  \begin{equation}
    \begin{split}
 \frac{1}{N}tr\left( D^{-1} L^{\pm}(q^{-2N}u)^{-1} D L^{\pm}(u)\right)
  =\frac{{\det}_{q}(L^{\pm}(u))}{{\det}_{q}(L^{\pm}(q^{-2}u))}.
   \end{split}
    \end{equation}

It follows from Proposition \ref{Skl comatrix} that
\begin{equation}
  \begin{split}
    (L^{\pm} (u)^t)^{-1}=D\widehat L^{\pm}(q^{-2}u)^t D^{-1} {\det}_q(L^{\pm}(u))^{-1},\\
 \end{split}
  \end{equation}
 then
    \begin{equation}
        tr\left( D^{-1}(L^{\pm}(u)^t)^{-1}D L^{\pm}(q^{-2N}u)^t\right)
        =tr\left( \widehat L^{\pm}(q^{-2}u)^t L^{\pm}(q^{-2N}u)^t\right).
      \end{equation}
Using the comatrix, we immediately have that
\begin{equation}
  \begin{split}
 \frac{1}{N} tr\left( D^{-1}(L^{\pm}(u)^t)^{-1}D L^{\pm}(q^{-2N}u)^t\right)
 =  \frac{{\det}_{q}(L^{\pm}(q^{-2}u))}{{\det}_{q}(L^{\pm}(u))}=z^{\pm}(u).
 \end{split}
  \end{equation}
The other equation can be proved similarly.
\end{proof}

Denote $Q=P^{t_1}=P^{t_2}=\sum_{i,j=1}^N E_{ij}\otimes E_{ij}$.
\begin{proposition}\label{center z relation} The following relations hold
   \begin{align}\notag
&Q L^{\pm}_1(u) D_2 (L^{\pm}_2(q^{-2N}u)^{-1})^t D_2^{-1}\\
&= D_2 (L^{\pm}_2(q^{-2N}u)^{-1})^t   D_2^{-1} L^{\pm}_1(u)Q=Qz^{\pm}(u)^{-1},\\ \notag
&Q D_1^{-1} (L^{\pm}_1(u)^t)^{-1}D_1 L^{\pm}_2(q^{-2N}u)\\
&= L^{\pm}_2(q^{-2N}u)D_1^{-1}(L^{\pm}_1(u)^t)^{-1}D_1 Q=Qz^{\pm}(u).
  \end{align}
\end{proposition}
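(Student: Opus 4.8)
The plan is to prove the four asserted equalities by first establishing the trace-free (matrix-valued) reflection-type identities in $\mathrm{End}(\mathbb C^N)^{\otimes 2}$ and then reading off the scalar factor $z^{\pm}(u)^{\mp 1}$ from the action of $Q$. The key structural fact I would exploit is that $Q=P^{t_1}=P^{t_2}$ is (up to scalar) a rank-one projector: concretely $Q^2 = N\,Q$ and, more usefully, $Q\,(A\otimes 1)\,Q = Q\,(1\otimes A^t)\,Q$ for any matrix $A$, since transposition in one factor can be shuttled across $Q$. This is precisely the algebraic mechanism that converts an expression of the form $\mathrm{tr}(A)\cdot(\text{scalar})$ into a $Q$-sandwich, and it is what lets Theorem~\ref{qdet z} feed directly into the present proposition.

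First I would treat the first pair of equalities. Consider the operator $X=D_2(L^{\pm}_2(q^{-2N}u)^{-1})^t D_2^{-1}$ and the claim $Q\,L^{\pm}_1(u)\,X = Q\,z^{\pm}(u)^{-1}$. Using $Q(B\otimes 1)=Q(1\otimes B^t)$, I would pull $L^{\pm}_1(u)$ through $Q$ so that both tensor factors act in the second slot, producing $Q\cdot(1\otimes M)$ with $M = (L^{\pm}(q^{-2N}u)^{-1})\,D^{-1}\cdot\text{(transpose of }L^{\pm}(u)\text{)}\,D$ assembled in a single copy of $\mathrm{End}(\mathbb C^N)$ after matching the $D$-conjugations. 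The point is that $Q\cdot(1\otimes M)=\mathrm{tr}(M)\cdot(\text{const})^{-1}Q$ up to the rank-one normalization, and by Theorem~\ref{qdet z} the trace $\tfrac1N\,\mathrm{tr}\bigl(D^{-1}L^{\pm}(q^{-2N}u)^{-1}D\,L^{\pm}(u)\bigr)$ is exactly $z^{\pm}(u)^{-1}$. The second equality in the first pair, with the two operators in the opposite order, follows identically by pulling $L^{\pm}_1(u)$ through $Q$ from the other side and invoking the cyclic form of the same trace, which Theorem~\ref{qdet z} also records. The second pair of equalities is the transpose-inverse analogue: here I would set $Y=D_1^{-1}(L^{\pm}_1(u)^t)^{-1}D_1$ and run the same $Q$-shuttling argument, this time landing on the trace $\tfrac1N\,\mathrm{tr}\bigl(D^{-1}(L^{\pm}(u)^t)^{-1}D\,L^{\pm}(q^{-2N}u)^t\bigr)=z^{\pm}(u)$ from Theorem~\ref{qdet z}.

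The main obstacle I anticipate is bookkeeping of the $D$-conjugations under the $Q$-transposition. Since $Q$ swaps a factor in slot $1$ for its transpose in slot $2$, the diagonal matrices $D_1,D_2$ do not simply cancel: one must verify that $Q\,D_1 = Q\,D_2$ (because $D$ is symmetric, $D^t=D$, so $Q(D\otimes 1)=Q(1\otimes D)$), and then check that every $D_2^{\pm1}$ introduced by the operator $X$ recombines correctly with the $D$'s migrating from slot $1$ so that the net conjugation matches the $D^{-1}(\cdots)D$ pattern appearing in Theorem~\ref{qdet z}. Once the $D$'s are reconciled the identities are immediate, so I would present the $D$-reconciliation as the one lemma-level computation and let the trace identities from Theorem~\ref{qdet z} finish each of the four cases.
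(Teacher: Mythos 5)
There is a genuine gap, and it sits at the heart of your argument: the claim that $Q\,(1\otimes M)$ equals $\mathrm{tr}(M)$ times a constant multiple of $Q$ is false unless $M$ is already a scalar matrix. What is true is only the sandwiched identity $Q\,(1\otimes M)\,Q=\mathrm{tr}(M)\,Q$; indeed $Q\,(1\otimes M)$ sends $e_k\otimes e_l$ to $M_{kl}\sum_i e_i\otimes e_i$, which is a multiple of $Q$ precisely when $M_{kl}=c\,\delta_{kl}$. So your shuttling argument followed by the trace formula of Theorem \ref{qdet z} proves at best $Q\,L^{\pm}_1(u)\,X_2\,Q=N z^{\pm}(u)^{-1}Q$, not the asserted $Q\,L^{\pm}_1(u)\,X_2=z^{\pm}(u)^{-1}Q$. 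To close the gap you must show that the single-slot matrix $M=L^{\pm}(u)^t D\,(L^{\pm}(q^{-2N}u)^{-1})^t D^{-1}$ is literally the scalar matrix $z^{\pm}(u)^{-1}I$; that is the content of Proposition \ref{qdet comatrix} (the comatrix identity $L^{\pm}(u)^tD\,\widehat L^{\pm}(q^{-2}u)^tD^{-1}=\det_q(L^{\pm}(u))I$), which is exactly what the paper invokes at this step, and it cannot be replaced by the trace statement of Theorem \ref{qdet z}.

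The second, related omission is the equality of the two orderings within each display, e.g.\ $Q L^{\pm}_1(u) D_2(L^{\pm}_2(q^{-2N}u)^{-1})^t D_2^{-1}= D_2(L^{\pm}_2(q^{-2N}u)^{-1})^t D_2^{-1}L^{\pm}_1(u)Q$. Since the entries of $L^{\pm}(u)$ and of $(L^{\pm}(q^{-2N}u)^{-1})^t$ do not commute, this is not linear algebra, and pulling $L^{\pm}_1(u)$ through $Q$ from the other side plus ``cyclicity of the trace'' does not deliver it (at best, invertibility lets you pass from $AB=cI$ to $BA=cI$, but that presupposes the scalarity you have not established). In the paper this equality is a consequence of the RTT relation: one conjugates $R(u,v)^{t_2}$ across $(L^{\pm}_2(v)^{-1})^tL^{\pm}_1(u)$ and then specializes to $v=q^{-2N}u$, where by the crossing symmetry \eqref{cross rel} the operator $(R(u,v)^{t_2})^{-1}$ degenerates to a rank-one operator proportional to $D_2QD_2^{-1}$. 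This crossing-point degeneration is the mechanism that produces $Q$ on both sides and equates the two orderings; your proposal never uses the RTT relation or the crossing symmetry, so this part of the statement is not addressed. (Your $D$-bookkeeping remarks, e.g.\ $QD_1=QD_2$ since $D^t=D$, are correct but peripheral.)
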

\begin{proof}
Multiplying both sides of the relation
\begin{equation}\label{RLL}
  \begin{split}
R(u,v) L^{\pm}_1(u)L^{\pm}_2(v)= L^{\pm}_2(v)L^{\pm}_1(u)R(u,v)
 \end{split}
  \end{equation}
  by $L^{\pm}_2(v)^{-1}$
 and taking partial transpose with respect to the second copy, we get that
 \begin{equation}
  \begin{split}
R(u,v)^{t_2}(L^{\pm}_2(v)^{-1})^t L^{\pm}_1(u)=  L^{\pm}_1(u) (L^{\pm}_2(v)^{-1})^tR(u,v)^{t_2}.
 \end{split}
  \end{equation}
Multiply the inverse of $R(u,v)^{t_2}$, the above can be rewritten as
  \begin{equation}
  \begin{split}
    (L^{\pm}_2(v)^{-1})^t L^{\pm}_1(u) (R(u,v)^{t_2})^{-1} = (R(u,v)^{t_2})^{-1} L^{\pm}_1(u) (L^{\pm}_2(v)^{-1})^t.
 \end{split}
  \end{equation}
The take $v=q^{-2N}u$,
  \begin{equation}
  \begin{split}
    D_2 (L^{\pm}_2(q^{-2N}u)^{-1})^t L^{\pm}_1(u)  D_2^{-1}Q  \\
 =  Q D_2  L^{\pm}_1(u) (L^{\pm}_2(q^{-2N}u)^{-1})^t D_2^{-1}.
 \end{split}
  \end{equation}
Note that for any $X$, $QX_1=QX_2^t$, and $X_1Q=X_2^tQ$. Therefore
\begin{equation}
\begin{split}
 Q L^{\pm}_1(u) D_2  (L^{\pm}_2(q^{-2N}u)^{-1})^t   D_2^{-1}
 =Q L^{\pm}_2(u) ^t D_2  (L^{\pm}_2(q^{-2N}u)^{-1})^t   D_2^{-1}\\
 =Q L^{\pm}_2(u)^t D_2  \widehat L^{\pm} _2(q^{-2}u)^t   D_2^{-1}({\det}_{q}L^{\pm}(q^{-2}u))^{-1}\\
 \end{split}
  \end{equation}
  which is $z^{\pm}(u)^{-1}Q$ by Proposition \ref{qdet comatrix}.

Taking transposition of first copy
in the equation \eqref{RLL}
we get that
  \begin{equation}
    L^{\pm}_1(u)^t R^t(u,v)L^{\pm}_2(v)= L^{\pm}_2(v)R^t(u,v)L^{\pm}_1(u)^t,
  \end{equation}
where we abbreviate $R^{t_1}(u, v)$ as $R^{t}(u, v)$.
  Taking inverses of  $L^{\pm}_1(u)^t$ and $R^t(u,v)$, the relation becomes
    \begin{equation}
  \begin{split}
    L^{\pm}_2(v)(L^{\pm}_1(u)^t)^{-1}R^t(u,v) ^{-1} =  R^t(u,v)^{-1} (L^{\pm}_1(u)^t)^{-1} L^{\pm}_2(v) .
 \end{split}
  \end{equation}
  Using the crossing symmetry,
  we have
\begin{align}\notag
    &L^{\pm}_2(v) (L^{\pm}_1(u)^t)^{-1}D_1 R^{-1}(q^{-2N}u,v) ^{t} D_1^{-1}\\
    &=D_1   R^{-1}(q^{-2N}u,v) ^{t} D_1^{-1} (L^{\pm}_1(u)^t)^{-1}L^{\pm}_2(v).
  \end{align}
 Letting $v=q^{-2N}u$, we rewrite it as
\begin{align}\notag
    &L^{\pm}_2(q^{-2N}u) D_1^{-1}  (L^{\pm}_1(u)^t)^{-1}  D_1 Q\\
    &= Q D_1^{-1} (L^{\pm}_1(u)^t)^{-1}D_1 L^{\pm}_2(q^{-2N}u)\\
    &= QL^{\pm}_1(q^{-2N}u)^t D_1^{-1} (L^{\pm}_1(u)^t)^{-1}D_1
  \end{align}
  which is $z^{\pm}(u)Q$ by Theorem \ref{qdet z}.
\end{proof}

\section{Coideal subalgebras of $\mathrm{U}_q(\widehat{\gl}_N)$}
\subsection{Orthogonal twisted $q$-Yangians}
Consider the element $R^{t}(u, v) :=R^{t_{1}}(u,v)$ obtained from $R(u,v)$: %by the transposition in the first factor:
   \begin{equation}
  \begin{split}
R^{t}(u,v)=(u-v)\sum_{i\neq j}E_{ii}\otimes E_{jj}+(q^{-1}u-qv)\sum_{i}E_{ii}\otimes E_{ii}\\
+(q^{-1}-q)u\displaystyle \sum_{i>j}E_{ji}\otimes E_{ji}+(q^{-1}-q)v\sum_{i<j}E_{ji}\otimes E_{ji}.
 \end{split}
  \end{equation}

We define the {\it twisted $q$-Yangian} $\mathrm{Y}_q^{\tw}({\mathfrak{o}}_N)$ as the associative algebra generated by $s_{ij}^{(r)}$,
$1\leq i, j\leq N$ and $r\in\mathbb Z_+$ subject to the following relations.
%The defining relations are
\begin{equation}
  \begin{split}
s_{ij}^{(0)}=0,\ 1\leq i<j\leq N,\qquad
s_{ii}^{(0)}=1,\ 1\leq i\leq N,\\
R(u/v)S_{1}(u)R^{t}(1/uv)S_{2}(v)=S_{2}(v)R^{t}(1/uv)S_{1}(u)R(u/ v),
 \end{split}
  \end{equation}
where
\begin{equation}
\begin{split}
s_{ij}(u)= \sum_{r=0}^{\infty}s_{ij}^{(r)}u^{-r}.
\end{split}
\end{equation}

\begin{theorem}\cite{MRS}\label{embedding thm orth}
The map $S(u)\mapsto L^-(uq^{-c}) L^{+}(u^{-1})^{t}$  defines
an algebra embedding of
$\mathrm{Y}_{q}^{\tw}({ {\mathfrak{o}}}_{N}) \longrightarrow \mathrm{U}_{q} (\widehat{\gl}_{N}) $.
The ordered monomials in the generators   constitute a basis
of $\mathrm{Y}_{q}^{\tw}({\mathfrak{o}}_{N})$.
\end{theorem}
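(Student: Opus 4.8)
The plan is to prove that the assignment $S(u)\mapsto L^-(uq^{-c})L^+(u^{-1})^t$ respects the defining relations of $\mathrm{Y}_q^{\tw}(\mathfrak{o}_N)$ and then separately establish injectivity via a PBW-type basis argument. The verification of relations splits into two parts. First, the initial conditions $s_{ij}^{(0)}=0$ for $i<j$ and $s_{ii}^{(0)}=1$ must hold for the image. Since the constant terms of $L^-(uq^{-c})$ and $L^+(u^{-1})$ are the matrices ${L^-}^{(0)}$ and ${L^+}^{(0)}$, which are lower- and upper-triangular respectively with mutually inverse diagonals by the relations in \eqref{e:defn}, the product ${L^-}^{(0)}({L^+}^{(0)})^t$ is lower-triangular with $1$'s on the diagonal. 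This matches the orthogonal case initial conditions, analogous to the finite-type computation $S=L^-(L^+)^t$ in Theorem \ref{embedding thm orth1}.

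The heart of the argument is verifying the reflection-type RTT relation
\begin{equation}\notag
R(u/v)S_1(u)R^t(1/uv)S_2(v)=S_2(v)R^t(1/uv)S_1(u)R(u/v)
\end{equation}
for $S(u)=L^-(uq^{-c})L^+(u^{-1})^t$. First I would substitute the definition and treat the two tensor factors separately, writing $S_1(u)=L^-_1(uq^{-c})(L^+_1(u^{-1}))^{t_1}$ and $S_2(v)=L^-_2(vq^{-c})(L^+_2(v^{-1}))^{t_2}$. The strategy is to move the $R$-matrices past the $L^{\pm}$ factors using the four RTT relations of \eqref{e:defn}, together with the transposed versions. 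The key technical device is that after taking partial transposes, the relevant commutation of $L^+(u^{-1})^t$ with the other factors is governed by $R^t(1/uv)$ rather than $R$, which is exactly why the transposed $R$-matrix appears in the middle of the reflection equation. One must carefully track the spectral parameters: the $q^{-c}$ shift on the $L^-$ arguments and the inversion $u\mapsto u^{-1}$ on the $L^+$ arguments interact with the central element $q^c$ appearing in the fourth relation of \eqref{e:defn}. The computation is a sequence of RTT moves that rearranges $R\,S_1\,R^t\,S_2$ into $S_2\,R^t\,S_1\,R$, with each move justified by one of the four relations and the Yang-Baxter equation \eqref{YBE} to reorder triple products of $R$-matrices.

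The main obstacle will be bookkeeping the interleaving of $L^+$ and $L^-$ factors across the two tensor slots while respecting partial transposition. Since $S_1$ contains an $L^-$ in slot $1$ and a transposed $L^+$ in slot $1$, and $S_2$ similarly in slot $2$, moving $R(u/v)$ from the far left to the far right requires commuting it successively past $L^-_1$, then past $(L^+_1)^{t_1}$, then interacting with $R^t(1/uv)$, and so on. The transposes convert the standard RTT relation into a crossing-type relation, and one must verify that the spectral argument $1/uv$ of the middle $R^t$ is precisely what makes the two sides agree; this is where the functional equation linking the $L^+$-inverse arguments to the $L^-$-arguments becomes essential. Once the relation is verified, injectivity and the basis claim follow by a standard filtration/associated-graded argument: one passes to the classical limit or to leading coefficients to reduce the PBW basis statement to the already-known finite-type result of Theorem \ref{embedding thm orth1}, showing the ordered monomials in the $s_{ij}^{(r)}$ map to linearly independent elements of $\mathrm{U}_q(\widehat{\gl}_N)$.
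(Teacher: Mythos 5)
The paper does not actually prove this statement; it is quoted verbatim from \cite{MRS}, so there is no internal proof to compare against. Judged on its own terms, your plan for the homomorphism half is the standard and correct one: the triangularity of the constant terms gives the initial conditions, and the reflection equation is verified by threading $R(u/v)$ and $R^t(1/uv)$ through the four RTT relations of \eqref{e:defn} together with their partial transposes and the Yang--Baxter equation. This is exactly the style of computation the paper does carry out explicitly later, in the proof of the mixed reflection relation $R(u,v)\overline S_{1}(u)R^{t}(u^{-1},v)S_{2}(v)=S_{2}(v)R^{t}(u^{-1},v)\overline S_{1}(u)R(u,v)$, so the technique is sound; the only caution is that you must actually check that the $q^{\mp c}$ shifts in the mixed relation $R(uq^{-c}/v)L^+_1(u)L^-_2(v)=L^-_2(v)L^+_1(u)R(uq^c/v)$, combined with the arguments $uq^{-c}$ and $u^{-1}$ in the definition of $S(u)$, produce the \emph{same} spectral argument $1/uv$ in the middle $R^t$ on both sides --- this is precisely where a sign or level error would sink the computation, and your sketch defers it.

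The genuine gap is in the basis claim. You propose to reduce the PBW statement for $\mathrm{Y}_q^{\tw}(\mathfrak{o}_N)$ to the finite-type Theorem \ref{embedding thm orth1} by ``passing to leading coefficients.'' That cannot work as stated: Theorem \ref{embedding thm orth1} only concerns the finitely many degree-zero generators $s_{ij}^{(0)}$ with $i>j$, whereas the affine algebra has infinitely many generators $s_{ij}^{(r)}$, $r\ge 1$, for \emph{all} pairs $(i,j)$, and no specialization to the finite-type coideal can detect linear independence among monomials involving the higher modes. The actual argument (as in \cite{MRS}) has two independent halves: the defining relations are used to show the ordered monomials span, and linear independence is obtained by pushing the monomials through the embedding into $\mathrm{U}_q(\widehat{\gl}_N)$ and invoking the PBW theorem for the quantum affine algebra itself (equivalently, a specialization $q\to 1$ to the twisted loop algebra, where the images have triangular leading terms). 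Note also the logical order: injectivity of the map is not separate from the basis claim --- one first shows the monomials span the abstractly presented algebra, then that their images are linearly independent, and both statements together give the embedding. As written, your final step assumes the very independence it is supposed to establish.
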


As a subalgebra, $\mathrm{Y}_{q}^{\tw}({ {\mathfrak{o}}}_{N})$ becomes a coideal of $\mathrm{U}_{q} (\widehat{\gl}_{N}) $, as%as a subalgebra  of $\mathrm{U}_{q} (\widehat{\gl}_{N}) $,
\begin{equation}
\begin{split}
 \Delta(s_{ij}(u))=\sum_{k,l=1}^{N}l^-_{ik}(u)l^+_{jl}(u^{-1})\otimes s_{kl}(u).
\end{split}
\end{equation}

 Clearly the map $s_{ij}\mapsto s_{ij}^{(0)}$   defines an embedding $\mathrm{U}_{q}^{\tw}( \mathfrak{o}_{N}) \rightarrow \mathrm{Y}_{q}^{\tw}({\mathfrak{o}}_{N})$.
On the other hand, the mapping
\begin{equation}
S(u)\mapsto S+q^{-1}u^{-1} \overline{S}
\end{equation}
defines an algebra homomorphism
$\mathrm{Y}_{q}^{\tw}( \mathfrak{o}_{N})\rightarrow \mathrm{U}_{q}^{\tw}( \mathfrak{o}_{N})$.

We introduce the matrices $\overline{S}(u)=(\overline{s}_{ij}(u))$ by
\begin{equation}
  \overline{S}(u)= L^+(uq^{-c}) {L^-(u^{-1})}^{t} ,
\end{equation}
then one has the following relations,
\begin{equation}
  \begin{split}
     (uq-u^{-1}q^{-1})\overline{s}_{ij}(u)=(uq^{\delta_{ij}}-u^{-1}q^{-\delta_{ij}})s_{ji}(u^{-1})\\
  +(q-q^{-1})(u\delta_{j<i}+u^{-1}\delta_{i<j})s_{ij}(u^{-1}),
\end{split}
\end{equation}
this implies that the coefficients of the series $\overline{s}_{ij}(u)$ generate $\mathrm{Y}_{q}^{\tw}( \mathfrak{o}_{N})$.
\subsection{Symplectic twisted -Yangians}

The {\it Symplectic twisted} $q$-Yangians
$\mathrm{Y}_{q}^{\tw}(\mathfrak{sp}_{N})$ is generated by $s_{ij}^{(r)}$,
with $1\leq i, j\leq N$ and $r$ runs over nonnegative integers,
and $(s_{ii'}^{(0)})^{-1}$ with $i=1,3, \ldots, 2n-1$.
The defining relations are
\begin{equation}
\begin{aligned}
s_{ij}^{(0)}&=0 \text{ for }   i<j \text{ with } j\neq i', \\
s_{i'i'}^{(0)}-q^{2}s_{i'i}^{(0)}s_{ii'}^{(0)}&=q^{3},\text{ for } i=1,3, \ldots, 2n-1 ,\\
s_{ii'}^{(0)}(s_{ii'}^{(0)})^{-1}&=(s_{ii'}^{(0)})^{-1}s_{ii'}^{(0)}=1,\\
R(u/v)S_{1}(u)R ^{t}(1/uv)S_{2}(v)&=S_{2}(v)R ^{t}(1/u v)S_{1}(u)R(u/v),
 \end{aligned}
  \end{equation}
where
\begin{equation}
\begin{split}
s_{ij}(u)=\displaystyle \sum_{r=0}^{\infty}s_{ij}^{(r)}u^{-r}.
\end{split}
\end{equation}

\begin{theorem}\cite{MRS}\label{embedding thm orth}
The map $S(u)\mapsto L^-(uq^{-c}) G L^{+}(u^{-1})^{t}$  defines
an algebra embedding of
$\mathrm{Y}_{q}^{\tw}({\mathfrak{sp}}_{N}) \longrightarrow \mathrm{U}_{q} (\widehat{\gl}_{N}) $.
The ordered monomials in the generators   constitute a basis
of $\mathrm{Y}_{q}^{\tw}({\mathfrak{sp}}_{N})$.
\end{theorem}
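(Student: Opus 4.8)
The plan is to follow the same pattern used in the orthogonal case (Theorem~\ref{embedding thm orth}), since the symplectic twisted $q$-Yangian is defined by the identical reflective RTT equation $R(u/v)S_1(u)R^t(1/uv)S_2(v)=S_2(v)R^t(1/uv)S_1(u)R(u/v)$, with the structural differences confined to the diagonal relations and the presence of the inverse generators $(s_{ii'}^{(0)})^{-1}$. The proof splits naturally into two parts: (i) verifying that the prescribed map $S(u)\mapsto L^-(uq^{-c})GL^+(u^{-1})^t$ is a well-defined algebra homomorphism, i.e. that the image satisfies all the defining relations; and (ii) establishing that it is injective with the ordered monomials forming a basis.

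First I would verify the reflective RTT relation for the image. Setting $S(u)=L^-(uq^{-c})GL^+(u^{-1})^t$, one substitutes into the reflection equation and reduces everything to the four RTT relations \eqref{e:defn} among $L^+$ and $L^-$ together with the Yang–Baxter equation \eqref{YBE} and the intertwining property of the matrix $G$ with the $R$-matrix. The key input is that $G$ (a constant matrix built from the symplectic form) commutes appropriately with $R$ and $R^t$ so that conjugating the $L^\pm$-relations by $G$ produces exactly the reflective equation; this is the same computation as in the orthogonal case with $G$ replacing the identity-type twist, so it goes through verbatim once the relevant $G$-intertwining identity is recorded. I would then check the diagonal relations: the defining relation $s_{i'i'}^{(0)}-q^2 s_{i'i}^{(0)}s_{ii'}^{(0)}=q^3$ and the invertibility $s_{ii'}^{(0)}(s_{ii'}^{(0)})^{-1}=1$ must hold in the image. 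These follow by evaluating the matrix product $L^-GL^{+t}$ at $u\to\infty$ (taking the constant term $r=0$), which recovers the finite-dimensional embedding $S\mapsto L^-G(L^+)^t$ of Theorem~\ref{embedding thm sp1}; the constant terms therefore satisfy precisely the relations \eqref{q-sp} of $\mathrm U_q^{\tw}(\mathfrak{sp}_N)$, and the invertibility of $s_{ii'}^{(0)}$ is inherited from the known invertibility in that finite case.

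For injectivity and the basis statement, I would argue as in \cite{MRS} via a triangularity/filtration argument: equip $\mathrm U_q(\widehat{\gl}_N)$ with its natural PBW-type basis (from the known presentation of the quantum affine algebra) and show that the images of the ordered monomials in the $s_{ij}^{(r)}$ have distinct, linearly independent leading terms with respect to a suitable ordering of the generators. The coideal property $\Delta(s_{ij}(u))=\sum_{k,l}l^-_{ik}(uq^{-c})l^+_{jl}(u^{-1})\otimes s_{kl}(u)$ (analogous to the orthogonal formula) is then checked directly from the coproduct on $L^\pm$, confirming that $\mathrm Y_q^{\tw}(\mathfrak{sp}_N)$ is a left coideal subalgebra. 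The main obstacle will be the basis/linear-independence part rather than the relation-checking: one must track how the inverse generators $(s_{ii'}^{(0)})^{-1}$ and the skew diagonal relations interact with the ordering, so that the leading-term argument still separates monomials; this bookkeeping is more delicate than in the orthogonal case because the diagonal is no longer normalized to $1$ but governed by the quadratic relation $s_{i'i'}^{(0)}-q^2 s_{i'i}^{(0)}s_{ii'}^{(0)}=q^3$, so I would need to fix the total ordering of generators compatibly with this relation and with Theorem~\ref{embedding thm sp1} to ensure the finite-type basis extends consistently to the affine ordered monomials.
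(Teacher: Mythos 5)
You should first be aware that the paper itself gives no proof of this theorem: it is quoted from \cite{MRS}, so there is no internal argument to compare against, and your outline has to stand on its own. On the homomorphism half, your structure is right (verify the reflection equation for the image, then check the extra diagonal relations \eqref{q-sp} on the constant terms via Theorem~\ref{embedding thm sp1}), and the constant-term reduction is fine. But one ingredient is mis-stated in a way that matters: $G$ does \emph{not} commute with $R$ or $R^{t}$, and the verification is not a conjugation of the $L^{\pm}$-relations by $G$. The property actually needed is that the constant matrix $G$ itself satisfies the reflection equation $RG_{1}R^{t}G_{2}=G_{2}R^{t}G_{1}R$, used together with the transposed forms of the RTT relations \eqref{e:defn} to move $L^{+}(u^{-1})^{t}$ past $R^{t}$. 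This is precisely the chain of manipulations the paper does carry out explicitly later, in the proof of the mixed relation \eqref{reflection relation1}, where the step ``since the matrix $G$ satisfies the reflection relation in both cases'' is the crucial input; your phrase ``commutes appropriately'' papers over exactly that point.

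The genuine gap is in the basis half. Asserting that the images of ordered monomials ``have distinct, linearly independent leading terms with respect to a suitable ordering'' is a restatement of the conclusion, not an argument: producing the ordering and computing the leading terms is the entire content. The argument in \cite{MRS} is concrete on both sides of this claim. Spanning is proved by showing that the reflection relation, written out in components, allows any product of generators $s_{ij}^{(r)}$ to be reordered modulo terms that are lower in a suitable filtration (here the localization at the $s_{ii'}^{(0)}$ must be handled, as you note). Linear independence is then obtained by passing to an associated graded (or specialized) algebra: one computes the top-degree terms of the images $L^{-}(uq^{-c})GL^{+}(u^{-1})^{t}$ against the known PBW basis of $\mathrm{U}_{q}(\widehat{\gl}_{N})$ and checks that distinct ordered monomials have distinct nonzero images there. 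Without at least identifying this filtration and performing the spanning step, the second sentence of the theorem is unproven; as written, your proposal establishes (modulo the $G$-identity above) only that the map is a well-defined homomorphism.
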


Regarding  $\mathrm{Y}_{q}^{\tw}({\mathfrak{sp}}_{N})$ as subalgebra  of $\mathrm{U}_{q} (\widehat{\gl}_{N}) $, we also have
\begin{equation}
\begin{split}
 \Delta(s_{ij}(u))=\sum_{k,l=1}^{N}t_{ik}(u)\overline{t}_{jl}(u^{-1})\otimes s_{kl}(u).
\end{split}
\end{equation}

 The map $s_{ij}\mapsto s_{ij}^{(0)}, s_{ii'}^{-1}\mapsto(s_{ii}^{(0)})^{-1}$   defines an embedding $\mathrm{U}_{q}^{\tw}( \mathfrak{sp}_{N}) \rightarrow \mathrm{Y}_{q}^{\tw}({\mathfrak{sp}}_{N})$. On the other hand,
the mapping
\begin{equation}
S(u)\mapsto S+qu^{-1} \overline{S},\qquad (s_{ii}^{(0)})^{-1} \mapsto s_{ii'}^{-1}
\end{equation}
defines an algebra homomorphism
$\mathrm{Y}_{q}^{\tw}( \mathfrak{sp}_{N})\rightarrow \mathrm{U}_{q}^{\tw}( \mathfrak{sp}_{N})$.

We introduce the matrices $\overline{S}(u)=(\overline{s}_{ij}(u))$ by
\begin{equation}
  \overline{S}(u)= L^+(uq^{-c}) G {L^-(u^{-1})}^{t} ,
\end{equation}
then one has the following relations,
\begin{equation}
  \begin{split}
    (u^{-1}q-uq^{-1})\overline{s}_{ij}(u)=(uq^{\delta_{ij}}-u^{-1}q^{-\delta_{ij}})s_{ji}(u^{-1})\\
    +(q-q^{-1})(u\delta_{i<j}+u^{-1}\delta_{j<i})s_{ij}(u^{-1}),
\end{split}
\end{equation}
which implies that the coefficients of the series $\overline{s}_{ij}(u)$ also generate
$\mathrm{Y}_{q}^{\tw}( \mathfrak{sp}_{N})$.

\subsection{ Sklyanin determinant}
From now on, the symbol $\mathrm{Y}_{q}^{\tw}(\g_N)$ will denote either $\mathrm{Y}_{q}^{\tw}(\mathfrak{o}_{N})$ or $\mathrm{Y}_{q} (\mathfrak{sp}_{N})$ (with $N=2n$ in the latter).
For convenience, we introduce the following $R$-matrix:
\begin{equation}
  \overline{R}(x)=\frac{R(x,1)}{x-1}.
\end{equation}
%then the reflection relation can be written as
%\begin{equation}
%  \begin{split}
%R(u/v)S_{1}(u)\ol R^{t}(1/uv)S_{2}(v)=S_{2}(v)\ol R^{t}(1/uv)S_{1}(u)R(u/v).
% \end{split}
%  \end{equation}

For any permutation $i_1,i_2,\dots,i_m$ of $1,2,\dots,m$ we denote
\begin{align}\notag
&<S_{i_1}, S_{i_2}, \cdots , S_{i_m}>\\
&=S_{i_1}(u_{i_1}) (\ol R^{t}_{i_1i_2}\cdots \ol R^{t}_{i_1i_m})
S_{i_2}(u_{i_2}) (\ol R^{t}_{i_2 i_3}\cdots \ol R^{t}_{i_2i_m})
\cdots S_{i_m}(u_{i_m}),
\end{align}
where $\ol R^t_{i_ai_b}=\ol R^t_{i_ai_b}(u^{-1}_{i_a}u^{-1}_{i_b})$.
The following relation in the algebra
$\mathrm{Y}_{q}^{\tw}(\g_N)\otimes(\mathrm{End}\mathbb{C}^{N})^{\otimes m}$ follows from the reflection relation
\begin{equation}
\begin{split}
 R(u_{1}, \ldots, u_{m})\langle S_{1}, S_{2}, \cdots , S_{m}\rangle
 =\langle S_{m}, S_{m-1}, \cdots , S_{1}\rangle R(u_{1},\ldots, u_{m}).
 \end{split}
\end{equation}

Specializing $u_{i}=uq^{-2i+2}$, we have that (cf. \eqref{e:q-anti})
\begin{equation}
\begin{split}
A_{m}^{q} \langle S_{1}, S_{2}, \cdots , S_{m}\rangle
=\langle S_{m}, S_{m-1}, \cdots , S_{1}\rangle  A_{m}^{q}.
 \end{split}
\end{equation}

This element can be written as
\begin{equation}
\frac{1}{m!}\sum s^{i_1\cdots i_m}_{j_1\cdots j_m}(u)\otimes e_{i_1 j_1}\otimes \dots \otimes e_{i_mj_m},
\end{equation}
where the sum is taken over all $i_k,j_k\in \{1,2,\cdots, N\}$.
We call $s^{i_1\cdots i_m}_{j_1\cdots j_m}(u)$
the {\it Sklyanin minor} associated to the rows $i_1\cdots i_m$ and the columns  $j_1\cdots j_m$. %determinant}.

Clearly if $i_k=i_l$ or $j_k=j_l$ for $k\neq l$, then $s^{i_1\cdots i_m}_{j_1\cdots j_m}(u)=0$.
Suppose that $i_1<i_2<\dots<i_m$ and $j_1<j_2<\dots<j_m$. Then the Sklyanin minors %determinants
satisfy the relations:
\begin{equation}
\begin{split}
s^{i_{\sigma(1)}\cdots i_{\sigma(m)}}_{j_{\tau(1)}\cdots j_{\tau(m)}}(u)
=(-q)^{-l(\sigma)-l(\tau)}
s^{i_1\cdots i_m}_{j_1\cdots j_m}(u).
\end{split}
\end{equation}
Note that $s^i_j=s_{ij}(u)$. In particular,
$s^{1\cdots N}_{1\cdots N}(u)$ is called the {\it Sklyanin determinant} and will be denoted by $\sdet_q S(u)$.

The following theorem provides an expression of  $\sdet_q S(u)$ in terms of the quantum determinants.
\begin{theorem}\cite{MRS} We have
\begin{equation}
 \sdet_q S(u)=\gamma_{N}(u){\det}_q L^-(uq^{-c}) {\det}_q L^+(q^{2N-2}u^{-1}),
\end{equation}
where
\begin{equation}
\gamma_N(u)= \left \{
  \begin{aligned}
   &1,   &\text{Case }(\mathfrak{o}_N),\\
    &\frac{q^{n-2}-q^{n}u^{2}}{q^{2n-2}-q^{-2n}u^2}, &\text{Case }(\mathfrak{sp}_N).\\
  \end{aligned}
\right.
\end{equation}

\end{theorem}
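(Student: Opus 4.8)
The plan is to compute $\sdet_q S(u)$ directly from its definition as the coefficient $s^{1\cdots N}_{1\cdots N}(u)$ occurring in the antisymmetrized product $A_N^q\langle S_1,\ldots,S_N\rangle$ under the specialization $u_i=uq^{-2i+2}$, and then to substitute the embedding $S(u)\mapsto L^-(uq^{-c})L^+(u^{-1})^t$ of Theorem \ref{embedding thm orth} (with the extra factor $G$ inserted in the symplectic case). After substitution, each tensor slot $i$ carries the pair $L^-_i(u_iq^{-c})\,(L^+_i(u_i^{-1}))^t$, while the interleaving $\ol R^t$-matrices separate consecutive slots. The goal is to disentangle this into a left block built only from the $L^-$-factors and a right block built only from the transposed $L^+$-factors, so that the $q$-antisymmetrizer can be made to act on each block independently.

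The heart of the argument is the disentangling step. First I would push every transposed factor $(L^+_i(u_i^{-1}))^t$ to the right of all the $L^-_j$-factors. Since $(L^+_i)^t$ and $L^-_j$ act on distinct tensor slots, moving them past one another costs only a conjugation by the mixed $R$-matrices appearing in the defining relations of $\mathrm{U}_q(\widehat{\gl}_N)$, and these conjugations are compensated by the interleaved $\ol R^t$-matrices together with the crossing symmetry \eqref{cross rel}. The delicate point is that partial transposition is not an anti-automorphism, so each move must be justified by applying \eqref{cross rel} in exactly the right tensor factor; this is where the conjugations by $D$ enter. After this reorganization the $L^-$-factors stand on the left in the increasing-argument order $L^-_1(uq^{-c})\cdots L^-_N(q^{2-2N}uq^{-c})$, while the $(L^+)^t$-factors stand on the right in the reversed order, still interleaved by residual $\ol R^t$-matrices.

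Next I would apply $A_N^q$. On the left block the relation $A_N^q L^-_1(v)\cdots L^-_N(q^{2-2N}v)=\det_q L^-(v)\,A_N^q$ collapses the product to $\det_q L^-(uq^{-c})$. On the right block the residual $\ol R^t$-matrices, via the crossing symmetry, recombine the antisymmetrizer into its $D$-conjugated transpose acting on the second tensor slots, which collapses the reversed product of transposed $L^+$-factors into a single quantum determinant. Reading the $L^+$-arguments $u^{-1},q^2u^{-1},\ldots,q^{2N-2}u^{-1}$ in reverse identifies the base point as $q^{2N-2}u^{-1}$, so the right block yields $\det_q L^+(q^{2N-2}u^{-1})$, matching the claimed formula.

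It remains to collect the scalar. All the normalization constants hidden in $\ol R(x)=R(x,1)/(x-1)$ must be tracked through the specialization and compared against the normalization $R(x)=f(x)\wt R(x)$ used to define $A_N^q$ in \eqref{e:q-anti}; in the orthogonal case these cancel completely and $\gamma_N(u)=1$. In the symplectic case the matrix $G$ is not proportional to an orthogonal matrix, so when the antisymmetrizer contracts the interleaved $G$-factors against the spectral-parameter-dependent $\ol R^t$-matrices it produces a nontrivial, $u$-dependent scalar, whose evaluation gives the rational function $(q^{n-2}-q^nu^2)/(q^{2n-2}-q^{-2n}u^2)$. I expect the main obstacle to be exactly this bookkeeping: disentangling the $L^+$-block through the non-involutive partial transpose using \eqref{cross rel}, and, in the symplectic case, computing the scalar contributed by the $G$-contraction, which is precisely what distinguishes $\gamma_N(u)$ from $1$.
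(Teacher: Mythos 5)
The paper does not actually prove this theorem: it is quoted from \cite{MRS} with no argument supplied, so there is no in-paper proof to compare against. Your outline is the strategy used in \cite{MRS} (and in the analogous twisted-Yangian computation in Molev's book): substitute $S_i=L^-_i(u_iq^{-c})G_i(L^+_i(u_i^{-1}))^t$ into $A_N^q\langle S_1,\dots,S_N\rangle$, commute all the $L^-$-factors to the left, collapse them to ${\det}_qL^-(uq^{-c})$, and evaluate what remains. So the architecture is right. Two points in your write-up, however, are not.

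First, the disentangling does not rest on the crossing symmetry \eqref{cross rel}. Crossing symmetry is a statement about inverses of partially transposed $R$-matrices and is nowhere needed here; what moves $(L^+_1(u_1^{-1}))^t$ past $L^-_2(u_2q^{-c})$ is the $t_1$-transpose of the mixed relation in \eqref{e:defn}, namely $(L^+_1(u))^tR(uq^{-c}/v)^{t_1}L^-_2(v)=L^-_2(v)R(uq^{c}/v)^{t_1}(L^+_1(u))^t$, whose left-hand $R$-matrix is exactly (proportional to) the interleaved $\ol R^t_{12}(1/u_1u_2)$. Note that this exchange shifts the spectral parameter of the $R^t$-factor by $q^{2c}$; tracking where these shifts accumulate (they must end up only in the middle block, not in the two determinants) is part of the bookkeeping you defer, and your appeal to $D$-conjugations in this step does not correspond to an actual manipulation. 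Second, and more seriously, the entire content of the symplectic case is the evaluation of $A_N^q$ against the residual middle block $\prod_{i<j}\ol R^t_{ij}$ interleaved with the $G_i$'s (concretely, its matrix coefficient on $e_1\otimes\cdots\otimes e_N$ against a suitable permuted basis vector, which is how one extracts $s^{1\cdots N}_{1\cdots N}(u)$ rather than by ``recombining the antisymmetrizer into its $D$-conjugated transpose''). Your proposal asserts that this evaluation ``gives the rational function'' $\gamma_N(u)$ but does not compute it; since $\gamma_N(u)=1$ versus $\gamma_N(u)\neq 1$ is precisely what the theorem asserts beyond the factorization, the argument as written establishes only the shape $\sdet_qS(u)=\gamma(u)\,{\det}_qL^-(uq^{-c})\,{\det}_qL^+(q^{2N-2}u^{-1})$ for some undetermined scalar series $\gamma(u)$, not the stated value.
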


We introduce the elements $c_{k}\in \mathrm{Y}_{q}^{\tw}(\mathfrak g_N)$ as the coefficients of
the following series $c(u)$:
 \begin{equation}
c(u)=\gamma_{N}(u)^{-1}  \sdet_q  S(u)=1+ \sum_{k=1}^{\infty}c_{k}u^{-k}.
 \end{equation}

 \begin{corollary}\cite{MRS}
 The coefficients $c_{k}, k\geq 1$
  belong to the center of the algebra $\mathrm{Y}_{q}^{\tw}(\mathfrak g_N).$
  \end{corollary}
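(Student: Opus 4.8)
The plan is to prove directly that the series $\sdet_q S(u)$ commutes with every generator of $\mathrm{Y}_q^{\tw}(\mathfrak{g}_N)$. Since $\gamma_N(u)$ is a scalar series in $u$, this is equivalent to the centrality of each coefficient $c_k$, the normalization by $\gamma_N(u)^{-1}$ serving only to fix the constant term to $1$. As the algebra is generated by the coefficients of the entries $s_{ij}(v)$, it suffices to show that $\sdet_q S(u)$ commutes with an auxiliary matrix $S_0(v)$, where the subscript $0$ labels an extra copy of $\mathrm{End}\,\mathbb{C}^N$ adjoined to the $N$ copies used to build the determinant.

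First I would record the identity $A_N^q\langle S_1,\ldots,S_N\rangle = \sdet_q S(u)\,A_N^q$, valid at the specialization $u_i = uq^{-2i+2}$. This follows from the rank-one property of $A_N^q$ on $(\mathbb{C}^N)^{\otimes N}$ together with the intertwining relation $A_m^q\langle S_1,\ldots,S_m\rangle = \langle S_m,\ldots,S_1\rangle A_m^q$ recorded above, and it exhibits $\sdet_q S(u)$ as the scalar through which the antisymmetrized chain acts. Next I would propagate $S_0(v)$ through the chain $\langle S_1,\ldots,S_N\rangle$ by iterating the reflection equation $R(u/v)S_1(u)R^{t}(1/uv)S_2(v) = S_2(v)R^{t}(1/uv)S_1(u)R(u/v)$ once per factor, producing an identity of the form
\[
\langle S_1,\ldots,S_N\rangle\,\mathcal{R}\,S_0(v) = S_0(v)\,\mathcal{R}'\,\langle S_1,\ldots,S_N\rangle,
\]
in which $\mathcal{R}$ and $\mathcal{R}'$ are explicit ordered products of the matrices $R_{0i}(\cdots)$ and $R^{t}_{0i}(\cdots)$ coupling space $0$ to the spaces $1,\ldots,N$. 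Multiplying on the appropriate side by $A_N^q$ and using the previous step replaces the chain by $\sdet_q S(u)$, leaving the simplification of $A_N^q\mathcal{R}$ and $\mathcal{R}'A_N^q$ as the remaining task.

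The main obstacle is precisely this simplification. Here I would invoke the crossing symmetry relations \eqref{cross rel} and the unitarity \eqref{inverse of R}, together with the fact that at the values $u_i = uq^{-2i+2}$ the product $R(u_1,\ldots,u_N)$ collapses onto $A_N^q$ (cf. \eqref{e:q-anti}). The key point to verify is that, after contraction with $A_N^q$, the two $R$-matrix strings $\mathcal{R}$ and $\mathcal{R}'$ reduce to one and the same scalar function of $u$ and $v$, the diagonal $D_0$-contributions coming from the crossing relations cancelling between the two sides. Once this matching is checked these factors drop out, leaving the bare commutation $\sdet_q S(u)\,S_0(v) = S_0(v)\,\sdet_q S(u)$ and hence the centrality of every $c_k$.

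An alternative, less self-contained route would start from the factorization $\sdet_q S(u) = \gamma_N(u)\,{\det}_q L^-(uq^{-c})\,{\det}_q L^+(q^{2N-2}u^{-1})$ of the preceding theorem and deduce centrality inside the coideal from the commutation properties of the quantum determinants $\det_q L^{\pm}$ with $L^{\pm}(v)$. The difficulty there is the bookkeeping of the $q^c$-shifts appearing in the mixed $RLL$ relation of \eqref{e:defn}, which must conspire with the shifts $uq^{-c}$ and $q^{2N-2}u^{-1}$ so that the quasi-commutation factors cancel exactly; it is this extra bookkeeping that makes the reflection-equation argument the preferable one.
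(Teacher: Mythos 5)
Your preferred route (propagating $S_0(v)$ through $\langle S_1,\ldots,S_N\rangle$ via the reflection equation and then contracting with $A_N^q$) leaves its crux unproved. The step you flag as ``the key point to verify'' --- that after contraction with $A_N^q$ the two strings $\mathcal{R}$ and $\mathcal{R}'$ of matrices $R_{i0}$ and $R^{t}_{i0}$ collapse to one and the same scalar --- is the entire content of that argument, and it is not a formality: the fused product $A_N^q R_{10}(u_1/v)\cdots R_{N0}(u_N/v)$ and the fused product of the transposed factors $R^{t}_{i0}(1/u_iv)$ produce \emph{different} scalar functions (the latter via crossing symmetry, which is exactly where the $D_0$-conjugations and the $\gamma_N$-type factors enter), and one must actually compute their ratio to see that it is $1$. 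Moreover, even writing down the intermediate identity $\langle S_1,\ldots,S_N\rangle\,\mathcal{R}\,S_0(v)=S_0(v)\,\mathcal{R}'\,\langle S_1,\ldots,S_N\rangle$ requires commuting the $R^{t}_{i0}$ factors past the internal $\ol R^{t}_{ij}$ factors of the chain, which needs the transposed Yang--Baxter relations of the type recorded in Lemma \ref{A H relation}; you do not address this. As it stands, the proposal is a plan rather than a proof.

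The route you relegate to an ``alternative'' is in fact the proof (this corollary is quoted from \cite{MRS}, and in both \cite{MRS} and the rational analogue in \cite{M} it is obtained exactly this way): the corollary is immediate from the factorization $\sdet_q S(u)=\gamma_N(u)\,{\det}_q L^-(uq^{-c})\,{\det}_q L^+(q^{2N-2}u^{-1})$ stated just above it, because the coefficients of ${\det}_q L^{\pm}(u)$ are central in $\mathrm{U}_q(\widehat{\gl}_N)$ at level zero, and the twisted $q$-Yangian is embedded there as a subalgebra; hence $c(u)=\gamma_N(u)^{-1}\sdet_q S(u)$ commutes with everything in the ambient algebra, in particular with all $s_{ij}(v)$. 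The ``bookkeeping of $q^c$-shifts'' you worry about disappears once one notes (as the paper does) that the twisted $q$-Yangians are regarded inside the level-$c=0$ quotient, where the mixed $RLL$ relation carries no $q^{\pm c}$ asymmetry and the quantum determinants are genuinely central. You should either carry out that two-line argument, or supply the missing fusion/crossing computation in your main route.
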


\begin{proposition}

The coefficients of the Sklyanin determinants  $\sdet_q(S+q^{-1}u^{-1}\overline{S})$   and
$\sdet_q(S+qu^{-1}\overline{S})$   are central elements in the algebras $\mathrm{U}_{q}^{\tw}(\mathfrak{o} _{N})$ and $\mathrm{U}_{q}^{\tw}(\mathfrak{sp} _{N})$  respectively.

\end{proposition}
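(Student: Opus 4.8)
The plan is to derive the statement from the centrality of the Sklyanin determinant over the twisted $q$-Yangian, established in the preceding Corollary, by pushing it forward along the surjective homomorphisms $\mathrm{Y}_q^{\tw}(\mathfrak{g}_N)\to\mathrm{U}_q^{\tw}(\mathfrak{g}_N)$ recorded earlier in this section. Write $\phi$ for the map $S(u)\mapsto S+q^{-1}u^{-1}\overline{S}$ in the orthogonal case and for the map $S(u)\mapsto S+qu^{-1}\overline{S}$, $(s_{ii}^{(0)})^{-1}\mapsto s_{ii'}^{-1}$ in the symplectic case, each of which was stated to be an algebra homomorphism onto the corresponding $\mathrm{U}_q^{\tw}(\mathfrak{g}_N)$.

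First I would check that $\phi$ is surjective. Reading off the constant term of $\phi(S(u))$ gives $\phi(s_{ij}^{(0)})=s_{ij}$, so every generator $s_{ij}$ lies in the image; in the symplectic case the inverse generators $s_{ii'}^{-1}$ are hit as well. Since a surjective algebra homomorphism sends central elements to central elements, it suffices to realize $\sdet_q(S+q^{-1}u^{-1}\overline{S})$ and $\sdet_q(S+qu^{-1}\overline{S})$ as $\phi$-images of central elements of $\mathrm{Y}_q^{\tw}(\mathfrak{g}_N)$.

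The natural candidate is $\sdet_q S(u)$ itself. By the Corollary the coefficients of $c(u)=\gamma_N(u)^{-1}\sdet_q S(u)$ are central, and since $\gamma_N(u)$ is a scalar series not involving the algebra, the coefficients of $\sdet_q S(u)=\gamma_N(u)c(u)$ are central as well. What then remains is the functoriality of the Sklyanin determinant: the element $\sdet_q S(u)$ is assembled from the $q$-antisymmetrizer $A_N^q$, the fixed matrices $\overline{R}^t$, and the ordered product $\langle S_1,\dots,S_N\rangle$, all of which act on the $\mathrm{End}\,\mathbb{C}^N$ tensor factors, while $\phi$ touches only the coefficient algebra. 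Hence $\phi$ commutes with the entire construction, yielding $\phi(\sdet_q S(u))=\sdet_q(\phi(S)(u))$; substituting $\phi(S)(u)=S+q^{-1}u^{-1}\overline{S}$ (respectively $S+qu^{-1}\overline{S}$) produces exactly the two Sklyanin determinants in the statement.

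The main point to verify carefully, although it is ultimately routine, is this functoriality together with the well-definedness of the right-hand sides: one needs the matrices $S+q^{-1}u^{-1}\overline{S}$ and $S+qu^{-1}\overline{S}$ to satisfy the reflection equation so that their Sklyanin determinants are defined at all. This is automatic, since each such matrix is the $\phi$-image of $S(u)$ and $\phi$ is an algebra homomorphism, so the reflection relation is preserved under $\phi$. Assembling functoriality, surjectivity, and the centrality over $\mathrm{Y}_q^{\tw}(\mathfrak{g}_N)$ then gives the claim in both the orthogonal and the symplectic case.
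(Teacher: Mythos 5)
Your argument is correct and is precisely the intended one: the paper states this proposition without proof, immediately after recording the homomorphisms $\mathrm{Y}_{q}^{\tw}(\g_N)\rightarrow \mathrm{U}_{q}^{\tw}(\g_N)$, $S(u)\mapsto S+q^{-1}u^{-1}\overline{S}$ (resp.\ $S+qu^{-1}\overline{S}$), and the centrality of the coefficients of $\gamma_N(u)^{-1}\sdet_q S(u)$, so pushing that central series forward along the surjection --- using, as you note, that $\gamma_N(u)$ is a scalar series and that the Sklyanin determinant construction commutes with homomorphisms of the coefficient algebra --- is exactly the argument being invoked. I see no gaps in your write-up.
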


We define the auxiliary minor
$\check s^{i_1,\dots,i_m}_{j_1,\dots,j_{m-1},c}(u)$
by
\begin{equation}\begin{split}
&m! A_m ^q \langle S_1,\dots,S_{m-1} \rangle
\ol R^{t}_{1m}( {u_1^{-1}u_m^{-1}})\cdots \ol R^{t}_{m-1,m}( {u_{m-1}^{-1}u_m^{-1}})\\
&=
\sum \check s^{i_1\cdots i_m}_{j_1\cdots j_{m-1},c}(u) \otimes e_{i_1 j_1}\otimes \dots \otimes e_{i_m c},
\end{split}
\end{equation}
where the sum is taken over all $i_k,j_k,c\in \{1,2,\cdots, N\}$.

If $i_k=i_l$ or $j_k=j_l$ for $k\neq l$, then $\check s^{i_1\cdots i_m}_{j_1\cdots j_m-1,c}(u)=0$.
Suppose $i_1<i_2<\dots<i_m$ and  $j_1<j_2<\dots<j_{m-1}$, then
\begin{equation}
\begin{split}
\check s^{i_{\sigma(1)}\cdots i_{\sigma(m)}}_{j_{\tau(1)}\cdots j_{\tau(m-1)},c}(u)
=(-q)^{l(\sigma)-l(\tau)}\check s^{i_1\cdots i_m}_{j_1\cdots j_{m-1},c}(u).
\end{split}
\end{equation}
$\sigma\in S_m, \tau\in S_{m-1}$.

\begin{lemma} For $i_1<i_2<\dots<i_m$ and  $j_1<j_2<\dots<j_{m-1}$,
\begin{equation}
s ^{i_1,\dots,i_m}_{j_1,\dots,j_m}(u)
=\sum_{c=1}^{N}
\check s^{i_1,\dots,i_m}_{j_1,\dots,j_{m-1},c}(u)s_{c j_m}(uq^{2-2m}).
\end{equation}
\end{lemma}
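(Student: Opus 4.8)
The plan is to peel the last matrix $S_m(u_m)$ off the expression $\langle S_1,\cdots,S_m\rangle$ that defines the Sklyanin minor, and to recognize the block that remains as the auxiliary minor. The first step is to establish the factorization
\begin{equation}\label{plan:factor}
\langle S_1,S_2,\cdots,S_m\rangle
=\langle S_1,\cdots,S_{m-1}\rangle\,
\ol R^{t}_{1m}\,\ol R^{t}_{2m}\cdots \ol R^{t}_{m-1,m}\,S_m(u_m),
\end{equation}
where $\ol R^t_{km}=\ol R^t_{km}(u_k^{-1}u_m^{-1})$. In the definition of $\langle S_1,\cdots,S_m\rangle$ each factor $\ol R^t_{km}$ is the last entry of the $k$-th block $\ol R^t_{k,k+1}\cdots\ol R^t_{km}$ sitting immediately to the right of $S_k(u_k)$, so \eqref{plan:factor} amounts to collecting all of these $m$-involving factors at the far right.

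To do so I would move them rightward one block at a time. Once $\ol R^t_{1m},\dots,\ol R^t_{k-1,m}$ have been gathered, each of them acts on the $m$-th copy together with one copy among $1,\dots,k-1$, and hence commutes with $S_k(u_k)$ (which acts on copy $k$) and with every $\ol R^t_{kl}$, $l<m$ (which acts on copies $k$ and $l$), because the tensor legs involved are disjoint. The essential point is that no two of the gathered factors $\ol R^t_{im}$ ever need to pass one another, so I never have to invoke a braiding relation between two $R$-matrices that share the $m$-th copy; the rearrangement uses only locality. Sliding all the factors to the slot just before $S_m(u_m)$ gives \eqref{plan:factor}. This is the step carrying the real content, although it is in the end only tensor-leg bookkeeping.

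Next I would apply the $q$-antisymmetrizer $A_m^q$ to \eqref{plan:factor} on the left. The left-hand side becomes $\frac{1}{m!}\sum s^{i_1\cdots i_m}_{j_1\cdots j_m}(u)\otimes e_{i_1j_1}\otimes\cdots\otimes e_{i_mj_m}$ by definition of the Sklyanin minors, while the block $A_m^q\langle S_1,\cdots,S_{m-1}\rangle\,\ol R^t_{1m}\cdots\ol R^t_{m-1,m}$ is, by definition, $\frac{1}{m!}\sum\check s^{i_1\cdots i_m}_{j_1\cdots j_{m-1},c}(u)\otimes e_{i_1j_1}\otimes\cdots\otimes e_{i_mc}$. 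Writing $S_m(u_m)=\sum_{p,b}s_{pb}(u_m)\otimes 1^{\otimes(m-1)}\otimes e_{pb}$ for its action on the $m$-th copy and using $e_{i_mc}e_{pb}=\delta_{cp}e_{i_mb}$ in the last tensor slot, the right-hand side becomes $\frac{1}{m!}\sum_c\check s^{i_1\cdots i_m}_{j_1\cdots j_{m-1},c}(u)\,s_{cb}(u_m)\otimes e_{i_1j_1}\otimes\cdots\otimes e_{i_mb}$. Comparing the coefficient of $e_{i_1j_1}\otimes\cdots\otimes e_{i_mj_m}$ and relabelling $b=j_m$ yields
\begin{equation}
s^{i_1\cdots i_m}_{j_1\cdots j_m}(u)=\sum_{c=1}^{N}\check s^{i_1\cdots i_m}_{j_1\cdots j_{m-1},c}(u)\,s_{cj_m}(u_m).
\end{equation}
Finally, the specialization $u_k=uq^{-2k+2}$ used to produce $A_m^q$ from $R(u_1,\dots,u_m)$ gives $u_m=uq^{2-2m}$, exactly the shift appearing in the statement; the ordering hypotheses $i_1<\cdots<i_m$ and $j_1<\cdots<j_{m-1}$ merely fix the normalization in which $s$ and $\check s$ are defined, with $j_m$ left free.

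I expect the only real obstacle to be bookkeeping: making the block-by-block sliding in \eqref{plan:factor} precise while tracking the spectral parameters, and keeping the factor $1/m!$ and the summation index $c$ consistent between the two definitions. Once \eqref{plan:factor} is in place, the remainder is a direct coefficient comparison.
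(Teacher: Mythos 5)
Your proposal is correct and is essentially the paper's own proof: the paper's argument consists precisely of the factorization $A_m^q\langle S_1,\dots,S_m\rangle=A_m^q\langle S_1,\dots,S_{m-1}\rangle\,\ol R^{t}_{1m}\cdots\ol R^{t}_{m-1,m}\,S_m(uq^{2-2m})$, followed by the same coefficient comparison against the definitions of the two minors. Your additional observation that the factors $\ol R^t_{im}$ slide to the right without ever having to cross one another (so only disjointness of tensor legs is used, no Yang--Baxter relation) is exactly the bookkeeping the paper leaves implicit.
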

\begin{proof} The identity follows from the formula
\begin{equation}
  \begin{split}
A_m^q \langle S_1,\dots,S_m \rangle
=A_m^q \langle S_1,\dots,S_{m-1} \rangle
\ol R^{t}_{1m}( {u_1^{-1}u_m^{-1}})
\\
\cdots \ol R^{t}_{m-1,m}( {u_{m-1}^{-1}u_m^{-1}})S_m(uq^{2-2m}).
\end{split}
\end{equation}
\end{proof}

For any $1\leq i,j\leq N$ we define the  elements $s^\sharp_{ij}(u)$ as follows

\begin{equation}
  \begin{split}
    s^\sharp_{ij}(u)=\left \{
    \begin{aligned}
      \frac{u^{-1}-u }{qu^{-1}-q^{-1}u}s_{ij}(u)+ \frac{(q-q^{-1})u^{-1}}{qu^{-1}-q^{-1}u }s_{ji}(u ),   i<j,\\
      \frac{u^{-1}-u}{qu^{-1}-q^{-1}u}s_{ij}(u)+\frac{(q-q^{-1})u}{qu^{-1}-q^{-1}u}s_{ji}(u),  i>j\\
       s_{ii}(u) ,   i=j.\\
    \end{aligned}
  \right.
  \end{split}
  \end{equation}
Note that in the orthogonal case, $s^\sharp_{ij}(u)=\ol s_{ji}(u^{-1})$.

\begin{proposition}\label{expansion auxiliary det}
Suppose $i_1<i_2<\dots<i_{m-1}$, $j_2<\dots,j_{m-1}$,
$j_1\in \{i_1,\dots, i_m\}$ and $c\notin \{j_2,\dots,j_{m-1}\}$.
Then
\begin{equation}
\begin{split}
\check s^{i_1\cdots i_m}_{j_1\cdots j_{m-1},c}(u)&=0, \qquad \text{ if } c\notin \{i_1,\dots, i_m\},  \\
\check s^{i_1\cdots i_m}_{j_1\cdots j_{m-1},c}(u)&=
\sum_{r=1}^{m-1}
(-q)^{1-r}s^\sharp_{i_r,j_1}(u)
 s^{i_1,\dots,\hat{i_r},\dots,i_{m-1}}_{j_2,\dots,j_{m-1}}(uq^{-2}),
\end{split}
\end{equation}
if $c=i_m$.

\end{proposition}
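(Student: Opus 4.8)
The plan is to peel off the first reflection factor and set up a recursion in $m$. Writing $\langle S_1,\dots,S_{m-1}\rangle = S_1(u_1)(\ol R^{t}_{12}\cdots \ol R^{t}_{1,m-1})\langle S_2,\dots,S_{m-1}\rangle$ and using that $\ol R^{t}_{1m}$ acts only on the copies $1$ and $m$, hence commutes with $\langle S_2,\dots,S_{m-1}\rangle$, one rewrites the object defining $\check s$ as
\[
m!\,A_m^q\, S_1(u_1)\,(\ol R^{t}_{12}\cdots \ol R^{t}_{1,m-1}\ol R^{t}_{1m})\,\Xi,\qquad \Xi=\langle S_2,\dots,S_{m-1}\rangle\,\ol R^{t}_{2m}\cdots\ol R^{t}_{m-1,m}.
\]
Here $\Xi$ is precisely the auxiliary object of the same shape but on the copies $2,\dots,m$ and with $m-2$ reflection factors; under the specialization $u_i=uq^{-2i+2}$ its leading spectral parameter is $u_2=uq^{-2}$, which is the source of the shift $u\mapsto uq^{-2}$ in the claimed formula. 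First I would record, as in the definition of the auxiliary minors, that $(m-1)!\,A^{q}_{\{2,\dots,m\}}\Xi$ produces the minors $\check s^{i_2\cdots i_m}_{j_2\cdots j_{m-1},c}(uq^{-2})$ on the copies $2,\dots,m$.

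Next I would reduce the full antisymmetrizer to the one on the copies $2,\dots,m$. Using $A_m^q=A_m^q A^{q}_{\{2,\dots,m\}}$ and commuting $A^{q}_{\{2,\dots,m\}}$ past $S_1(u_1)$ (they act on disjoint copies), the problem becomes the analysis of $A_m^q S_1(u_1)\big(A^{q}_{\{2,\dots,m\}}\ol R^{t}_{12}\cdots\ol R^{t}_{1m}\big)\Xi$. The standard fusion identity for the $q$-antisymmetrizer expresses $A_m^q$ through $A^{q}_{\{2,\dots,m\}}$ and the $q$-transpositions $P^q_{1a}$ mixing copy $1$ with the rest; bringing the row attached to copy $1$ into the $r$-th slot costs exactly the factor $(-q)^{1-r}$ dictated by the antisymmetry relation already recorded for the Sklyanin minors. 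The single factor $S_1(u_1)=S(u)$ together with the crossing block of the $\ol R^{t}_{1a}$ contributes, for a fixed choice of row $i_r$ against the first column $j_1$, a combination of $s_{i_r j_1}(u)$ and $s_{j_1 i_r}(u)$ with the rational coefficients read off from $R^{t}(u,v)$; I would check that this combination is exactly the twisted entry $s^\sharp_{i_r,j_1}(u)$ of its definition. The remaining copies then assemble into the Sklyanin minor $s^{i_1,\dots,\hat i_r,\dots,i_{m-1}}_{j_2,\dots,j_{m-1}}(uq^{-2})$, giving the asserted sum over $r=1,\dots,m-1$.

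For the vanishing statement I would argue on the index $c$ carried by the last copy. Since none of $S_2,\dots,S_{m-1}$ act on copy $m$, the index $c$ is produced solely by the partially transposed matrices $\ol R^{t}_{am}$; inspecting the explicit entries of $R^{t_1}(u,v)$, its diagonal part forces $c$ to match the row of copy $m$, while its crossing part can only transfer $c$ to an index already carried by one of the earlier copies. After imposing the ordering $j_2<\dots<j_{m-1}$, the hypothesis $c\notin\{j_2,\dots,j_{m-1}\}$, and the antisymmetry in the rows $i_1,\dots,i_m$, every surviving configuration has $c\in\{i_1,\dots,i_m\}$, so $\check s=0$ when $c\notin\{i_1,\dots,i_m\}$. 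A closer bookkeeping under the same constraints singles out $c=i_m$, for which the last copy closes onto the diagonal and the residual auxiliary factor collapses to the genuine Sklyanin minor above.

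The main obstacle will be the second step: tracking the non-commuting interaction between $A^{q}_{\{2,\dots,m\}}$ and the string $\ol R^{t}_{12}\cdots\ol R^{t}_{1m}$ precisely enough to isolate the coefficient of $e_{i_1 j_1}\otimes\cdots\otimes e_{i_m c}$, and verifying that the crossing contributions of the $R^{t}$-blocks combine into the single twisted entry $s^\sharp_{i_r,j_1}(u)$ rather than a longer sum. Controlling these $R$-matrix coefficients, via the crossing symmetry relations \eqref{cross rel} and the unitarity \eqref{inverse of R}, and matching the signs and spectral shifts is the delicate part; the vanishing claim and the reduction to the $(m-2)$-minor then follow from the same bookkeeping together with the antisymmetry relations already recorded for $\check s$.
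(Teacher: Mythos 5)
Your plan follows the paper's proof almost step for step: peel off $S_1$ together with the string $\ol R^{t}_{12}\cdots\ol R^{t}_{1m}$, insert $A_m^q=A_m^qA^q_{\{2,\dots,m\}}$, identify the inner block with minors at $uq^{-2}$, and read off the coefficient of $e_{i_1}\otimes\cdots\otimes e_{i_m}$, with the vanishing case handled by tracking which tensor slot can carry the index $c$. Two caveats. First, your identification of $\Xi=\langle S_2,\dots,S_{m-1}\rangle\,\ol R^{t}_{2m}\cdots\ol R^{t}_{m-1,m}$ with the auxiliary object, producing $\check s^{\,\cdot}_{\,\cdot,c}(uq^{-2})$, is a detour that as written does not terminate in the stated form: the final formula contains genuine Sklyanin minors $s^{i_1,\dots,\hat i_r,\dots,i_{m-1}}_{j_2,\dots,j_{m-1}}(uq^{-2})$, not auxiliary ones. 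The collapse you allude to at the end is exactly the first observation of the paper's proof and must come first: since $c\notin\{j_2,\dots,j_{m-1}\}$, the trailing factors $\ol R^{t}_{2m},\dots,\ol R^{t}_{m-1,m}$ act as the identity on $e_{j_2}\otimes\cdots\otimes e_{j_{m-1}}\otimes e_c$, so after inserting $A^q_{\{2,\dots,m\}}$ the inner block applied to that vector yields the ordinary minors $s^{k_2,\dots,k_{m-1}}_{j_2,\dots,j_{m-1}}(q^{-2}u)$ directly. Second, the step you defer (``I would check that this combination is exactly $s^\sharp_{i_r,j_1}(u)$'') is where essentially all of the work lies: the paper splits into the cases $c=j_1=i_m$ and $c=i_m$ with $j_1=i_p$, and within each into $r<p$, $r>p$ (and $r=p$), obtaining coefficients such as
\begin{equation*}
(-q)^{1-r}\frac{q^{-1}x-q}{x-1}s_{i_rj_1}(u)+(-q)^{-r}\frac{(q^{-1}-q)x}{x-1}s_{j_1i_r}(u),\qquad x=q^2u^{-2},
\end{equation*}
which are then matched against the definition of $s^\sharp$; note that the position $p$ of $j_1$ among the $i_a$'s genuinely changes the intermediate coefficient, so this case split cannot be avoided. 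No crossing symmetry or unitarity of the $R$-matrix is needed here --- it is a direct computation with the entries of $R^{t}(u,v)$ and the action of $P^q_\sigma$ on basis vectors.
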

\begin{proof}
For $c\notin \{j_2,\dots,j_{m-1}\}$, we have that
\begin{equation}
\ol R^{t}_{2m}\cdots \ol R^{t}_{m-1,m}e_{j_2}\otimes \dots \otimes e_{j_{m-1}}\otimes e_c
=e_{j_2}\otimes \dots \otimes e_{j_{m-1}}\otimes e_c.
\end{equation}

We compute that
\begin{equation}\label{e:expand}
\begin{split}
A_m ^q \langle S_1,\dots,S_{m-1} \rangle
\ol R^{t}_{1m}\cdots \ol R^{t}_{m-1,m}e_{j_1}\otimes \dots \otimes e_{j_{m-1}}\otimes e_c\\
=A_m ^q S_1 \ol R^{t}_{12}\cdots \ol R^{t}_{1,m} \langle S_2,\dots,S_{m-1} \rangle
e_{j_1}\otimes \dots \otimes e_{j_{m-1}}\otimes e_c,\\
\end{split}
\end{equation}
where $S_i=S_i(uq^{2-2i})$.
Let $A_{\{2,\dots,m\}}^q$ be the $q$-antisymmetrizer on the indices $\{2,\dots,m\}$.
Then $A_m^q=A_m^q A_{\{2,\dots,m\}}^q$,   and
\begin{equation}\label{transpose relation}
\begin{aligned}
  & A_{\{2,\dots,m\}}^q \ol R^{t}_{12} \cdots \ol R^{t}_{1,m}
= \ol R^{t}_{1m}\cdots \ol R^{t}_{12} A_{\{2,\dots,m\}}^q \\
&= A_{\{2,\dots,m\}}^q \ol R^{t}_{12}\cdots \ol R^{t}_{1,m} A_{\{2,\dots,m\}}^q
= A_{\{2,\dots,m\}}^q\ol R^{t}_{1m}\cdots\ol R^{t}_{12} A_{\{2,\dots,m\}}^q
\end{aligned}
\end{equation}
which follows from the  Yang-Baxter equation \eqref{YBE}.

Then we can rewrite \eqref{e:expand} as
\begin{equation}\label{expansion}
\begin{split}
&A_m^q S_1 \ol R^{t}_{12}\cdots \ol R^{t}_{1,m}
A_{\{2,\dots,m\}}^q
\langle S_2,\dots,S_{m-1} \rangle
e_{j_1}\otimes \dots \otimes e_{j_{m-1}}\otimes e_c\\
=&A_m ^q S_1 \ol R^{t}_{12}\cdots \ol R^{t}_{1,m}
\sum s^{k_2,\dots,k_{m-1}}_{j_2,\dots,j_{m-1}}(q^{-2}u)
e_{j_1}\otimes e_{k_2}\otimes \dots \otimes e_{k_{m-1}}\otimes e_c
\end{split}
\end{equation}
where the sum runs through ${k_2<\dots<k_{m-1}}$.
Now let's compute the coefficient of
$e_{i_1}\otimes \dots\otimes e_{i_m}$ in
\begin{equation} \label{coeff}
  \begin{split}
A_m ^q S_1 \ol R^{t}_{12}\cdots \ol R^{t}_{1,m}
  e_{j_1}\otimes e_{k_2}\otimes \dots \otimes e_{k_{m-1}}\otimes e_c.
  \end{split}
  \end{equation}
This is done in several cases: %We divide it into the following cases:

(i) If $c\notin \{i_1,\dots, i_m\}$, then $c\neq j_1$ and
\begin{equation}
\ol R_{1m}^t
e_{j_1}\otimes e_{k_2}\otimes \dots \otimes e_{k_{m-1}}\otimes e_c
=
e_{j_1}\otimes e_{k_2}\otimes \dots \otimes e_{k_{m-1}}\otimes e_c.
\end{equation}
So the basis vectors $e_{r_1}\otimes \dots\otimes e_{r_m}$ in the expansion of
\eqref{expansion}
only contain those with $c\in \{r_1,\dots, r_m\}$, thus
$\check s^{i_1\cdots i_m}_{j_1\cdots j_{m-1},c}(u)=0$.

(ii) If $c=j_1=i_m$, then
\begin{equation}
A_{\{2,\dots,m\}}^q
e_{j_1}\otimes e_{k_2}\otimes \dots \otimes e_{k_{m-1}}\otimes e_c=0
\end{equation}
whenever $k_r=j_1$ for some $2\leq r\leq m-1$.
In order to have $e_{i_1}\otimes e_{i_2}\dots\otimes e_{i_m}$,
$\{k_2,\cdots, k_{m-1}\}$ must be $\{i_1,i_2,\cdots,\hat {i_r},\cdots, i_{m-1}\}$
for some $1\leq r\leq m-1$ and $j_1\notin \{i_1,i_2,\cdots,\hat {i_r},\cdots, i_{m-1}\}$.

Assume that $i_p<j_1<i_{p+1}$.
For $r<p$,
the coefficient of
$
e_{i_1} \otimes \dots  \otimes e_{i_m}
$
in \eqref{coeff}
% \begin{equation}\label{coe e i}
%   \begin{split}
% A_m S_1 R^{t}_{12}\cdots R^{t}_{1,m}
%   e_{j_1}\otimes e_{i_1}\otimes \dots \otimes \hat{e}_{i_r}\otimes \dots \otimes e_{i_{m}}\\
%   \end{split}
%   \end{equation}
is

\begin{equation}
  \begin{split}
  &(-q)^{1-r}\frac{q^{-1}x-q}{x-1}s_{i_r j_1}(u)+(-q)^{-r}\frac{(q^{-1}-q)x}{x-1}s_{j_1i_r}(u),
   \end{split}
  \end{equation}
where $x=q^2u^{-2}$.
This element can be written as   $ (-q)^{1-r}s^{\sharp}_{i_r,j_1}(u)$.
  For $r>p$,
  the coefficient of
  $
  e_{i_1} \otimes \dots  \otimes e_{i_m}
  $
  in
  \eqref{coeff}
is
\begin{equation}
  \begin{split}
  &(-q)^{1-r}\frac{q^{-1}x-q}{x-1}s_{i_r j_1}(u)+(-q)^{2-r}\frac{(q^{-1}-q)}{x-1}s_{j_1i_r}(u),
   \end{split}
  \end{equation}
which is equal to $ (-q)^{1-r}s^{\sharp}_{i_r,j_1}(u)$.

(iii) Suppose $c=i_m$ and $j_1= i_{p}$ for some $1\leq p\leq m-1$.

If $j_1\notin\{k_2,\dots,k_{m-1}\}$, then
 \begin{equation}
\begin{split}
A_m ^q S_1 R^{t}_{12}\cdots R^{t}_{1,m}
e_{j_1}\otimes e_{k_{2}}\otimes \dots \otimes e_{k_{m-1}}\otimes e_c
\\
=A_m ^q \sum_{k_1=1}^Ns_{k_1j_1}(u)
e_{k_1}\otimes e_{k_2}\otimes \dots \otimes e_{k_{m-1}}\otimes e_c.\\
\end{split}
\end{equation}

If $j_1=k_s$ for $2\leq s\leq m-1$, then
\begin{equation}
\begin{split}
&A_m ^q S_1 R^{t}_{12}\cdots R^{t}_{1,m}
e_{j_1}\otimes e_{k_2}\otimes \dots \otimes e_{k_{m-1}}\otimes e_c\\
=&(-q)^{s-2}A_m^q S_1 R^{t}_{12}\cdots R^{t}_{1,m}
e_{j_1}\otimes e_{k_s}\otimes e_{k_2} \dots \hat{e_{k_s}}\dots e_{k_{m-1}}\otimes e_c.\\
\end{split}
\end{equation}
In both cases, in order to have $e_{i_1}\otimes e_{i_2}\dots\otimes e_{i_m}$,
$\{k_2,\cdots, k_{m-1}\}$ must be $\{i_1,i_2,\cdots,\hat {i_r},\cdots, i_{m-1}\}$
for some $1\leq r\leq m-1$.

The coefficient of $e_{i_1}\otimes e_{i_2}\dots \otimes e_{i_m}$ in
\eqref{coeff} is
\begin{equation}
  %\begin{split}
    \begin{cases}
      (-q)^{1-r}\frac{q^{-1}x-q}{x-1}s_{i_r j_1}(u)+(-q)^{-r}\frac{(q^{-1}-q)x}{x-1}s_{j_1i_r}(u), & r<p,\\
            (-q)^{1-r}\frac{q^{-1}x-q}{x-1}s_{i_r j_1}(u)+(-q)^{2-r}\frac{(q^{-1}-q)}{x-1}s_{j_1i_r}(u), & r>p\\
            (-q)^{1-r}s_{i_r,j_1}(u) , &  r=p.
    \end{cases}
  %\end{split}
  \end{equation}
where $x=q^2u^{-2}$. All of these are equal to $ (-q)^{1-r}s^{\sharp}_{i_r,j_1}(u)$.

Therefore, in cases (ii) and (iii)
\begin{equation}
\begin{split}
\check s^{i_1\cdots i_m}_{j_1\cdots j_{m-1},c}(u)&=
\sum_{r=1}^{m-1}
(-q)^{1-r}  s^{\sharp}_{i_r,j_1}(u)
 s^{i_1,\dots,\hat{i_r},\dots,i_{m-1}}_{j_2,\dots,j_{m-1}}(uq^{-2}).
\end{split}
\end{equation}
This completes the proof.

\end{proof}

In order to produce an explicit formula for the Sklyanin determinant in the orthogonal case we introduce a map
\begin{center}
$\pi_{N}:S_{N}\rightarrow S_{N},\ p\mapsto p'$
\end{center}
which was used in the  formula for the Sklyanin determinant for the twisted Yangians \cite{M, MRS}. The
map $\pi_N$ is defined inductively as follows. Given a set of positive integers $\omega_{1}<\cdots<\omega_{N}$, and consider the action of
 $S_{N}$ on these indices. If $N=2$ we define $\pi_{2}$ as the identity map of $S_{2}\rightarrow S_{2}$.
 %whose image is the identity permutation.
 For $N>2$ define a map from the set of ordered pairs $(\omega_{k},\ \omega_{l})$ with $k\neq l$ into themselves by the rule
\begin{equation}\label{permutation map}
\begin{split}
&(\omega_{k},\ \omega_{l})\mapsto(\omega_{l},\ \omega_{k})\ ,\ k,\ l<N,
\\
&(\omega_{k},\ \omega_{N})\mapsto(\omega_{N-1},\ \omega_{k})\ ,\ k<N-1,
\\
&(\omega_{N},\ \omega_{k})\mapsto(\omega_{k},\ \omega_{N-1})\ ,\ k<N-1,
\\
&(\omega_{N-1},\ \omega_{N})\mapsto(\omega_{N-1},\ \omega_{N-2})\ ,
\\
&(\omega_{N},\ \omega_{N-1})\mapsto(\omega_{N-1},\ \omega_{N-2})\ .
\\
\end{split}
\end{equation}
Let $p=(p_{1},\ \ldots,p_{N})$ be a permutation of the indices $\omega_{1}, \ldots, \omega_{N}$. Its image under the map $\pi_{N}$ is the permutation $p'=(p_{1}',\ \ldots,p_{N-1}',\ \omega_{N})$ , where the pair $(p_{1}',p_{N-1}')$ is the image of the ordered pair $(p_{1},p_{N})$ under the map \eqref{permutation map}. Then the pair $(p_{2}',p_{N-2}')$ is found as the image of $(p_{2},p_{N-1})$ under the map \eqref{permutation map} which is defined on the set of ordered pairs of elements obtained from $(\omega_{1},\ \ldots,\ \omega_{N})$ by deleting $p_{1}$ and $p_{N}$.
The procedure is completed in the same manner by determining consecutively
the pairs  $(p_{i}',p_{N-i}')$.

In the following theorem, we give explicit formula for Sklyanin determinants. Note that the explicit formula for Sklyanin determinants in orthogonal case was given in \cite{MRS}.
\begin{theorem}\label{sdet formula}
The Sklyanin determinant $\sdet_{q}(S(u))$ can be written explicitly as
\begin{equation}
  \begin{aligned}
  \sdet_{q}(S(u)) &=  \sum_{p\in S_N}(-q)^{l(p')-l(p)}
  s^\sharp _{p_1p_1'}(u)\cdots  s^\sharp_{p_np_n'}(q^{2-2n}u)\\
  &\times
s_{p_{n+1}p_{n+1}'}(q^{-2n}u)\cdots s_{p_Np_N'}(q^{2-2N}u).
\end{aligned}
\end{equation}
\end{theorem}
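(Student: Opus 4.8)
The plan is to prove the formula by downward induction on $N$, reducing the size by two at each stage, with the base cases $N=1,2$ checked directly from the definition $\sdet_q S(u)=s^{1\cdots N}_{1\cdots N}(u)$ (the argument is uniform in the orthogonal and symplectic cases, since the two tools below hold in both). The induction is driven by two results already at hand: the preceding Lemma, which peels off the last column and produces an ordinary factor $s_{cN}(q^{2-2N}u)$ matching the last factor $s_{p_Np_N'}(q^{2-2N}u)$ of the target; and Proposition \ref{expansion auxiliary det}, which expands the auxiliary minor $\check s$ along its first column and produces a factor $s^\sharp_{i_r,1}(u)$, the first $s^\sharp$-factor, together with a genuine Sklyanin minor of size $N-2$ evaluated at $uq^{-2}$. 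The shape of this output, one $s^\sharp$-factor, one $s$-factor and a smaller Sklyanin determinant, is exactly what the claimed formula predicts, so the whole proof is a matter of iterating these two steps and controlling the combinatorics.

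Concretely, I would first apply the Lemma with $m=N$ and $i=j=(1,\dots,N)$ to write
\[
\sdet_q S(u)=\sum_{c=1}^{N}\check s^{1\cdots N}_{1\cdots N-1,\,c}(u)\,s_{cN}(q^{2-2N}u).
\]
For each value of $c$ I would use the antisymmetry of $\check s$ in its upper indices to move $c$ into the final row slot, picking up a power $(-q)^{d_c}$ (with $d_c$ the length of this reordering), and then invoke Proposition \ref{expansion auxiliary det} with $j_1=1$, giving
\[
\check s^{1\cdots N}_{1\cdots N-1,\,c}(u)=(-q)^{d_c}\sum_{r}(-q)^{1-r}\,s^\sharp_{i_r,1}(u)\,s^{\,i_1\cdots\widehat{i_r}\cdots}_{2\cdots N-1}(uq^{-2}),
\]
where the surviving minor sits on the index sets left after removing the two peeled columns $1,N$ and the two peeled rows $c,i_r$. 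The spectral shift $u\mapsto uq^{-2}$ is precisely the one that converts the $k$-th argument $q^{2-2k}(uq^{-2})$ of the size-$(N-2)$ statement into the arguments $q^{-2k}u$ occupying positions $k+1$ of the size-$N$ statement, so the induction hypothesis applies verbatim to this minor and reproduces the middle factors with their correct $s^\sharp$/$s$ split and spectral parameters.

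The main obstacle is the final combinatorial reconciliation: one must verify that assembling the peeled factors over all $c$ and $r$, together with the permutation of the reduced index set delivered by the induction hypothesis, reproduces exactly the single sum over $p\in S_N$ with weight $(-q)^{l(p')-l(p)}$ and companion permutation $p'=\pi_N(p)$. This is where the recursive definition of $\pi_N$ is decisive: its rule $(p_1,p_N)\mapsto(p_1',p_{N-1}')$ from \eqref{permutation map}, applied to the outermost pair and then iterated on $(p_2,p_{N-1})$ over the reduced set $\{2,\dots,N-1\}$, is tailored to record precisely how the row labels attached to the two peeled columns determine the corresponding entries of $p'$. The delicate part is the sign accounting: I must show that the antisymmetry sign $(-q)^{d_c}$ from relocating $c$ together with the factor $(-q)^{1-r}$ from the Proposition combine, over the two peeled positions, into exactly the increment of $l(p')-l(p)$ contributed by the outermost pair, so that these increments telescope across the $n$ levels of the recursion into the global exponent $l(p')-l(p)$. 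Once this sign identity and the matching of $\pi_N$ with the recursive pairing are verified at one level, the induction closes.
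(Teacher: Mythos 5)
Your overall strategy --- peel the last column with the Lemma, peel one more column with Proposition \ref{expansion auxiliary det}, and iterate --- is exactly the paper's, but two concrete steps in your execution fail. First, you cannot invoke Proposition \ref{expansion auxiliary det} with $j_1=1$ uniformly in $c$: the proposition requires $c\notin\{j_2,\dots,j_{m-1}\}$, and with your choice $\{j_2,\dots,j_{m-1}\}=\{2,\dots,N-1\}$ this hypothesis fails for every $c$ with $2\le c\le N-1$. The lower-index antisymmetry of $\check s$ forces you to place $j_1=c$ when $c\le N-1$ (and the choice compatible with $\pi_N$ is $j_1=N-1$ when $c=N$), so the peeled factor is $s^\sharp_{i_r,c}(u)$ (resp.\ $s^\sharp_{i_r,N-1}(u)$), not $s^\sharp_{i_r,1}(u)$. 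This matters structurally: in the target formula the second index of the first factor is $p_1'$, which by the rules \eqref{permutation map} equals $p_N=c$ or $N-1$ and is almost never $1$; an expansion in which every first-level factor has the form $s^\sharp_{\cdot\,,1}$ cannot match the right-hand side. Consequently the two peeled columns are $N$ and $c$ (or $N-1$), not $1$ and $N$, and the surviving minor has column set $\{1,\dots,N-1\}\setminus\{j_1\}$ rather than $\{2,\dots,N-1\}$.

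Second, even after this correction the induction does not close as you set it up. The surviving minor $s^{I'}_{J'}(q^{-2}u)$ has row set $I'=\{1,\dots,N\}\setminus\{c,i_r\}$ and column set $J'$ as above, and these differ in general, whereas your induction hypothesis is the theorem itself, which only concerns the principal minor $s^{1\cdots N}_{1\cdots N}$, so it does not "apply verbatim." You need to strengthen the statement to the class of minors $s^{i_1,\dots,i_m}_{i_1,\dots,i_{m-1},j_m}$ (first $m-1$ columns equal to the first $m-1$ rows, last column arbitrary) and check that one pass of the Lemma plus Proposition \ref{expansion auxiliary det} sends this class to itself; that closed recurrence is precisely what the paper iterates. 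With these two repairs your plan coincides with the paper's proof, and the remaining sign bookkeeping against $l(p')-l(p)$ proceeds as you describe.
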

\begin{proof} For $i_1<i_2\cdots <i_m$, we can write
 \begin{equation}
  \begin{split}
s^{i_1,\cdots,i_m}_{i_1,\cdots,i_{m-1},j_m}(u)
  =\sum_{k=1}^{m}\check s^{i_1,\cdots,i_m}_{i_1,\cdots,i_{m-1},i_k}(u)
  s_{i_k,j_m}(q^{2-2m}u).\\
  \end{split}
  \end{equation}
  It follows from Proposition \ref{expansion auxiliary det} that
  \begin{align*}
 s&^{i_1,\cdots,i_m}_{i_1,\cdots,i_{m-1},j_m}
 =s^\sharp_{i_{m-1},i_{m-1}}(u )
  s^{i_1, \cdots,i_{m-2}}_{i_1,\cdots, i_{m-2} }(q^{-2}u)
  s_{i_m, j_m }(q^{2-2m}u)  \\
  &+(-q)^{2m-2l+3}\sum_{l=1}^{m-2} s^\sharp _{i_l,i_{m-1}}(u)
 s^{i_1,\cdots,\hat{i_l},\cdots,i_{m-1}}_{i_1,\cdots,\hat{i_l},\cdots,i_{m-2},i_l}
 (q^{-2}u)
 s_{i_m, j_m }(q^{2-2m}u) \\
 &+(-q)^{2k-2m+1}\sum_{k=1}^{m-1} s^\sharp _{i_m,i_k} (u)
 s^{i_1,\cdots,\hat{i_k},\cdots,i_{m-1}}
  _{i_1,\cdots,\hat{i_k},\cdots,i_{m-1}}(q^{-2}u)
  s_{i_k, j_m }(q^{2-2m}u)
 \\
  &+(-q)^{2m-2k-2l}\sum_{k=1}^{m-1}\sum_{l=1}^{k-1}
  s^\sharp_{i_l,i_k}(u)
 s^{i_1,\cdots,\hat{i_l},\cdots,\hat{i_k},\cdots,i_m}
  _{i_1,\cdots,\hat{i_l},\cdots,\hat{i_k},\cdots,i_{m-1},i_l}(q^{-2}u)
 s_{i_k, j_m }(q^{2-2m}u) \\
  \\
  &+(-q)^{2m-2k-2l+2}\sum_{k=1}^{m-1}\sum_{l=k+1}^{m-1}
  s^\sharp_{i_l,i_k}(u)
  s^{i_1,\cdots,\hat{i_k},\cdots,\hat{i_l},\cdots,i_m}
  _{i_1,\cdots,\hat{i_k},\cdots,\hat{i_l},\cdots,i_{m-1},i_l}(q^{-2}u)
 s_{i_k, j_m }(q^{2-2m}u). \\
   \end{align*}
  Starting with $s^{1,\cdots,N}_{1,\cdots,N}(u)$, we apply the recurrence relation repeatedly to write
  the Sklyanin determinant $\sdet_{q}(X)$ in terms of the generator $s_{ij}$:
  \begin{equation}
    \begin{split}
    \sdet_{q}(S(u))=  \sum_{p\in S_N}(-q)^{l(p')-l(p)}
    s^\sharp_{p_1p_1'}(u)\cdots s^\sharp_{p_np_n'}(q^{2-2n}u)\\
    \times
  s_{p_{n+1}p_{n+1}'}(q^{-2n}u)\cdots x_{p_Np_N'}(q^{2-2N}u).
  \end{split}
  \end{equation}
  \end{proof}

\subsection{Minor identities for Sklyanin determinants}

We define the Sklyanin comatrix $\wh S(u)$ by
\begin{equation}
\wh S(u) S(q^{2-2N}u)=\sdet_{q}(S(u))I.
\end{equation}

\begin{proposition}\label{Skl comatrix}
The matrix elements  $\hat s_{ij}(u)$ are given by
\begin{equation}
\hat s_{ij}(u)=(-q)^{N-i}\check s^{1,\cdots,N}_{1,\cdots \hat i,\cdots ,N,j}(u)
\end{equation}
for $i\neq j$ and
\begin{equation}
\hat s_{ii}(u)=s^{1,\cdots \hat i,\cdots ,N}_{1,\cdots \hat i,\cdots ,N}(u).
\end{equation}
\end{proposition}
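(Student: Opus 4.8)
The plan is to follow the strategy of Proposition \ref{qdet comatrix}, except that, because $S(u)$ need not be invertible, I will read the entries of $\wh S(u)$ off the column--expansion lemma rather than by cancelling a factor of $S$. In components the defining relation $\wh S(u)\,S(q^{2-2N}u)=\sdet_{q}(S(u))\,I$ reads $\sum_{c}\hat s_{ic}(u)\,s_{cj}(q^{2-2N}u)=\delta_{ij}\,\sdet_{q}(S(u))$, so it suffices to verify that the stated formulas for $\hat s_{ic}(u)$ solve this linear system.

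For the off--diagonal entries I would apply the lemma expanding a Sklyanin minor along its last column, $s^{i_1\cdots i_m}_{j_1\cdots j_m}(u)=\sum_{c}\check s^{i_1\cdots i_m}_{j_1\cdots j_{m-1},c}(u)\,s_{c j_m}(uq^{2-2m})$, with $m=N$, rows $(1,\dots,N)$, and column sequence $(1,\dots,\hat i,\dots,N,j)$. The left--hand side is a Sklyanin minor whose columns form the multiset $\{1,\dots,\hat i,\dots,N\}\cup\{j\}$: it vanishes when $j\neq i$ (a repeated column), while when $j=i$ the columns are a permutation of $(1,\dots,N)$, so the antisymmetry relation $s^{i_{\sigma(1)}\cdots}_{j_{\tau(1)}\cdots}=(-q)^{-l(\sigma)-l(\tau)}s^{i_1\cdots}_{j_1\cdots}$ produces $(-q)^{-(N-i)}\sdet_{q}(S(u))$, the exponent $N-i$ being the number of inversions of the cycle carrying $i$ to the last slot. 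Hence $\sum_{c}\check s^{1,\cdots,N}_{1,\cdots,\hat i,\cdots,N,c}(u)\,s_{cj}(q^{2-2N}u)=\delta_{ij}(-q)^{-(N-i)}\sdet_{q}(S(u))$, and comparison with the component relation yields $\hat s_{ij}(u)=(-q)^{N-i}\check s^{1,\cdots,N}_{1,\cdots,\hat i,\cdots,N,j}(u)$ for all $j$, which is exactly the asserted formula when $i\neq j$.

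It remains to identify the diagonal value $(-q)^{N-i}\check s^{1,\cdots,N}_{1,\cdots,\hat i,\cdots,N,i}(u)$ with the principal minor $s^{1,\cdots,\hat i,\cdots,N}_{1,\cdots,\hat i,\cdots,N}(u)$. Using the companion antisymmetry $\check s^{i_{\sigma(1)}\cdots i_{\sigma(m)}}_{\cdots,c}=(-q)^{l(\sigma)}\check s^{i_1\cdots i_m}_{\cdots,c}$ to move $i$ into the last row absorbs the prefactor $(-q)^{N-i}$ and leaves $\hat s_{ii}(u)=\check s^{1,\cdots,\hat i,\cdots,N,i}_{1,\cdots,\hat i,\cdots,N,i}(u)$, an auxiliary minor whose last row index coincides with its distinguished column index. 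For such a minor I would argue at the operator level: in $A_{N}^{q}\langle S_1,\dots,S_{N-1}\rangle\,\ol R^{t}_{1N}\cdots\ol R^{t}_{N-1,N}$ one feeds the column index $i$ into the last tensor slot while the remaining slots carry indices from $\{1,\dots,\hat i,\dots,N\}$; since $\ol R^{t}(x)(e_a\otimes e_i)=e_a\otimes e_i$ for every $a\neq i$, all factors $\ol R^{t}_{kN}$ act trivially, the last slot decouples, and $A_{N}^{q}$ collapses to the antisymmetrizer $A_{N-1}^{q}$ over the remaining slots, so the surviving coefficient is precisely $s^{1,\cdots,\hat i,\cdots,N}_{1,\cdots,\hat i,\cdots,N}(u)$. (Equivalently, one may invoke Proposition \ref{expansion auxiliary det} in the case $c=i_m$ and match the resulting $s^\sharp$--expansion against that of the principal minor.) The main technical point is this diagonal identity: one must check that the powers of $-q$ coming from reordering the rows and from the reduction of $A_{N}^{q}$ to $A_{N-1}^{q}$ cancel exactly, leaving a genuine principal Sklyanin minor with no residual sign; the off--diagonal case, by contrast, is immediate from the column--expansion lemma and the vanishing of minors with a repeated column.
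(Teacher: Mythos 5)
Your proof is correct, but it is organized differently from the paper's. The paper works directly at the operator level: it multiplies $A_N^q\langle S_1,\dots,S_N\rangle = A_N^q\,\sdet_q(S(u))$ on the right by $S_N(q^{2-2N}u)^{-1}$ to obtain $A_N^q\langle S_1,\dots,S_{N-1}\rangle\ol R^{t}_{1N}\cdots\ol R^{t}_{N-1,N}=A_N^q\wh S_N(u)$, and then reads off both formulas by evaluating on $v_{ij}=e_1\otimes\cdots\hat e_i\cdots\otimes e_N\otimes e_j$ for $i\neq j$ and on $v_{ii}$ (where the $\ol R^{t}_{kN}$ act trivially --- exactly your ``decoupling'' observation). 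You instead take the candidate matrix $B_{ij}=(-q)^{N-i}\check s^{1\cdots N}_{1\cdots\hat i\cdots N,j}(u)$ and verify $B\,S(q^{2-2N}u)=\sdet_q(S(u))I$ via the column-expansion lemma together with the vanishing of minors with a repeated column and the column antisymmetry; this is clean, gives a uniform formula valid for all $j$, and then the diagonal statement follows from the $\check s$ row-antisymmetry plus the same decoupling argument. Since both proofs ultimately rest on the same factorization $\langle S_1,\dots,S_N\rangle=\langle S_1,\dots,S_{N-1}\rangle\ol R^{t}_{1N}\cdots\ol R^{t}_{N-1,N}S_N(q^{2-2N}u)$, the difference is presentational rather than substantive. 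One caveat: your opening claim that this route avoids the invertibility of $S(u)$ is not borne out. To pass from ``$B$ solves the linear system'' to ``$B=\wh S(u)$'' you must cancel $S(q^{2-2N}u)$ on the right, and the comatrix is only well defined by its defining relation when $S(q^{2-2N}u)$ is invertible --- which it is, because $S^{(0)}$ is invertible in both the orthogonal and symplectic cases, so no harm is done; just drop that claim or invoke the invertibility explicitly.
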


\begin{proof}
Multiplying $S_N(q^{2-2N}u)^{-1}$ from the right of the formulas
\begin{equation}
A_N^q \langle S_1,\dots,S_N \rangle
=A_N^q \sdet_q(S(u)),
\end{equation}
we get that
\begin{equation}
A_N^q \langle S_1,\dots,S_{N-1} \rangle
\ol R^{t}_{1N}\cdots \ol R^{t}_{N-1,N}=A_N^q \hat S_{N}(u).
\end{equation}
Applying both sides to the vector
\begin{equation}
v_{ij}=e_1\otimes\cdots\hat{e_{i}}\otimes e_{N}\otimes e_j
\end{equation}
and comparing the coefficients of
$e_1\otimes\cdots \otimes e_{N} $ we get the first formula.
Using $\ol R^t_{kN}v_{ii}=v_{ii}$ for $1\leq k\leq N-1$, applying the operators to the vector
$v_{ii}$ we obtain the second formula.
\end{proof}

Let $C=\diag((-q)^{\frac{N-1}{2}}, (-q)^{\frac{N-3}{2}},\ldots, (-q)^{-\frac{N-1}{2}})$ be the $N\times N$ diagonal matrix.
We have that
\begin{equation}
R(x)C_1C_2 = C_1C_2R(x),
\end{equation}
and the crossing symmetry relations can be written as
\begin{equation}
  \begin{split}
  R_{12}^{-1}(x)^{t_2}C_1C_2R_{12}^{t_2}(xq^{2N})=C_1C_2,\\
  R_{12}^{t_1}(xq^{2N}) C_1C_2 R_{12}^{-1}(x)^{t_1}=C_1C_2.
 \end{split}
  \end{equation}
We see that
\begin{equation}
  \begin{split}
(\ol R(x)^t)^{-1}= C_1C_2 \ol R'(q^{-2N}x)^tC_1^{-1}C_2^{-1},
 \end{split}
  \end{equation}
where $\ol R'(x)$ is obtained from $ \ol  R(x)$ by replacing $q$ with  $q^{-1}$.
Taking the inverse of both sides of the reflection equation
\begin{equation}
  \begin{split}
R(u/v)S_{1}(u)R^{t}(1/uv)S_{2}(v)=S_{2}(v)R^{t}(1/uv)S_{1}(u)R(u/v),
 \end{split}
  \end{equation}
subsequently %we have that
\begin{equation}
  \begin{split}
  R'(u/v) S_{1}(u)^{-1}C_1C_2\left(R '(1/q^{2N}uv)\right)^{t}C_1^{-1}C_2^{-1}S_{2}(v)^{-1}\\
  =S_{2}(v)^{-1}C_1C_2\left(R '(1/q^{2N}uv)\right)^{t}C_1^{-1}C_2^{-1}S_{1}(u)^{-1}R '(u/v). \end{split}
  \end{equation}
Multiplying $C_1^{-1}C_2^{-1}$ from the left and $C_1C_2$ from the right of both sides,
\begin{equation}
  \begin{split}
  R '(u/v) C_1^{-1} S_{1}(u)^{-1}C_1 \left(R ' (1/q^{2N}uv)\right)^{t} C_2^{-1}S_{2}(v)^{-1} C_2\\
  = C_2^{-1}S_{2}(v)^{-1} C_2\left(R ' (1/q^{2N}uv)\right)^{t}C_1^{-1} S_{1}(u)^{-1}C_1  R' (u/v).
   \end{split}
  \end{equation}
Denote
$C^{-1} S (q^{-N}u)^{-1} C$ by $X(u)$,
then $X(u)$ satisfies the $q^{-1}$-reflection relation, i.e.
  \begin{equation}\label{inverse reflection rel}
  \begin{split}
  R '(u/v) X_1(u)  \left(R '(1/ uv)\right)^{t}  X_2(u)\\
  = X_2(u) \left(R ' (1/ uv)\right)^{t}  X_1(u)     R '(u/v).
 \end{split}
  \end{equation}

We write $S(u)$ as
\begin{equation}
\begin{split}
s_{ij}(u)=\displaystyle \sum_{r=0}^{\infty}S^{(r)}u^{-r}.
\end{split}
\end{equation}

In the orthogonal case, the matrix $S^{(0)}$ is a lower unitriangular matrix, it is clearly invertible
 and the inverse of $S^{(0)}$  is lower unitriangular.
In the symplectic case, $S^{(0)}$  is invertible and
\begin{equation}
\begin{pmatrix}
({S^{(0)}})^{-1}_{ii} & {({S^{(0)}})^{-1}}_{ii'} \\
 {({S^{(0)}})^{-1}}_{i'i} & {({S^{(0)}})^{-1}}_{i'i'}
\end{pmatrix}
=
q^{-3}\begin{pmatrix}
  s_{i'i'} &-q^{2} s_{ii'} \\
  (q-q^{-1})s_{ii'}-s_{i'i} & s_{ii}
\end{pmatrix}
%\left(
%\begin{array}{cc}
%  {({S^{(0)}})^{-1}}_{ii} & {({S^{(0)}})^{-1}}_{ii'} \\
% {({S^{(0)}})^{-1}}_{i'i} & {({S^{(0)}})^{-1}}_{i'i'} \\
%\end{array}
%\right)
%=
%q^{-3}\left(
%\begin{array}{cc}
%  s_{i'i'} &-q^{2} s_{ii'} \\
%  (q-q^{-1})s_{ii'}-s_{i'i} & s_{ii},
%\end{array}
%\right)
\end{equation}
therefore
\begin{equation}
\begin{aligned}
 &{(S^{(0)})^{-1}}_{i'i'}{(S^{(0)})^{-1}}_{ii}-q^{-2}{(S^{(0)})^{-1}}_{i'i}{(S^{(0)})^{-1}}_{ii'}\\
 &=q^{-6}s_{ii}s_{i'i'}+q^{-6}((q-q^{-1})s_{ii'}-s_{i'i})  s_{ii'}=q^{-3}.
 \end{aligned}
\end{equation}
Also we have that %Then $S$ is invertible and
\begin{equation}
{(S^{(0)})^{-1}}_{ij}=0 \text{ for } i<j \text{ with } j\neq i'.
\end{equation}

Combining with \eqref{inverse reflection rel}, we have the following proposition.
\begin{proposition}
The mapping $S(u)\mapsto X(u)$, $q\mapsto q^{-1}$, defines an automorphism of $\mathrm Y_q^{\tw}(\g_N)$.
\end{proposition}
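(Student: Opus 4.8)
The plan is to realize the assignment as a $\mathbb{C}$-algebra map $\phi$ that acts on the base by $q\mapsto q^{-1}$ and sends the generating matrix $S(u)$ to $X(u)=C^{-1}S(q^{-N}u)^{-1}C$, and then to establish separately that $\phi$ is a well-defined homomorphism and that it is bijective. By the defining property of $\mathrm{Y}_q^{\tw}(\g_N)$, to produce the homomorphism it suffices to verify that the entries of $X(u)$, together with $\phi(q)=q^{-1}$, satisfy the defining relations of the twisted $q$-Yangian with $q$ replaced throughout by $q^{-1}$; for bijectivity I would prove that $\phi$ is an involution.

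First I would record that the entries of $X(u)$ genuinely lie in (a completion of) $\mathrm{Y}_q^{\tw}(\g_N)$: since $S^{(0)}$ is invertible in both cases, the series $S(u)$ is invertible as a formal series in $u^{-1}$, so $S(q^{-N}u)^{-1}$ has coefficients in the algebra and conjugation by the diagonal matrix $C$ keeps them there. The reflection relation for $X(u)$ is exactly the already-established $q^{-1}$-reflection relation \eqref{inverse reflection rel}, so that half of the defining relations is free. For the constant-term relations I would pass to the limit $u\to\infty$, where $X(u)\to X^{(0)}=C^{-1}(S^{(0)})^{-1}C$. Conjugation by the diagonal $C$ preserves the upper/lower zero pattern, fixes the diagonal entries, and leaves the product $X^{(0)}_{i'i}X^{(0)}_{ii'}=(S^{(0)})^{-1}_{i'i}(S^{(0)})^{-1}_{ii'}$ unchanged, so the required $q^{-1}$-relations follow at once from the properties of $(S^{(0)})^{-1}$ computed above. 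In the orthogonal case $(S^{(0)})^{-1}$ is lower unitriangular, whence $X^{(0)}$ is lower unitriangular. In the symplectic case the zero pattern gives $X^{(0)}_{ij}=0$ for $i<j$ with $j\neq i'$, the displayed identity $(S^{(0)})^{-1}_{i'i'}(S^{(0)})^{-1}_{ii}-q^{-2}(S^{(0)})^{-1}_{i'i}(S^{(0)})^{-1}_{ii'}=q^{-3}$ is precisely the $q\mapsto q^{-1}$ form of the quadratic constant-term relation, and invertibility of $X^{(0)}_{ii'}$ (it is a scalar multiple of $s_{ii'}^{(0)}$) lets me send $(s_{ii'}^{(0)})^{-1}$ to $(X^{(0)}_{ii'})^{-1}$ consistently.

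It then remains to prove $\phi^2=\id$. On the base $\phi^2(q)=q$ is clear. On the matrix I would use the identity $\phi(C)=C(q^{-1})=C(q)^{-1}$, valid because $(-q^{-1})=(-q)^{-1}$ entrywise, together with $\phi(S(q^{-N}u))=X(q^N u)=C^{-1}S(u)^{-1}C$ (the spectral shift $q^{-N}\mapsto q^{N}$ coming from $\phi(q)=q^{-1}$). Since $\phi$ preserves inverses, this yields
\begin{equation*}
\phi^2(S(u))=\phi(C^{-1})\,\phi\!\left(S(q^{-N}u)^{-1}\right)\phi(C)=C\,\bigl(C^{-1}S(u)^{-1}C\bigr)^{-1}C^{-1}=S(u),
\end{equation*}
so $\phi$ is an involutive automorphism. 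The main obstacle is the bookkeeping in this last computation: one must correctly track that the second application of $\phi$ inverts $q$ inside $C$ (giving $\phi(C)=C^{-1}$) while simultaneously converting the spectral argument $q^{-N}u$ into $q^{N}u$, so that the inner term $X(q^N u)$ collapses back to a conjugate of $S(u)^{-1}$. The earlier explicit evaluation of $(S^{(0)})^{-1}$ is exactly what guarantees that the constant-term relations land on their $q^{-1}$ counterparts precisely, rather than up to correction terms.
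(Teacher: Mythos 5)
Your proposal is correct and follows essentially the same route as the paper, which likewise obtains the homomorphism property by combining the $q^{-1}$-reflection relation \eqref{inverse reflection rel} for $X(u)$ with the preceding computations of $(S^{(0)})^{-1}$ (lower unitriangularity in the orthogonal case, the quadratic identity and zero pattern in the symplectic case). The paper leaves bijectivity implicit, so your explicit verification that $\phi^2=\id$ (with the bookkeeping $\phi(C)=C^{-1}$ and $\phi(S(q^{-N}u))=X(q^{N}u)$) is a welcome completion rather than a departure.
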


The following result is the Sklyanin determinant analogue of Jacobi's theorem.

\begin{theorem}\label{skly jacobi thm}
Let $I=\{i_1<i_2<\cdots<i_k\}$ be a subset of $[1,N]$ and
$I^{c}=\{i_{k+1}<\cdots<i_{N}\}$ the complement of $I$. Then
\begin{equation}\label{e:sklJacobi}
\begin{split}
\sdet_q S_I(u)=
\sdet_q S(u)  \sdet_{q^{-1}}X_{I^c} (q^{2-N}u),
\end{split}
\end{equation}
where
$X(u)=C^{-1} S (q^{-N}u)^{-1} C $.
\end{theorem}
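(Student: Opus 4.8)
The plan is to imitate the proof of Theorem~\ref{Jacobi thm qdet}, replacing the plain product $L_1(u)\cdots L_N(u)$ by the reflection product $\langle S_1,\dots,S_N\rangle$ and the inverse matrix $L(u)^{-1}$ by $X(u)=C^{-1}S(q^{-N}u)^{-1}C$. The starting point is the centrality relation $A_N^q\langle S_1,\dots,S_N\rangle=A_N^q\,\sdet_q S(u)$ obtained by specializing $u_i=uq^{2-2i}$. The whole identity \eqref{e:sklJacobi} should drop out by peeling the last $N-k$ reflection factors off this relation and recognizing the peeled part as a $q^{-1}$-reflection product for the matrices $X$.

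First I would carry out the peeling. Using the recursion $A_m^q\langle S_1,\dots,S_m\rangle=A_m^q\langle S_1,\dots,S_{m-1}\rangle\,\ol R^t_{1m}\cdots\ol R^t_{m-1,m}\,S_m(uq^{2-2m})$ (the formula used in the lemma preceding Proposition~\ref{expansion auxiliary det}), together with a factorization $A_N^q=A_N^q A^q_{J}$ for suitable index blocks $J$ and the Yang--Baxter moves \eqref{transpose relation}, I would strip the outermost $S_N,S_{N-1},\dots,S_{k+1}$ one at a time by right multiplication by their inverses. Unlike the $RTT$ case, stripping $S_m$ requires first exposing it by pushing the trailing factor $\ol R^t_{m,m+1}\cdots$ out of the way, so this step is genuinely more involved than in Theorem~\ref{Jacobi thm qdet}. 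The outcome is an identity of the schematic form $A_N^q\langle S_1,\dots,S_k\rangle\,\Theta=\sdet_q S(u)\,A_N^q\,\Xi$, where $\Theta$ collects the $\ol R^t$ factors connecting the first $k$ slots to the last $N-k$, and $\Xi$ is built from the inverses $S_m(\cdot)^{-1}$ and the exposed $\ol R^t$ factors on slots $k+1,\dots,N$.

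Second I would recognize $\Xi$ as a reflection product for $X$ formed with $q^{-1}$. The crossing-symmetry identity $(\ol R(x)^t)^{-1}=C_1C_2\,\ol R'(q^{-2N}x)^t\,C_1^{-1}C_2^{-1}$ recorded before the theorem turns each exposed $\ol R^t$ into an $\ol R'(\cdot)^t$ at a shifted argument, while the conjugation by the diagonal matrices $C$ converts each $S_m(\cdot)^{-1}$ into a value of $X(\cdot)=C^{-1}S(\cdot)^{-1}C$. Since $X(u)$ satisfies the $q^{-1}$-reflection relation \eqref{inverse reflection rel} --- equivalently, $S\mapsto X$, $q\mapsto q^{-1}$ is the automorphism of the preceding proposition --- the reorganized block $\Xi$ is exactly $\langle X_N,\dots,X_{k+1}\rangle$ formed with $q^{-1}$, with spectral parameters aligned so that the leading entry is $X(q^{2-N}u)=C^{-1}S(q^{2-2N}u)^{-1}C$.

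Finally I would evaluate both sides on $e_{i_1}\otimes\cdots\otimes e_{i_k}\otimes e_{i_N}\otimes\cdots\otimes e_{i_{k+1}}$ and extract the coefficient of $e_1\otimes\cdots\otimes e_N$. As in Theorem~\ref{Jacobi thm qdet}, the last $N-k$ slots carry $I^c$, so the $q$-antisymmetrizer forces the first $k$ slots to carry the complementary set $I$; the reflection product on those slots, with columns $i_1,\dots,i_k$, then reproduces the Sklyanin minor $s^{i_1\cdots i_k}_{i_1\cdots i_k}(u)=\sdet_q S_I(u)$, and symmetrically the block $\langle X_N,\dots,X_{k+1}\rangle$ on the last $N-k$ slots reproduces $\sdet_{q^{-1}}X_{I^c}(q^{2-N}u)$ through the $q^{-1}$-antisymmetrizer built into $\sdet_{q^{-1}}$. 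Because $I$ and $I^c$ are both listed in increasing order and the reversal $e_{i_N}\otimes\cdots\otimes e_{i_{k+1}}$ contributes the same power of $-q$ on both sides, the normalizing signs cancel and \eqref{e:sklJacobi} follows with no residual factor; the case $k=0$ is the Sklyanin analogue of Corollary~\ref{inverse det} and the case $k=N-1$ is Proposition~\ref{Skl comatrix}, which serve as consistency checks. The main obstacle is precisely the second step: tracking the $C$-conjugations and the spectral shifts as the $\ol R^t$ factors are moved, inverted, and reassembled, so that $\Xi$ becomes the reflection product for $X$ with the correct argument $q^{2-N}u$. This bookkeeping, rather than any new conceptual ingredient, is where the work lies.
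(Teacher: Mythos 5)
Your proposal is correct and follows essentially the same route as the paper: start from $A_N^q\langle S_1,\dots,S_N\rangle=\sdet_q S(u)\,A_N^q$, peel off the last $N-k$ factors by right-multiplying by inverses, convert the peeled block into a $q^{-1}$-reflection product for $X$ via the crossing-symmetry/$C$-conjugation identity, and evaluate on $e_{i_1}\otimes\cdots\otimes e_{i_k}\otimes e_{i_N}\otimes\cdots\otimes e_{i_{k+1}}$. The only cosmetic difference is that the paper reads off the coefficient of that same permuted vector (using $\prod\ol R^t_{ij}v=v$) rather than of $e_1\otimes\cdots\otimes e_N$, which sidesteps the sign-cancellation argument you supply separately.
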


\begin{proof}
The $q$-antisymmetrizer satisfies the relation
\begin{equation}
C_1\dots C_N A_{N}^{q} C_1^{-1}\dots C_{N}^{-1}=A_N^{q}.
\end{equation}

By the relation
\begin{equation}
A_{N}^q \langle S_1,\dots,S_{N} \rangle =\sdet_q S(u) A^q_N
\end{equation}
and the definition of $\langle S_1, \cdots, S_N\rangle$ we have that
 \begin{equation}
 \begin{split}
&A^q_{N}\langle S_1,\dots,S_{k} \rangle
\overrightarrow{\prod}_{1\leq i\leq k<j\leq N}\ol R_{ij}^t\\
&=
\sdet_q(X) A_{N}^q
S^{-1}_{N}(\ol R_{N-1,N}^t)^{-1}S_{N-1}\cdots  (\ol R_{k+1,k+2}^t)^{-1}S_{k+1}^{-1}
\end{split}
\end{equation}
where $u_i=uq^{-2i+2}$, $S_i=S(u_i)$ and $\ol R_{ij}^t=\ol R_{ij}^t(1/u_iu_j)$.
\begin{equation}\label{e:4.67}
 \begin{aligned}
&A_{N}^q \langle S_1,\dots,S_{k} \rangle
\overrightarrow{\prod}_{1\leq i\leq k<j\leq N}\ol R_{ij}^t\\
&=\sdet_q(S(u)) A_{N}^q
S^{-1}_{N}(\ol R_{N-1,N}^t)^{-1}S_{N-1}\cdots  (\ol R_{k+1,k+2}^t)^{-1}S_{k+1}^{-1}\\
&= \sdet_q S(u) A_{N}^q C_{k+1}\cdots C_{N}
X_{N}(q^Nu_N)(\ol R_{N-1,N}'(1/q^{2N} u_{N-1}u_N))^{t}
\\
&\quad X_{N-1}(q^Nu_{N-1})\cdots  (\ol R_{k+1,k+2} ' (1/q^{2N} u_{k+1}u_{k+2} ))^tX_{k+1}(q^Nu_{k+1})\\
& \qquad\quad C_{k+1}^{-1}\cdots C_{N}^{-1}.
\end{aligned}
\end{equation}

Applying both sides to the vector $v=e_{i_1}\otimes e_{i_2}\otimes\dots \otimes e_{i_{k}}\otimes e_{i_N}\otimes \dots e_{i_{k+1}}$, we see that $v$'s
coefficient in $A_{N}^q \langle S_1,\dots,S_{k} \rangle
\overrightarrow{\prod}_{1\leq i\leq k<j\leq N}\ol R_{ij}^t v$ (the LHS) is $\sdet S_I(u)$ due to the fact that
\begin{equation}
{\prod}_{1\leq i\leq k<j\leq N}\ol R_{ij}(1/u_iu_j)^tv=v.
\end{equation}
The coefficient of $v$ in the RHS of \eqref{e:4.67}
% \begin{equation}
% \begin{split}
% A_{N} C_{k+1}\cdots C_{N}
%X_{N}(q^Nu_N)(R_{N-1,N} ' (1/q^{2N} u_{N-1}u_N))^{t}
%\\
%X_{N-1}(q^Nu_{N-1})\cdots  (R_{k+1,k+2} ' (1/q^{2N} u_{k+1}u_{k+2} ))^tX_{k+1}(q^Nu_{k+1})\\
%C_{k+1}^{-1}\cdots C_{N}^{-1} v
%\end{split}
%\end{equation}
is
$
\sdet_{q^{-1}}X_{I^c} (q^{2-N}u),
$
 which has proved \eqref{e:sklJacobi}.
%Therefore, we have that
%\begin{equation}
%\begin{split}
%\sdet_q S_I(u)=
%\sdet_q S(u)  \sdet_{q^{-1}}X_{I^c} (q^{2-N}u).
%\end{split}
%\end{equation}
\end{proof}

The following is the special case of Theorem \ref{skly jacobi thm}
for $I=\{1,2,\cdots,N\}$.
\begin{corollary}\label{inverse sdet} In the algebra $\mathrm{Y}_{q}^{\tw}(\g_N)$ we have that
  \begin{equation}
    \begin{split}
     \sdet_q S(u)  \sdet_{q^{-1}}X(q^{2-N}u)=1.
    \end{split}
    \end{equation}

\end{corollary}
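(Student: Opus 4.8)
The plan is to obtain this identity as the degenerate end of the Jacobi-type theorem just established (Theorem \ref{skly jacobi thm}), in exactly the way Corollary \ref{inverse det} was read off from the quantum-determinant ratio theorem (Theorem \ref{Jacobi thm qdet}) by setting the index sets to be empty. First I would apply \eqref{e:sklJacobi} with $I=\emptyset$, so that the complement $I^c=\{1,2,\dots,N\}$ is the full index set and the truncated matrix $X_{I^c}(q^{2-N}u)$ is simply the whole matrix $X(q^{2-N}u)$. (Note that it is the empty $I$, not $I=\{1,\dots,N\}$, that produces the inverse identity: the opposite extreme merely returns the tautology $\sdet_q S(u)=\sdet_q S(u)$.)

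With this choice the left-hand side of \eqref{e:sklJacobi} becomes $\sdet_q S_\emptyset(u)$, the Sklyanin determinant of the empty submatrix. I would check directly from the definition of $\langle S_1,\dots,S_m\rangle$ and of the $q$-antisymmetrizer $A_m^q$ that the $m=0$ instance of the construction is the identity operator, so that this empty Sklyanin determinant is the scalar $1$. Substituting into \eqref{e:sklJacobi} then yields
\[
1=\sdet_q S(u)\,\sdet_{q^{-1}} X(q^{2-N}u),
\]
which is the asserted identity; the spectral shift $q^{2-N}u$ and the definition $X(u)=C^{-1}S(q^{-N}u)^{-1}C$ are inherited verbatim from Theorem \ref{skly jacobi thm} and need no further manipulation.

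Since everything follows by specialization, there is no genuine obstacle beyond fixing the empty-determinant normalization; the one thing to handle with care is simply verifying that $\sdet_q S_\emptyset(u)=1$ from the $m=0$ case of the antisymmetrizer construction. As a consistency check one could instead try to force the identity independently from the Sklyanin comatrix relation $\wh S(u)S(q^{2-2N}u)=\sdet_q S(u)I$ together with the automorphism $S(u)\mapsto X(u)$, $q\mapsto q^{-1}$ of $\mathrm Y_q^{\tw}(\g_N)$, but the specialization route above is by far the most economical and is the one I would present.
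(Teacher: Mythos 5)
Your proof is correct and is essentially the paper's own argument: the corollary is obtained by specializing the index set in Theorem \ref{skly jacobi thm}, with the empty Sklyanin minor normalized to $1$. Note that the paper states the specialization as $I=\{1,2,\cdots,N\}$, but your choice $I=\emptyset$ (so that $\sdet_q S_\emptyset(u)=1$ and $X_{I^c}=X$) is the one that actually yields the identity --- the paper's stated choice only returns the tautology you point out, so that phrase is evidently a slip, and your reading is consistent with the $I=J=\emptyset$ specialization used for Corollary \ref{inverse det} in the quantum determinant case.
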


\begin{theorem}
\label{jacobi comatrix}
 For any $1\leq a,b\leq N$, one has that
  \begin{equation}
s^{a,N+1,\cdots,N+M}_{b,N+1,\cdots,N+M}(u) =(-q)^{b-N}\sdet_q(S(u)) \check X_{1,\cdots,\hat{a},\cdots,N,b}^{1,\cdots,N}(q^{2-N-M}u),
  \end{equation}
  where
$X(u)=C ^{-1} S (q^{-N-M}u)^{-1} C $.
\end{theorem}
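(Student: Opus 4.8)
Throughout write $\wt N:=N+M$ and work in the rank $\wt N$ algebra $\mathrm{Y}_q^{\tw}(\g_{N+M})$; since both $S(u)$ and $X(u)$ satisfy (respectively the $q$- and $q^{-1}$-) reflection relations, the orthogonal and symplectic cases can be handled uniformly as in the proofs above. The plan is to run the peeling argument of Theorem~\ref{skly jacobi thm} in rank $\wt N$, but to retain exactly the first $M+1$ tensor slots (the number of rows of the minor on the left) and then to read off an \emph{off-diagonal} coefficient, in the style of Proposition~\ref{Skl comatrix}, so that the right-hand side assembles into an auxiliary minor $\check X$ rather than a principal Sklyanin minor.

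First I would start from
\begin{equation*}
A_{\wt N}^q \langle S_1,\dots,S_{\wt N}\rangle=\sdet_q S(u)\,A_{\wt N}^q,\qquad S_i=S(uq^{2-2i}),
\end{equation*}
and multiply on the right by $S_{\wt N}^{-1},(\ol R^t_{\wt N-1,\wt N})^{-1},S_{\wt N-1}^{-1},\dots,S_{M+2}^{-1}$ exactly as in Theorem~\ref{skly jacobi thm}. This peels off the last $N-1$ factors, and after applying the crossing symmetry relations \eqref{cross rel} together with $X(u)=C^{-1}S(q^{-\wt N}u)^{-1}C$ (so that each peeled $S_j^{-1}$ becomes a $C$-conjugate of $X_j$ and each inverse R-matrix becomes $(\ol R')^t$ conjugated by the $C$'s) it yields an identity of the form
\begin{equation*}
A_{\wt N}^q\langle S_1,\dots,S_{M+1}\rangle\overrightarrow{\prod}_{1\le i\le M+1<j\le \wt N}\ol R^t_{ij}=\sdet_q S(u)\,A_{\wt N}^q\,C_{M+2}\cdots C_{\wt N}\,\Xi\,C_{M+2}^{-1}\cdots C_{\wt N}^{-1},
\end{equation*}
where $\Xi$ is the reversed $q^{-1}$-product of $X_{M+2},\dots,X_{\wt N}$ interlaced with the matrices $(\ol R')^t$. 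Using the analogue for $X$ of $A^q_{\wt N}\langle S_1,\dots,S_{\wt N}\rangle=\langle S_{\wt N},\dots,S_1\rangle A^q_{\wt N}$, the operator $\Xi$ together with the surviving boundary slot $M+1$ is precisely the operator whose entries define the auxiliary minors of $X$ at the shifted parameter $q^{2-\wt N}u$.

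Next I would apply both sides to a vector of the type used in Proposition~\ref{Skl comatrix}, now in rank $\wt N$ and with the common corner block $\{N+1,\dots,N+M\}$ frozen in the leading slots: the column data $b,N+1,\dots,N+M$ sits in the first $M+1$ slots, with $b$ placed so as to serve as the free ($c$-)index of $\check X$, while the trailing slots carry the complementary columns. Because the corner indices are shared between rows and columns, the relevant R-matrices $\ol R^t_{ij}$ fix this vector (as in Proposition~\ref{expansion auxiliary det}), so the left-hand side collapses to $s^{a,N+1,\cdots,N+M}_{b,N+1,\cdots,N+M}(u)$. Comparing the coefficient of the basis vector recording the rows $a,N+1,\dots,N+M$, the block $\Xi$ on the right-hand side delivers exactly $\sdet_q S(u)\,\check X^{1,\cdots,N}_{1,\cdots,\hat a,\cdots,N,b}(q^{2-\wt N}u)$, up to the scalar produced by the diagonal conjugations by the matrices $C$ and by transposing the column index $b$ into the last position of the lower-index string.

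The hard part is this final bookkeeping: one must verify that the chosen vector genuinely isolates the off-diagonal minor $s^{a,\ldots}_{b,\ldots}$ (rather than a principal one), that the boundary slot $M+1$ correctly realizes the $c$-index $b$ of $\check X$, and, most delicately, that the conjugations by $C=\diag((-q)^{(\wt N-1)/2},\dots,(-q)^{-(\wt N-1)/2})$ together with the transpositions needed to place $b$ collapse to the single power $(-q)^{b-N}$ with no residual $q$-factor (in particular the half-integer exponents carried by $C$ must cancel). This is the exact analogue of tracking the factor $(-q)^{N-i}$ in Proposition~\ref{Skl comatrix}, and I expect it to be the only genuinely delicate point; the remainder is a direct transcription of the Jacobi-theorem peeling.
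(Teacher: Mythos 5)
Your argument coincides with the paper's proof: the authors likewise run the peeling procedure of Theorem~\ref{skly jacobi thm} in rank $N+M$, retaining the first $M+1$ tensor slots so that slot $M+1$ becomes the $c$-slot of a size-$N$ auxiliary minor of $X$, and then apply both sides to the vector $e_{N+1}\otimes\cdots\otimes e_{N+M}\otimes e_b\otimes e_N\otimes\cdots\otimes\widehat{e_a}\otimes\cdots\otimes e_1$ and compare coefficients. The bookkeeping of the $(-q)$-powers that you flag as the delicate point is exactly what the paper also leaves implicit, so your proposal is correct and essentially identical in route.
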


  \begin{proof}
    The proof is similar to Jacobi's theorem.
    Using the relation
  \begin{equation}
  \begin{split}
    &A_{N+M} \langle S_1,\dots,S_{N+1} \rangle
    \overrightarrow{\prod}_{1\leq i\leq N,N+2\leq j\leq N}\ol R_{ij}^t\\
  =&
  \sdet_q(S(u)) A_{N+M} C_{N+1}\dots C_{N+M}
  X_{N+M}(q^{N+M}u_{N+M})(\ol R_{N-1,N}')^{t}\cdots\\
  &\cdots X_{N+2}(q^{N+M}u_{N+2})\\
  &\cdot (\ol R_{N+1,N}')^{t}\dots (\ol R_{N+1,N+2}')^{t}  C_{N+1}^{{-1}}\dots C_{N+M}^{-1}
  \end{split}
  \end{equation}
  Applying both sides to the vector
  $e_{N+1}\otimes \dots e_{N+M}\otimes e_{b}\otimes e_N \otimes \dots \otimes \widehat {e_{a}}\dots \otimes e_{1}$
  and comparing the
  coefficient of $e_{1}\otimes e_{2}\otimes  \dots \otimes e_{N+M}$ we obtain that
  \begin{equation}
  s^{a,N+1,\cdots,N+M}_{b,N+1,\cdots,N+M} (u)=(-q)^{b-N}\sdet_q(S(u)) \check X_{1,\cdots,\hat{a},\cdots,N,b}^{1,\cdots,N}(q^{2-N-M}u)
  \end{equation}
  \end{proof}

The following is an analogue of Schur's complement theorem.

\begin{theorem}
We write
\begin{equation}
  S(u)=
  \begin{pmatrix}
      S_{11}(u) &S_{12}(u)\\
      S_{21}(u) &S_{22}(u)
    \end{pmatrix},
\end{equation}
where $S_{11}$ is $N\times N$ matrix and $S_{22}$ is $M\times M$ matrix. In symplectic case, $N$ and $M$ are even.
Then
\begin{equation*}
  \begin{split}
    &\sdet_q(S(u))\\
    =&\sdet_q(S_{11}(u))  \sdet_q(S_{22}(q^{-2N}u)-S_{21}(q^{-2N}u)S_{11}(q^{-2N}u)^{-1}S_{12}(q^{-2N}u))\\
    =&\sdet_q(S_{22}(u))  \sdet_q(S_{22}(q^{-2M}u)-S_{21}(q^{-2M}u)S_{11}(q^{-2M}u)^{-1}S_{12}(q^{-2M}u)).
  \end{split}
  \end{equation*}
\end{theorem}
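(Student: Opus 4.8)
The plan is to mirror the proof of the quantum determinant Schur complement theorem (Theorem~\ref{sdet Schur's complement theorem}), replacing the quantum Jacobi theorem and the inverse determinant formula by their Sklyanin analogues, namely the Sklyanin Jacobi theorem (Theorem~\ref{skly jacobi thm}) and the inverse Sklyanin determinant identity (Corollary~\ref{inverse sdet}). Throughout I work in the reflection algebra of total size $N+M$ and set $I=\{1,\dots,N\}$, so that $I^c=\{N+1,\dots,N+M\}$ and $S_I(u)=S_{11}(u)$. Applying Theorem~\ref{skly jacobi thm} with this $I$ gives
\begin{equation*}
\sdet_q S_{11}(u)=\sdet_q S(u)\,\sdet_{q^{-1}}X_{I^c}(q^{2-N-M}u),\qquad X(u)=C^{-1}S(q^{-N-M}u)^{-1}C .
\end{equation*}

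Next I would identify the block $X_{I^c}$ with the Schur complement. Since $C$ is diagonal it preserves the block decomposition, and the block inverse formula identifies the bottom-right block of $S(v)^{-1}$ with $\Sigma(v)^{-1}$, where $\Sigma(v)=S_{22}(v)-S_{21}(v)S_{11}(v)^{-1}S_{12}(v)$. Hence $X_{I^c}(w)=C_{I^c}^{-1}\Sigma(q^{-N-M}w)^{-1}C_{I^c}$, and because the restricted diagonal matrix $C_{I^c}$ differs from the size-$M$ matrix only by an overall scalar that cancels under conjugation, this equals the $X$-matrix $X_\Sigma$ attached to $\Sigma$, evaluated at $q^{-N}w$. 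Feeding $w=q^{2-N-M}u$ through Corollary~\ref{inverse sdet} for the size-$M$ algebra then converts $\sdet_{q^{-1}}X_{I^c}(q^{2-N-M}u)$ into $\sdet_q\Sigma(q^{-2N}u)^{-1}$, with the several spectral shifts collapsing precisely to $q^{-2N}$. This yields the first identity, and the second follows by interchanging the two blocks, which replaces $N$ by $M$ in the shift.

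The substantive point, which I expect to be the main obstacle, is to give meaning to $\sdet_q S_{11}$ and $\sdet_q\Sigma$ and to license Corollary~\ref{inverse sdet} for $\Sigma$: all of this requires the relevant matrices to satisfy a reflection relation of the smaller size. The key lemma I would isolate is that a principal sub-block $S_{I^c}$ of a solution of the reflection equation again solves the reflection equation of size $|I^c|$ with the restricted $R$- and $R^t$-matrices, the reflection-equation analogue of the fact for the $RTT$ relation used in Theorem~\ref{sdet Schur's complement theorem}. The delicate feature is that $R^t$ does not preserve the index set, since the terms $E_{ji}\otimes E_{ji}$ send $e_i\otimes e_i$ to $e_j\otimes e_j$; however, in both $RS_1R^tS_2$ and $S_2R^tS_1R$ the outermost $R$ preserves the multiset of indices, so any contribution landing in $e_c\otimes e_d$ with $c,d\in I^c$ forces every intermediate index into $I^c$, and the excursions produced by $R^t$ or by $S$ are thereby annihilated. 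Granting this, $S_{11}$ solves the size-$N$ reflection equation, $X_{I^c}$ solves the size-$M$ $q^{-1}$-reflection equation as a sub-block of $X$ (which solves the $q^{-1}$-reflection equation by the automorphism $S(u)\mapsto X(u)$), and $\Sigma$, being $X_{I^c}^{-1}$ up to conjugation and shift, solves the size-$M$ reflection equation by the inverse automorphism.

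Finally, in the symplectic case the hypothesis that $N$ and $M$ are even guarantees that the pairing matrix $G$ is block-diagonal for the splitting, so each diagonal block carries its own symplectic structure and the supplementary relations $s_{i'i'}s_{ii}-q^2 s_{i'i}s_{ii'}=q^3$ persist on the sub-blocks; this is needed only so that $\sdet_q S_{11}$ and $\sdet_q\Sigma$ are genuine symplectic Sklyanin determinants, and is vacuous in the orthogonal case. I expect the restriction lemma for the reflection equation, together with the careful matching of the spectral shifts and of the diagonal conjugations, to be the only places demanding real care; everything else is a transcription of the quantum determinant argument.
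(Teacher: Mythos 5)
Your proposal follows essentially the same route as the paper's proof: apply the Sklyanin Jacobi theorem with $I=\{1,\dots,N\}$, identify the complementary block of $X(u)=C^{-1}S(q^{-N-M}u)^{-1}C$ with the conjugated, shifted inverse of the Schur complement, and convert via Corollary~\ref{inverse sdet} for the size-$M$ reflection algebra; your spectral shifts ($q^{-2N}$, and $N\leftrightarrow M$ for the second identity) match the paper's. The points you flag as needing care — that principal sub-blocks of a reflection-equation solution again solve the smaller reflection equation, and that $C_{I^c}$ agrees with the size-$M$ diagonal matrix up to a scalar killed by conjugation — are indeed used silently in the paper, and your sketches of both are sound.
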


  \begin{proof}
We subdivide $C$ and $X(u)=C ^{-1} S (q^{-N-M}u)^{-1} C $  into blocks
\begin{equation}
  C=
  \begin{pmatrix}
      C_{11} &0\\
      0 &C_{22}
    \end{pmatrix},   \qquad
  X(u)=
  \begin{pmatrix}
      X_{11}(u)&X_{12}(u)\\
      X_{21}(u) &X_{22}(u)
    \end{pmatrix},
 \end{equation}
then
\begin{align} \notag
    X_{11}(u)&={C_{11}}^{-1} \left(S_{11}(q^{-M-N}u)-S_{12}(q^{-M-N}u)S_{22}(q^{-M-N}u)^{-1}\right.\\
                   &\qquad\qquad\qquad \left. S_{21}(q^{-M-N}u)\right) ^{-1}  C_{11},\\ \notag
    X_{22}(u)&= {C_{22}}^{-1}\left(S_{22}(q^{-M-N}u)-S_{21}(q^{-M-N}u)S_{11}(q^{-M-N}u)^{-1}\right.\\
                   &\qquad\qquad\qquad \left. S_{12}(q^{-M-N}u)\right) ^{-1}C_{22}.
\end{align}

It follows form Theorem \ref{skly jacobi thm} that
\begin{equation}
  \begin{split}
    \sdet_q(S_{11}(u))=\sdet_q(S(u))\sdet_{q^{-1}}(X_{22}(q^{2-N-M}u).
  \end{split}
\end{equation}

Since
$X_{22}(u)$ satisfies the $q^{-1}$ reflection relation,
\begin{equation}
  \begin{aligned}
    &{C_{22}}X_{22}(q^M u)^{-1}  {C_{22}}^{-1}\\
    &=S_{22}(q^{-N}u)-S_{21}(q^{-N}u)S_{11}(q^{-N}u)^{-1}S_{12}(q^{-N}u)
  \end{aligned}
\end{equation} satisfies the $q$ reflection relation.
By Corollary \ref{inverse sdet},
\begin{equation}
  \begin{aligned}
&\sdet_q(S_{22}(q^{M-N-2}u)-S_{21}(q^{M-N-2}u)S_{11}(q^{M-N-2}u)^{-1}\\
& \qquad S_{12}(q^{M-N-2}u))\sdet_{q^{-1}} X_{22}(u) =1.
\end{aligned}
\end{equation}
Therefore,
\begin{equation}
\begin{aligned}
    &\sdet_q(S(u))\\
    &=\sdet_q(S_{11}(u))  \sdet_q(S_{22}(q^{-2N}u)-S_{21}(q^{-2N}u)S_{11}(q^{-2N}u)^{-1}\\
    &\qquad\qquad S_{12}(q^{-2N}u)).\\
  \end{aligned}
\end{equation}

The second equation can be proved similarly.
% Similarly,
% \begin{equation}
%   \begin{split}
%     \sdet_q(S(u))\\
%     =\sdet_q(S_{22}(u))  \sdet_q(S_{22}(q^{-2M}u)-S_{21}(q^{-2M}u)S_{11}(q^{-2M}u)^{-1}S_{12}(q^{-2M}u))\\
%   \end{split}
% \end{equation}
  \end{proof}

Using Jacobi's theorem we obtain the
following analogue of Cayley's complementary identity for the Sklyanin determinant.
\begin{theorem}\label{skly cayley thm }
Suppose a minor identity for the Sklyanin determinant is given:
\begin{equation}\label{skly identity}
\sum_{i=1}^{k}b_i \prod_{j=1}^{m_i}
\sdet_{q}  S_{I_{ij}}(u)=0,
\end{equation}
where $I_{ij}'s$ are subsets of $[1,N]$  and $b_i\in \mathbb C(q)$.
Then the following identity holds
\begin{equation}
\sum_{i=1}^{k} b_i'\prod_{j=1}^{m_i}
\sdet_q S(u)^{-1}\sdet_q S_{I_{ij}^c}(u)=0,
\end{equation}
where $b_i'$ is obtained from $b_i$ by replacing $q$ by $q^{-1}$.
\end{theorem}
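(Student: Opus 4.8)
The plan is to follow the same route as the quantum-determinant Cayley identity proved earlier in this section (the $\det_q$ version), replacing the role played there by $L^{\pm}(u)^{-1}$ with the matrix $X(u)=C^{-1}S(q^{-N}u)^{-1}C$. Three ingredients are available: the automorphism $S(u)\mapsto X(u)$, $q\mapsto \qin$ of $\mathrm Y_q^{\tw}(\g_N)$; Jacobi's theorem for Sklyanin determinants (Theorem \ref{skly jacobi thm}); and the fact that $\sdet_q S(u)$ is an invertible formal series whose coefficients, after dividing by $\gamma_N(u)$, are central (the series $c(u)$), so that $\sdet_q S(u)^{-1}$ is well defined and the products below are unambiguous.

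First I would transport the hypothesis to the matrix $X$. Since $X(u)$ satisfies the $\qin$-reflection relation \eqref{inverse reflection rel} together with the requisite initial conditions, the given minor identity, being a formal consequence of the reflection relation, holds verbatim with $S$ replaced by $X$ and $q$ by $\qin$; equivalently, one applies the automorphism $S(u)\mapsto X(u)$, $q\mapsto\qin$, which carries each principal Sklyanin minor $\sdet_q S_I(u)$ to the corresponding $\qin$-minor $\sdet_{q^{-1}}X_I(u)$ and each scalar $b_i\in\mathbb C(q)$ to $b_i'$. This yields
\[
\sum_{i=1}^{k}b_i'\prod_{j=1}^{m_i}\sdet_{q^{-1}}X_{I_{ij}}(u)=0.
\]

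Next I would use Jacobi's theorem to turn the $\qin$-minors of $X$ back into $q$-minors of $S$. Reading Theorem \ref{skly jacobi thm} with $I$ replaced by $I^c$ gives $\sdet_q S_{I^c}(u)=\sdet_q S(u)\,\sdet_{q^{-1}}X_{I}(q^{2-N}u)$, hence
\[
\sdet_{q^{-1}}X_I(u)=\sdet_q S(q^{N-2}u)^{-1}\,\sdet_q S_{I^c}(q^{N-2}u).
\]
Substituting this into the displayed $X$-identity and then replacing $u$ by $q^{2-N}u$ produces exactly
\[
\sum_{i=1}^{k}b_i'\prod_{j=1}^{m_i}\sdet_q S(u)^{-1}\,\sdet_q S_{I_{ij}^c}(u)=0,
\]
which is the assertion. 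Note that the absence here of any prefactor $(-q)^{\bullet}$ (in contrast to the $\det_q$ case) already reflects that only principal minors $S_I$ occur, so that the inversion numbers $l(I,I^c)$ cancel; this is why no such factor appears in Theorem \ref{skly jacobi thm} to begin with.

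The main obstacle is the verification underlying the first step: that the automorphism indeed sends $\sdet_q S_I(u)$ to $\sdet_{q^{-1}}X_I(u)$ on the nose. This requires tracking, through the defining expression $A_m^q\langle S_1,\dots,S_m\rangle$, how the $q$-antisymmetrizer and the factors $\ol R^t$ pass to their $\qin$-counterparts and how the spectral specializations $u_i=uq^{-2i+2}$ are matched; the needed compatibility is precisely what is encoded in \eqref{inverse reflection rel} and is already used implicitly in Corollary \ref{inverse sdet}, so once this bookkeeping is recorded the remaining manipulations are purely formal. A secondary point is that $\sdet_q S(u)^{-1}$ may be moved freely past the surviving factors, which is guaranteed by the centrality of the coefficients of $c(u)$.
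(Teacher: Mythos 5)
Your proposal is correct and takes essentially the same route as the paper's proof: apply the minor identity to $X(u)=C^{-1}S(q^{-N}u)^{-1}C$, which satisfies the $q^{-1}$-reflection relation, then use Jacobi's theorem for Sklyanin determinants to rewrite each $\sdet_{q^{-1}}X_{I_{ij}}(u)$ as $\sdet_q S(q^{N-2}u)^{-1}\sdet_q S_{I_{ij}^c}(q^{N-2}u)$, and finally replace $u$ by $q^{2-N}u$. The extra remarks you add (on the centrality of the Sklyanin determinant coefficients and on the bookkeeping behind the substitution $S\mapsto X$, $q\mapsto q^{-1}$) only make explicit what the paper leaves implicit.
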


\begin{proof}
The matrix
$X(u)=C ^{-1} S (q^{-N}u)^{-1} C $ satisfies the $q^{-1}$ reflection relations. Applying the minor identity to $X(u)$ we get that
\begin{equation}\label{cayley eq1}
\sum_{i=1}^{k}b_i' \prod_{j=1}^{m_i}
\sdet_{q^{-1}}(X_{I_{ij}}(u))=0.
\end{equation}
It follows from Theorem \ref{skly jacobi thm} that
\begin{equation}
  \sum_{i=1}^{k} b_i'\prod_{j=1}^{m_i}
  \sdet_q S(q^{N-2}u)^{-1}\sdet_q S_{I_{ij}^c}(q^{N-2}u)=0,
  \end{equation}
The proof is completed by replacing $u$ with $q^{2-N}u$.

\end{proof}

The following theorem is an analogue of Muir's law for the Sklyanin determinant.
\begin{theorem}\label{skly muir law}
Suppose there is a minor Sklyanin determinant identity
\begin{equation}
\sum_{i=1}^{k}b_i \prod_{j=1}^{m_i}
\sdet_{q} S_{I_{ij}}(u) =0,
\end{equation}
where $I_{ij}'s$ are subsets of $I=\{1,2,\dots,N\}$ and $b_i\in \mathbb C(q)$.
Let $J$ be the set $\{N,\dots, N+M\}$. Then the following identity holds
\begin{equation}
\sum_{i=1}^{k} b_i\prod_{j=1}^{m_i}
\sdet_{q} (S_{ J})^{-1}
\sdet_{q} (S_{I_{ij}\cup J})=0.
\end{equation}
\end{theorem}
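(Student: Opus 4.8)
The plan is to derive the identity by two successive applications of Cayley's complementary identity for the Sklyanin determinant (Theorem~\ref{skly cayley thm }), mirroring the proof of Muir's law for quantum determinants (Theorem~\ref{qdet muir law}): one complementation inside the small index set $T=\{1,\dots,N\}$ and a second inside the enlarged set $\widetilde T=T\cup J$. Throughout I write $\sdet_q S_A(u)$ for the principal Sklyanin minor on an index set $A$, and I use that the given identity, being a universal consequence of the reflection relation, remains valid for the principal block of $S$ indexed by $T$ viewed inside the larger algebra $\mathrm Y_q^{\tw}(\g_{N+M})$. This is legitimate because $R(u,v)$ and $R^t(u,v)$ preserve the initial segment $T$, so that $S_T$ again satisfies a reflection relation of type $\g_N$ (in the symplectic case one uses that $N$ and $M$ are even, so the pairing $i\mapsto i'$ restricts to $T$).

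First I would apply Theorem~\ref{skly cayley thm } within $T$ to the hypothesis $\sum_i b_i\prod_j \sdet_q S_{I_{ij}}(u)=0$. Since the complement of $I_{ij}$ in $T$ is $T\setminus I_{ij}$ and the full determinant is $\sdet_q S_T(u)$, this produces the intermediate identity
\begin{equation}
\sum_{i=1}^{k} b_i'\,\sdet_q S_T(u)^{-m_i}\prod_{j=1}^{m_i}\sdet_q S_{T\setminus I_{ij}}(u)=0,
\end{equation}
where $b_i'$ is $b_i$ with $q$ replaced by $q^{-1}$ and the spectral shift is absorbed as in the statement of Theorem~\ref{skly cayley thm }. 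Next I would apply Cayley's identity a second time, now within $\widetilde T=T\cup J$, to this intermediate identity. Complementation in $\widetilde T$ sends $\sdet_q S_A(u)\mapsto \sdet_q S_{\widetilde T}(u)^{-1}\sdet_q S_{\widetilde T\setminus A}(u)$ and $b_i'\mapsto b_i$; applied to the factors above it gives $\sdet_q S_T(u)^{-1}\mapsto \sdet_q S_{\widetilde T}(u)\,\sdet_q S_J(u)^{-1}$ (using $\widetilde T\setminus T=J$) and $\sdet_q S_{T\setminus I_{ij}}(u)\mapsto \sdet_q S_{\widetilde T}(u)^{-1}\sdet_q S_{I_{ij}\cup J}(u)$ (using $\widetilde T\setminus(T\setminus I_{ij})=I_{ij}\cup J$). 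The crucial point is that for each $i$ the factor $\sdet_q S_{\widetilde T}(u)^{m_i}$ coming from the inverse term cancels against the factor $\sdet_q S_{\widetilde T}(u)^{-m_i}$ coming from the $m_i$ complementary minors, and the double conjugation returns $b_i'$ to $b_i$. What survives is precisely
\begin{equation}
\sum_{i=1}^{k} b_i\prod_{j=1}^{m_i}\sdet_q S_J(u)^{-1}\,\sdet_q S_{I_{ij}\cup J}(u)=0,
\end{equation}
which is the desired identity.

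The step I expect to be the main obstacle is the second application of Cayley's identity, because the intermediate identity is no longer a plain product of principal minors: it carries the inverse factor $\sdet_q S_T(u)^{-1}$, to which Theorem~\ref{skly cayley thm } does not literally apply. I would therefore observe that the proof mechanism of that theorem—substitute the $q^{-1}$-reflection matrix $X(u)=C^{-1}S(q^{-(N+M)}u)^{-1}C$ and convert each $\sdet_{q^{-1}}X_A$ into complementary Sklyanin minors of $S$ via Jacobi's theorem (Theorem~\ref{skly jacobi thm}) and Corollary~\ref{inverse sdet}—applies verbatim to any Laurent-monomial expression in principal minors, an inverse minor being sent to the reciprocal of the corresponding Jacobi product. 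Once this extension is in place, the remaining work is the bookkeeping of the two complementations and the cancellation described above, together with checking that the spectral-parameter shifts introduced by the two applications (at sizes $N$ and $N+M$) are each absorbed, so that both steps can be carried out at the single argument $u$.
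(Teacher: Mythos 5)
Your proposal follows essentially the same route as the paper: two successive applications of Cayley's complementary identity (Theorem~\ref{skly cayley thm }), first with respect to $T=\{1,\dots,N\}$ and then with respect to $T\cup J$, with the two $q\leftrightarrow q^{-1}$ substitutions cancelling. Your additional remark about extending Cayley's identity to Laurent monomials in principal minors (to handle the inverse factor $\sdet_q S_T(u)^{-1}$ in the intermediate identity) addresses a point the paper passes over silently, and is a sound way to justify that step.
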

\begin{proof}
Applying Cayley's complementary identity respect to the set $I$, we get that
\begin{equation}\label{obtain by jacobi thm}
\sum_{i=1}^{k} b_i'\prod_{j=1}^{m_i}
\sdet_q S_{I}(u)^{-1}
\sdet_{q} S_{I\setminus I_{ij}}(u)=0,
\end{equation}
Applying Cayley's
complementary identity respect to the set $I\cup J$,
we obtain that
\begin{equation}
\sum_{i=1}^{k} b_i\prod_{j=1}^{m_i}
\sdet_q S_{J}(u)^{-1}
\sdet_{q} S_{I_{ij}\cup J}(u)
=0.
\end{equation}

\end{proof}

In the following we give an analogue of Muir's identities for the Sklyanin determinant.
% Then we have the relation in $(\mathrm{End}\mathbb{C}^{N})^{\otimes r}$
% \begin{equation}
% R(1,  q^{2},  \ldots,  q^{2r-2})=\prod_{0\leq i<j\leq r-1}(q^{-2i}-q^{-2j})A_{r}^{q}
%  \end{equation}

% It implies that
%  \begin{equation}
% A_{r}^{q}T_{1} (u)\cdots T_{r}(q^{-2r+2}u)=T_{r}(q^{-2r+2}u)\cdots T_{1} (u)A_{r}^{q}
%  \end{equation}

\begin{lemma}\label{A H relation} One has that
\begin{equation}\label{H relation}
    \begin{aligned}
A_{\{2,\dots,m\}}^q R^{t}_{12}(u)&\cdots R^{t}_{1,m}(uq^{2m-2})\\
    &=  A_{\{2,\dots,m\}}^q R^{t}_{12}(u)\cdots R^{t}_{1,m}(uq^{2m-2})A_{\{2,\dots,m\}}^q,\\
 R^{t}_{1,m}(u) &\cdots R^{t}_{m-1,m}(uq^{2m-2})H_{m-1}^q\\
    &= H_{m-1}^q R^{t}_{12}(u)\cdots R^{t}_{1,m}(q^{2m-2}u)H_{m-1}^q,\\
    \end{aligned}
    \end{equation}
 where   $A_{\{2,\dots,m\}}^q$ is the  $q$-antisymmetrizer numbered by indices $\{2,\dots,m\}$.
  \end{lemma}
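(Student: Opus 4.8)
The plan is to read both displays in \eqref{H relation} as \emph{fusion} (``sandwich'') relations: they say that the product of partially transposed $R$-matrices can absorb the relevant $q$-(anti)symmetrizer on one side once that projector has been applied on the other side. The engine will be a transposed Yang--Baxter relation extracted from \eqref{YBE}, the fusion description of the projectors via \eqref{e:q-anti} (and its symmetric analogue), and the idempotency $A_m^q A_m^q=A_m^q$, $H_m^q H_m^q=H_m^q$.

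First I would record the transposed Yang--Baxter relation. Applying the partial transpose $t_1$ to \eqref{YBE} in the first tensor factor and using that $t_1(R_{12}R_{13})=R^{t_1}_{13}R^{t_1}_{12}$ (the two legs carrying the first space reverse), while $R_{23}(v,w)$ is fixed by $t_1$ and passes through unchanged, one gets
$$R^{t_1}_{13}(u,w)\,R^{t_1}_{12}(u,v)\,R_{23}(v,w)=R_{23}(v,w)\,R^{t_1}_{12}(u,v)\,R^{t_1}_{13}(u,w).$$
The only point requiring care is the index bookkeeping of $t_1$, which is not an algebra anti-automorphism; a short entrywise check confirms that exactly the two transposed factors reverse. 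Relabelling $(2,3)\mapsto(a,b)$, this is precisely the three-space identity needed to commute a single $R_{ab}$, $2\le a<b\le m$, past a pair $R^{t}_{1a}R^{t}_{1b}$, reversing their order.

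For the first identity, write $P=R^{t}_{12}(u)\cdots R^{t}_{1,m}(uq^{2m-2})$. By \eqref{e:q-anti} the antisymmetrizer $A_{\{2,\dots,m\}}^q$ over the spaces $\{2,\dots,m\}$ is, up to a nonzero scalar, the ordered product of the $R_{ab}$ at the fusion values, whose $q^{2}$-spacing is matched by the arguments $uq^{2j-2}$ appearing in $P$ (only the ratios of consecutive spectral parameters matter, so the single-variable normalization $R^{t}(x)$ causes no trouble). Sliding this fusion product rightward through $P$ by repeated use of the transposed Yang--Baxter relation reverses $P$ and deposits the projector on the other side, giving $A_{\{2,\dots,m\}}^q\,P=\bar P\,A_{\{2,\dots,m\}}^q$ with $\bar P=R^{t}_{1,m}(uq^{2m-2})\cdots R^{t}_{12}(u)$; this is exactly the relation \eqref{transpose relation} in the present normalization. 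Multiplying on the right by $A_{\{2,\dots,m\}}^q$ and using idempotency then yields $A_{\{2,\dots,m\}}^q P A_{\{2,\dots,m\}}^q=\bar P A_{\{2,\dots,m\}}^q=A_{\{2,\dots,m\}}^q P$, which is the assertion.

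The second identity is the symmetrizer counterpart and I would prove it by the same sliding argument applied to $H_{m-1}^q$ over $\{1,\dots,m-1\}$, sandwiching with a second factor of $H_{m-1}^q$ and invoking idempotency. Here one either uses the symmetrizer fusion (the analogue of \eqref{e:q-anti} at the values $1,q^{2},\dots$, which is proportional to $H_{m-1}^q$) in place of the antisymmetrizer fusion, or reduces to the antisymmetric case through the complementarity $A_2^q+H_2^q=1$ together with a vanishing of the form $A_2^q(\cdots)H_2^q=0$, exactly as in the earlier lemma for $H_m^q$. The hard part will be the first step: obtaining the transposed Yang--Baxter relation with the correct order reversal, and then verifying that the spectral shifts $uq^{2j-2}$ are precisely the fusion values so that \eqref{e:q-anti} and its symmetric analogue apply; once these are secured, the remainder is routine manipulation with the idempotents.
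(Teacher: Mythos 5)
Your treatment of the first identity is correct and essentially the paper's own route: both rest on the $t_1$-transposed Yang--Baxter relation $R^{t_1}_{13}(u,w)R^{t_1}_{12}(u,v)R_{23}(v,w)=R_{23}(v,w)R^{t_1}_{12}(u,v)R^{t_1}_{13}(u,w)$ together with idempotency of $A^q_{\{2,\dots,m\}}$. The only difference is cosmetic: you slide the full fused antisymmetrizer through the product at once, obtaining the global exchange relation $A^q_{\{2,\dots,m\}}P=\bar P A^q_{\{2,\dots,m\}}$ (which is exactly the paper's relation \eqref{transpose relation}) and then right-multiply by the idempotent, whereas the paper works locally, showing that $A^q_{\{2,\dots,m\}}P$ changes sign under right multiplication by each $P^q_{i,i+1}$ and then invoking the sum formula for $A^q$. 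Both are valid and both reduce to the same rank-two relation at the fusion point $v=1$, $w=q^{-2}$.

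For the second identity your plan is under-specified at the one place where it genuinely differs from the first. There the two transposed factors $R^{t}_{1a}$ and $R^{t}_{1b}$ are transposed in their \emph{common} leg (space $1$), so the $t_1$-transposed Yang--Baxter relation applies directly after relabelling. In the second identity the factors are $R^{t}_{am},R^{t}_{bm}$ with the common leg $m$ \emph{untransposed} and the transpositions sitting on the two distinct legs $a\neq b$; a mere relabelling of your three-space identity does not produce the required exchange relation with $R_{ab}$. The paper obtains it by transposing the Yang--Baxter equation in both the first and second factors and using $R_{12}^{t_1t_2}(u,v)=P_{12}R_{12}(u,v)P_{12}$, which yields $R_{12}(u,v)R^{t}_{13}(v,w)R^{t}_{23}(u,w)=R^{t}_{23}(u,w)R^{t}_{13}(v,w)R_{12}(u,v)$; only then does the specialization $u=1$, $v=q^{-2}$ give the local relation $A_2^qR^t_{13}(u)R^t_{23}(q^2u)=R^t_{23}(q^2u)R^t_{13}(u)A_2^q$, from which the symmetrizer statement follows via $A_2^q+H_2^q=1$ and $A_2^qH_2^q=0$ exactly as you suggest in your fallback. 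So your outline closes once you derive this doubly-transposed relation; as written, the claim that the second identity follows ``by the same sliding argument'' skips the step that actually requires a new computation.
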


\begin{proof}
 Taking transposition with respect to the first factor to Yang-Baxter equation we have
  \begin{equation}
    \begin{split}
    R_{23}(v,w)R_{12}^t(u,v)R_{13}^t(u,w)=R_{13}^t(u,w)R_{12}^t(u,v)R_{23}(v,w).
    \end{split}
  \end{equation}
  Taking $v=1,w=q^{-2}$ we have that
  \begin{equation}
    \begin{split}
  A_{\{2,3\}}^q R_{12}^t(u)R_{13}^t(uq^2)=R_{13}^t(uq^2)R_{12}^t(u)A_{\{2,3\}}^q.
    \end{split}
  \end{equation}

Since $(A_2^q )^2=A_2 ^q $, we have
  \begin{equation}
    \begin{aligned}
   A_{\{2,3\}}^q R_{12}^t(u)R_{13}^t(uq^2)  A_{\{2,3\}}^q &=   A_{\{2,3\}}^q R_{12}^t(u)R_{13}^t(uq^2),\\
     A_{\{2,3\}}^q R_{12}^t(u)R_{13}^t(uq^2)P_{23}^q&=-   A_{\{2,3\}}^q R_{12}^t(u)R_{13}^t(uq^2).
    \end{aligned}
  \end{equation}

  It implies that
  \begin{equation}\label{H relation}
    \begin{aligned}
&A_{\{2,\dots,m\}}^q R^{t}_{12}(u)\cdots R^{t}_{1,m}(uq^{2m-2})
P_{i,i+1}\\
&=-A_{\{2,\dots,m\}}^q R^{t}_{12}(u)\cdots R^{t}_{1,m}(uq^{2m-2})
    \end{aligned}
    \end{equation}
for any $1\leq i\leq m-1$. Using the formula for $A_m^q$ we obtain the first equation.

Taking transpositions with respect to the first and second factors consecutively to the Yang-Baxter equation we have
\begin{equation}
  \begin{split}
R_{13}^t(u,w) R^t_{23}(v,w)  R_{12}^{t_1t_2}(u,v)= R_{12}^{t_1t_2}(u,v) R^t_{23}(v,w)  R_{13}^t(u,w) .
  \end{split}
\end{equation}
Using the relation $R_{12}^{t_1t_2}(u,v)=P_{12}R_{12} (u,v) P_{12}$, we have
\begin{equation}
  \begin{split}
 R_{12} (u,v) R^t_{13}(v,w)  R_{23}^t(u,w)=R_{23}^t(u,w) R^t_{13}(v,w)  R_{12} (u,v).
  \end{split}
\end{equation}

Taking $u=1,v=q^{-2}$ we have that
\begin{equation}
  \begin{split}
 A_2 ^q R^t_{13}(q^{-2}w^{-1})  R_{23}^t(w^{-1})=R_{23}^t(w^{-1}) R^t_{13}(q^{-2}w^{-1})  A_2^q.
  \end{split}
\end{equation}
Replacing $w$ with $q^{-2}u^{-1}$, we get
\begin{equation}
  \begin{split}
 A_2^q R^t_{13}(u)  R_{23}^t(q^2 u)=R_{23}^t(q^2 u) R^t_{13}(u)  A_2^q.
  \end{split}
\end{equation}
Using the relation  $A_2^q+H_2^q=1$  and $(A_2^q)^2=A_2^q$, we have
\begin{equation}
  \begin{split}
 P_{12}^q R^t_{13}(u)  R_{23}^t(q^2 u)H_2^q=R^t_{13}(u)  R_{23}^t(q^2 u)H_2^q.
  \end{split}
\end{equation}
Then the second equation can be obtained by the same arguments of the first equation.
\end{proof}

The following version of MacMahon's theorem holds.
\begin{theorem}
One has that
\begin{align}
\sum_{r=0}^{k}(-1)^r tr_{1,\ldots,k} H_{r}^qA_{\{r+1,\ldots,k\}}^q \langle S_1,\dots,S_k\rangle=0,\\
\sum_{r=0}^{k}(-1)^r tr_{1,\ldots,k}A_r^q H_{\{r+1,\ldots,k\}}^q \langle S_1,\dots,S_k\rangle=0,
\end{align}
where $A_{\{r+1,\ldots,k\}}^q$ and $H_{\{r+1,\ldots,k\}}^q$ denote the antisymmetrizer and symmetrizer over the copies of $End(\mathbb{C}^k)$ labeled by $\{r+1,\ldots,k\}$.
\end{theorem}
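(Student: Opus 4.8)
The plan is to mirror the telescoping argument used for the quantum affine MacMahon theorem in Section~3.2, with Lemma~\ref{A H relation} playing the role that the symmetrizer lemma played there. The engine of the proof is a trace-replacement identity analogous to \eqref{qdet trace replacement}, namely
\begin{equation*}
\begin{aligned}
&tr_{1,\ldots,k} H^q_r A^q_{\{r+1,\ldots,k\}} \langle S_1,\dots,S_k\rangle\\
&=\frac{r(k-r+1)}{k}\,tr_{1,\ldots,k} H^q_r A^q_{\{r,\ldots,k\}} \langle S_1,\dots,S_k\rangle\\
&\quad+\frac{(r+1)(k-r)}{k}\,tr_{1,\ldots,k} H^q_{r+1} A^q_{\{r,\ldots,k\}} \langle S_1,\dots,S_k\rangle.
\end{aligned}
\end{equation*}
I would first record the two identities in the group algebra $\mathbb{C}[\mathfrak{S}_k]$,
\begin{equation*}
(k-r+1)A^q_{\{r,\ldots,k\}} = A^q_{\{r+1,\ldots,k\}} - (k-r)A^q_{\{r+1,\ldots,k\}} P^q_{r,r+1} A^q_{\{r+1,\ldots,k\}},
\end{equation*}
\begin{equation*}
(r+1)H^q_{r+1} = H^q_r + r\,H^q_r P^q_{r,r+1} H^q_r,
\end{equation*}
which are exactly those invoked in the affine case and are independent of the algebra.

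The decisive step is to show that the two cross terms coincide under the trace, i.e.
\begin{equation*}
\begin{aligned}
&tr_{1,\ldots,k} H^q_r A^q_{\{r+1,\ldots,k\}} P^q_{r,r+1} A^q_{\{r+1,\ldots,k\}} \langle S_1,\dots,S_k\rangle\\
&=tr_{1,\ldots,k} H^q_r P^q_{r,r+1} A^q_{\{r+1,\ldots,k\}} \langle S_1,\dots,S_k\rangle,
\end{aligned}
\end{equation*}
together with the mirror statement built from $H^q_r P^q_{r,r+1} H^q_r$. Here I would first commute $H^q_r$ (acting on $\{1,\dots,r\}$) past $A^q_{\{r+1,\ldots,k\}}$ (acting on the disjoint tail $\{r+1,\dots,k\}$), then use the cyclicity of $tr_{1,\ldots,k}$ to transport the leading antisymmetrizer to the far right, and finally absorb it using $\bigl(A^q_{\{r+1,\ldots,k\}}\bigr)^2 = A^q_{\{r+1,\ldots,k\}}$ and Lemma~\ref{A H relation}. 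Lemma~\ref{A H relation}, whose first index plays the role of the head/tail boundary, is precisely what lets the antisymmetrizer on the tail commute through the head-to-tail string of connectors $\ol R^t_{ab}$ with $a\le r<b$ and the $S$-factors woven into $\langle S_1,\dots,S_k\rangle$; the remaining tail factors are handled by the reflection relation that already yields $A_m^q \langle S_1,\dots,S_m\rangle = \sdet_q S(u)\,A_m^q$.

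I expect the cross-term step to be the main obstacle. Unlike the affine MacMahon theorem, where $\langle\cdots\rangle$ was simply $L^\pm_1(u)\cdots L^\pm_k(q^{-2k+2}u)$ and the antisymmetrizer slid through freely, the Sklyanin bracket interleaves the $\ol R^t_{ab}$ with the $S_i$, so one must track carefully which connectors are absorbed by $A^q_{\{r+1,\ldots,k\}}$ and which pass through, the Yang--Baxter bookkeeping being exactly what Lemma~\ref{A H relation} packages. Once the trace-replacement identity is in hand, the conclusion is immediate: writing $V_r = tr_{1,\ldots,k} H^q_r A^q_{\{r,\ldots,k\}} \langle S_1,\dots,S_k\rangle$ and $T_r$ for the $r$-th summand, the identity reads $T_r = \frac{r(k-r+1)}{k}V_r + \frac{(r+1)(k-r)}{k}V_{r+1}$, so in $\sum_{r=0}^k(-1)^r T_r$ the coefficient of each $V_s$ receives equal and opposite contributions from $r=s$ and $r=s-1$ and the sum telescopes to zero. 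The second identity, with $A^q_r H^q_{\{r+1,\ldots,k\}}$, follows verbatim using the $H$-part of Lemma~\ref{A H relation} and the transposed group-algebra identities.
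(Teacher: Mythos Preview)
Your proposal is correct and follows essentially the same approach as the paper's own proof: both establish the trace-replacement identity
\[
T_r=\tfrac{r(k-r+1)}{k}V_r+\tfrac{(r+1)(k-r)}{k}V_{r+1}
\]
via the two group-algebra recursions for $A^q_{\{r,\ldots,k\}}$ and $H^q_{r+1}$, reduce the cross terms to a common expression using cyclicity of the trace together with the idempotency of $A^q$ and $H^q$, and then telescope. The only point where the paper is slightly more explicit is that it records the intermediate relations $A^q_{\{r+1,\ldots,k\}}\langle S_1,\dots,S_k\rangle=A^q_{\{r+1,\ldots,k\}}\langle S_1,\dots,S_k\rangle A^q_{\{r+1,\ldots,k\}}$ and $H^q_r\langle S_1,\dots,S_k\rangle=H^q_r\langle S_1,\dots,S_k\rangle H^q_r$, obtained by factoring $\langle S_1,\dots,S_k\rangle=\langle S_1,\dots,S_r\rangle\prod_{i\le r<j}\ol R^t_{ij}\,\langle S_{r+1},\dots,S_k\rangle$ and applying Lemma~\ref{A H relation} to the middle block; you allude to exactly this but could state it as a standalone lemma to make the cross-term computation transparent.
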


\begin{proof}
In the following we show that
\begin{equation}\label{sdet trace replacement}
\begin{aligned}
&tr_{1,\ldots,k}H_r^q A_{\{r+1,\ldots,k\}}^q \langle S_1,\dots,S_k\rangle\\
=&tr_{1,\ldots,k}  \frac{r(k-r+1) }{k}H_r^q A_{\{r,\ldots,k\}}^q \langle S_1,\dots,S_k\rangle\\
 &\quad +tr_{1,\ldots,k} H_r^q A_{\{r,\ldots,k\}}^q \frac{(k-r) (r+1) }{k}H_{r+1}^q A_{\{r+1,\ldots,k\}}^q
\langle S_1,\dots,S_k\rangle.
\end{aligned}
\end{equation}

The element $\langle S_1,\dots,S_k\rangle$ can be written as
\begin{equation}
 \langle S_1,\dots,S_r\rangle
\prod_{1\leq i\leq r \atop r+1\leq j\leq k}
\ol R_{ij}^t
\langle S_{r+1},\dots,S_k\rangle ,
\end{equation}
where the product is taken in the lexicographical order on the pairs $(i,j)$.
It follows from Lemma \ref{A H relation}
that
\begin{align}
A_{\{r+1,\ldots,k\}}^q \prod_{1\leq i\leq r \atop r+1\leq j\leq k}
\ol R_{ij}^t
&=A_{\{r+1,\ldots,k\}}^q \prod_{1\leq i\leq r \atop r+1\leq j\leq k}
\ol R_{ij}^t
A_{\{r+1,\ldots,k\}}^q ,\\
 \prod_{1\leq i\leq r \atop r+1\leq j\leq k}
\ol R_{ij}^tH_r^q
&=H_r^q \prod_{1\leq i\leq r \atop r+1\leq j\leq k}
\ol R_{ij}^t
H_r^q.
\end{align}
Then
\begin{align}
A_{\{r+1,\ldots,k\}}^q \langle S_1,\dots,S_k\rangle
&=A_{\{r+1,\ldots,k\}}^q   \langle S_1,\dots,S_k\rangle
A_{\{r+1,\ldots,k\}}^q ,\\
   H_r^q \langle S_1,\dots,S_k\rangle
&=H_r^q  \langle S_1,\dots,S_k\rangle
H_r^q.
\end{align}
By the relation in the  group algebra of $\mathfrak{S}_{k}$,
\begin{align}
 (k-r+1) A_{\{r,\ldots,k\}}^q
  &=A_{\{r+1,\ldots,k\}}^q -(k-r) A_{\{r+1,\ldots,k\}}^q P^q_{r,r+1} A_{\{r+1,\ldots,k\}}^q ,\\
  (r+1)H_{r+1}^q
  &=H_{r}^q +rH_{r}^q P^q_{r,r+1}H_{r}^q.
  \end{align}

Thus we have that
\begin{equation}
\begin{aligned}
&tr_{1,\ldots,k}  H_r A_{\{r+1,\ldots,k\}}^q P^q_{r,r+1} A_{\{r+1,\ldots,k\}}^q
 \langle S_1,\dots,S_k\rangle\\
 &=tr_{1,\ldots,k}  H_r^q  P^q_{r,r+1} A_{\{r+1,\ldots,k\}}^q
 \langle S_1,\dots, S_k\rangle A_{\{r+1,\ldots,k\}}^q\\
 &=tr_{1,\ldots,k}  H_r^q P^q_{r,r+1} A_{\{r+1,\ldots,k\}}^q
 \langle S_1,\dots,S_k\rangle \\
\end{aligned}
\end{equation}
Similarly,
\begin{equation}
\begin{split}
&tr_{1,\ldots,k}  H_{r}^q P^q_{r,r+1} H_{r}^q  A_{\{r+1,\ldots,k\}}^q
\langle S_1,\dots, S_k\rangle\\
=&tr_{1,\ldots,k}  P^q_{r,r+1} H_{r}^q A_{\{r+1,\ldots,k\}}^q
\langle S_1,\dots, S_k\rangle H_{r}^q\\
=&tr_{1,\ldots,k}  P^q_{r,r+1} A_{\{r+1,\ldots,k\}}^q
\langle S_1,\dots,S_k\rangle H_{r}^q\\
=&tr_{1,\ldots,k}  H_{r}^q P^q_{r,r+1} A_{\{r+1,\ldots,k\}}^q
\langle S_1,\dots,S_k\rangle.
\end{split}
\end{equation}
These imply the equation \eqref{sdet trace replacement}. Therefore we have shown the first equation.
The second equation can be proved by the same arguments.
\end{proof}

\subsection{Sylvester's theorem for the Sklyanin determinant}
The following is an analog of Sylvester's theorem for the Sklyanin determinant.
\begin{theorem}
 Let
 $I=\{1,\cdots,N\}$ ,
 $J=\{N+1,\cdots,N+M\}$, where $N$ and $M$ are positive integers such that $N$ and $M$ are even in the symplectic case.
Then the mapping $s_{ij}(u)\mapsto  s^{i,N+1,\cdots,N+M}_{j,N+1,\cdots,N+M}(q^Mu)$
defines an algebra morphism $\mathrm Y_q^{\tw}(\g_N)\rightarrow \mathrm Y_q^{\tw}(\g_{N+M})$. Denote $\wt{s}_{ij}(u)$ by the image of  $s_{ij}(u)$.
 Then
\begin{equation}
 \sdet_q (\wt S(u))= \Pi_{i=1}^{N-1} \sdet_q(S_{J}(q^{M-2i}u))  \sdet_q(S(q^Mu)).
    \end{equation}
\end{theorem}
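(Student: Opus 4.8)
The plan is to follow the template of the quantum determinant version, Theorem~\ref{qdet sylvester}, substituting the Sklyanin analogues of each ingredient: the explicit expansion of Theorem~\ref{sdet formula}, Muir's law for Sklyanin determinants (Theorem~\ref{skly muir law}), and the centrality of the Sklyanin determinant. First I would check that $\wt S(u)=\big(s^{i,N+1,\dots,N+M}_{j,N+1,\dots,N+M}(q^Mu)\big)$ satisfies the reflection equation defining $\mathrm{Y}_q^{\tw}(\g_N)$, so that $s_{ij}(u)\mapsto \wt s_{ij}(u)$ extends to an algebra morphism $\mathrm{Y}_q^{\tw}(\g_N)\to\mathrm{Y}_q^{\tw}(\g_{N+M})$. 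As in the quantum determinant case this is a consequence of Muir's law together with centrality: after antisymmetrizing over the copies of $\mathrm{End}\,\mathbb{C}^{N+M}$ labelled by $J=\{N+1,\dots,N+M\}$, the reflection relation of the $(N+M)\times(N+M)$ matrix restricts to that of the $N\times N$ matrix of minors, while the accompanying factors $\sdet_q S_J$ are central and cancel.

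For the determinant identity, the key move is to apply Muir's law (Theorem~\ref{skly muir law}) with adjoined set $J$ to the explicit expansion of $\sdet_q S(u)$ supplied by Theorem~\ref{sdet formula}. Adjoining $J$ replaces each principal constituent of that expansion by the corresponding constituent of the larger matrix, together with a factor $\sdet_q(S_J)^{-1}$; since $\{1,\dots,N\}\cup J=\{1,\dots,N+M\}$, the top term produces $\sdet_q S(q^Mu)$, and each successive step contributes one factor $\sdet_q\!\big(S_J(q^{M-2i}u)\big)^{-1}$ with the spectral parameter shifted by the appropriate power of $q^{-2}$. This is designed to yield the intermediate identity
\begin{equation*}
\sdet_q\!\big(S_J(q^Mu)\big)^{-1}\,\sdet_q S(q^Mu)
=\sdet_q\big(\wt S(u)\big)\prod_{i=0}^{N-1}\sdet_q\!\big(S_J(q^{M-2i}u)\big)^{-1},
\end{equation*}
the exact $q^M$-shifted analogue of the intermediate identity in the proof of Theorem~\ref{qdet sylvester}. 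Cancelling the $i=0$ factor of the product against $\sdet_q(S_J(q^Mu))^{-1}$ and rearranging then gives
$$\sdet_q\big(\wt S(u)\big)=\prod_{i=1}^{N-1}\sdet_q\!\big(S_J(q^{M-2i}u)\big)\,\sdet_q S(q^Mu),$$
which is the asserted formula.

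The hard part will be the bookkeeping internal to Theorem~\ref{sdet formula}. Unlike the quantum determinant expansion, whose factors are the single entries $l_{ij}$, the Sklyanin expansion carries the corrected symbols $s^\sharp_{p_kp_k'}$, which are linear combinations of $s_{p_kp_k'}$ and $s_{p_k'p_k}$ rather than honest principal minors; moreover Muir's law (Theorem~\ref{skly muir law}) is formulated only for principal minors $\sdet_q S_I$. I would therefore first recast the expansion through the recurrence and the auxiliary minors $\check s$ used in the proof of Theorem~\ref{sdet formula}, reducing it to products of principal Sklyanin minors before adjoining $J$, and then verify that the Muir adjunction is compatible with the $s^\sharp$-correction, namely that the $s^\sharp$ built from the minors $\wt s_{ij}$ of the small matrix matches the $s^\sharp$ of the large matrix. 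Keeping track of all the spectral shifts ($q^Mu$, $q^{2-2n}u$, $q^{-2n}u$, $q^{M-2i}u$) and of the powers of $-q$ coming from $l(p')-l(p)$, so that they assemble into precisely the product above, is where the real effort lies.
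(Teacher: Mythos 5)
There is a genuine gap in your plan for both halves of the theorem, and it comes from the same source: Muir's law for Sklyanin determinants (Theorem~\ref{skly muir law}) is only available for \emph{principal} minors $\sdet_q S_{I}(u)$, because it rests on the Sklyanin--Jacobi theorem (Theorem~\ref{skly jacobi thm}), which in this setting is proved only for principal index sets. But the objects you need to control are the non-principal minors $s^{i,N+1,\dots,N+M}_{j,N+1,\dots,N+M}(u)$ with $i\neq j$: they appear both in transporting the reflection relation (which involves products of arbitrary entries $s_{ia}s_{jb}$) and in the constituents $s^{\sharp}_{p_kp_k'}$, $s_{p_kp_k'}$ of the expansion in Theorem~\ref{sdet formula}. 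Your proposed repair --- ``recast the expansion \dots reducing it to products of principal Sklyanin minors before adjoining $J$'' --- cannot succeed, since the Sklyanin determinant genuinely involves off-diagonal entries and is not expressible through principal minors alone. So the direct transplant of the proof of Theorem~\ref{qdet sylvester} breaks down precisely because in the quantum-determinant case Muir's law covers general minors $\det_q\bigl((L^{\pm}(u))^{I}_{J}\bigr)$, while its Sklyanin counterpart does not.

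The paper's route is different and supplies exactly the missing ingredient. Writing $X(u)=C^{-1}S(q^{-N-M}u)^{-1}C$ in block form, the comatrix identities (Proposition~\ref{Skl comatrix} together with Theorem~\ref{jacobi comatrix}) give
\begin{equation*}
\wt s_{ij}(u)=\sdet_q\bigl(S(q^Mu)\bigr)\,\sdet_{q^{-1}}\bigl(X_{11}(q^{2-N}u)\bigr)\,z_{ij}(u)
=\sdet_q\bigl(S_J(q^Mu)\bigr)\,z_{ij}(u),
\end{equation*}
where $Z(u)=DX_{11}^{-1}(q^{N}u)D^{-1}$ satisfies the $q$-reflection relation. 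This identity, valid for all $i,j$ including $i\neq j$, is what proves the morphism statement (the scalar prefactor commutes with the $z_{ij}(u)$), and it reduces the determinant identity to pulling the commuting scalar factors out of the explicit formula of Theorem~\ref{sdet formula} and evaluating $\sdet_q Z(u)$ via Corollary~\ref{inverse sdet} and Jacobi's theorem. If you want to salvage your approach, you would essentially have to prove a Muir-type adjunction for the non-principal Sklyanin minors $s^{a,J}_{b,J}$, and that is precisely the content of Theorem~\ref{jacobi comatrix}; at that point you have rejoined the paper's argument.
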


\begin{proof}
Let $S(u)$ be  the generator matrix %$(M+N)\times (M+N)$ matrix
for $\mathrm Y_q^{\tw}(\g_{N+M})$, we can write $X(u)=C^{-1}S^{-1}(q^{-N-M}u)C$ as a block matrix
\begin{equation}
  \begin{pmatrix}
      X_{11}(u) &X_{12}(u)\\
      X_{21}(u) &X_{22}(u)
    \end{pmatrix},
\end{equation}
where $X_{11}(u)$ and $X_{22}(u)$ are respectively matrices of size $N\times N$ and $M\times M$.
Then $X_{11}(u)$ satisfies the $q^{-1}$-reflection relation.
The inverse of $X_{11}(q^{2N-2}u)$ is  $(\sdet_{q^{-1}}{X}_{11}(u))^{-1}\hat{X}_{11}(u)$.
Denote  $Z(u)=D X_{11}^{-1}(q^{N}u)D^{-1}$, then
$Z(u)$ satisfies the $q$-reflection relation.

It follows from Proposition \ref{Skl comatrix} that the $(i,j)$-th entry of $\hat{X}_{11}(u)$
is $(-q)^{i-N}\check{X}_{1,\cdots,\hat{i},\cdots,N,j}^{1,\cdots,N}(u)$.
By Theorem \ref{jacobi comatrix} we have that
\begin{equation}
  \begin{split}
 \tilde{s}_{ij}(u)
 &=s^{i,M+1,\cdots,M+N}_{j,M+1,\cdots,M+N}(q^Mu) \\
 &=(-q)^{j-N} \sdet_q(S(q^Mu)) \check X_{1,\cdots,\hat{i},\cdots,N,j}^{1,\cdots,N}(q^{2-N}u)\\
 &=(-q)^{j-i} \sdet_q(S(q^M u)) (\hat{X}_{11}(q^{2-N}u))_{ij}\\
 &= (-q)^{j-i} \sdet_q(S(q^Mu)) \sdet_{q^{-1}}({X}_{11}(q^{2-N}u) ) (X_{11}(q^{N}u)^{-1})_{ij}  \\
 &=\sdet_q(S(q^Mu)) \sdet_{q^{-1}}({X}_{11}(q^{2-N}u) ) z_{ij}(u).\\
\end{split}
\end{equation}
Since $\sdet_q(S(q^Mu))$ and $\sdet_{q^{-1}}({X}_{11}(q^{2-N}u) )$ commute with $z_{ij}(u)$ for any $1\leq i,j\leq N$,
$\tilde S(u)$ satisfies the $q$-reflection relation.
This proves the first statement.

By Jacobi's theorem,
\begin{equation}\label{X Y relation}
  \sdet_q(S(q^Mu)) \sdet_{q^{-1}}({X}_{11}(q^{2-N}u) ) =\sdet_q(S_J(q^M u)) ,
    \end{equation}
then
$
 \tilde{s}_{ij}(u)= \sdet_q(S_J(q^Mu))  z_{ij}(u)$.
Using the explicit formula for Sklyanin determinants,  we have that
\begin{equation}
 \sdet_q (\tilde S(u))= \Pi_{i=0}^{N-1} \sdet_q(S_{J}(q^{M-2i}u)) \sdet_q (Z(u)).
    \end{equation}

It follows from Jacobi's identity that
\begin{equation}
  \sdet_{q^{-1}}(X_{11}(u)) \sdet_{q}(Z(q^{N-2}u))=1.
    \end{equation}
This implies that
\begin{equation}
 \sdet_{q}(Z(u))=\sdet_q(S_{J}(q^M)u)^{-1} \sdet_q(S(q^Mu)).
\end{equation}
Therefore,
\begin{equation}
 \sdet_q (\tilde S(u))= \Pi_{i=1}^{N-1} \sdet_q(S_{J}(q^{M-2i}u))  \sdet_q(S(q^Mu)).
    \end{equation}

\end{proof}

\subsection{Liouville formula}

In this section, we regard the twisted $q$-Yangians as subalgebras of the
quantum affine algebra at level $c=0$.
Recall that $S(u)=L^-(u)G \L^+(u^{-1})^{t}$ ,$\ol S(u)=L^+(u)G  L^-(u^{-1})^{t}$. In the orthogonal case, $G=I$.
In the following we derive a common reflection relation involving both $S(u)$ and $\ol S(u)$ which will be used in the Liouville formula.
  \begin{proposition} We have that
\begin{equation}\label{reflection relation1}
  \begin{split}
R(u,v)\overline S_{1}(u)R^{t}(u^{-1}, v)S_{2}(v)=S_{2}(v)R^{t}(u^{-1}, v)\overline S_{1}(u)R(u, v).
 \end{split}
  \end{equation}
\end{proposition}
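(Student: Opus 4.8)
The plan is to substitute the level-$c=0$ expressions $\ol S_1(u)=L^+_1(u)\,G_1\,(L^-_1(u^{-1}))^{t_1}$ and $S_2(v)=L^-_2(v)\,G_2\,(L^+_2(v^{-1}))^{t_2}$ into \eqref{reflection relation1} and to convert the left-hand side into the right-hand side using only the commutation relations of $\mathrm U_q(\widehat{\gl}_N)$. The guiding observation is that at $c=0$ the defining relations \eqref{e:defn}, together with the companion relation for $L^-_1L^+_2$, all reduce to the single shape
\begin{equation*}
R(u/v)\,X_1(u)\,Y_2(v)=Y_2(v)\,X_1(u)\,R(u/v),\qquad X,Y\in\{+,-\},
\end{equation*}
so that the $\pm$-grading is irrelevant to every commutation and the entire computation is unchanged under the exchange $L^+\leftrightarrow L^-$ carried out in one tensor slot. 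The reflection relation for $S(u)$ (the defining relation of $\mathrm Y_q^{\tw}(\g_N)$, established in \cite{MRS}) is proved by exactly this substitution-and-commutation scheme with $S_i(w)=L^-_i(w)\,G_i\,(L^+_i(w^{-1}))^{t_i}$; I would therefore reproduce that computation after swapping $L^-_1\leftrightarrow L^+_1$ in the first slot only, which replaces $S_1(u)$ by $\ol S_1(u)$ while leaving $S_2(v)$ intact and yields \eqref{reflection relation1}. Here $R(u,v)$ and $R^t(u^{-1},v)$ coincide, up to the scalar fixed by homogeneity, with the matrices $R(u/v)$ and $R^t(1/uv)$ of the reflection relation, and this rescaling does not affect the validity of a reflection-type identity.

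Concretely I would push the factors of $\ol S_1(u)$ past those of $S_2(v)$ one at a time. The two leftmost factors $L^+_1(u)$ and $L^-_2(v)$ are exchanged by the untransposed relation \eqref{e:defn} (with outer matrix $R(u,v)$); the two partially transposed factors $(L^-_1(u^{-1}))^{t_1}$ and $(L^+_2(v^{-1}))^{t_2}$ are exchanged by the partial transpose of \eqref{e:defn}, which takes the form $R^{t_1}(w)\,(L_1)^{t_1}\,L_2=L_2\,(L_1)^{t_1}\,R^{t_1}(w)$ and is obtained just as in the proof of Proposition \ref{center z relation}; and the two mixed cross-terms are moved through by the once-transposed relations. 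To reorder the outer $R(u,v)$ and the inner $R^t(u^{-1},v)$ across these moves I would use the transposed Yang--Baxter identities recorded in Lemma \ref{A H relation} (the $t_1$- and $t_1t_2$-transposes of \eqref{YBE}) together with the crossing symmetry \eqref{cross rel}, which guarantee that the intermediate $R$- and $R^t$-factors reassemble in the order required on the right. Since each individual move uses an R-matrix that at $c=0$ does not see the $\pm$-labels, every step has an identical counterpart in the reflection-equation computation, and the two sides agree term by term.

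The main difficulty is the bookkeeping of the partial transposes rather than any new algebraic ingredient. Partial transposition is not an anti-automorphism, so one must track the order of the factors in each of the two tensor slots separately and invoke the correctly transposed form ($R^{t_1}$, $R^{t_2}$, or $R^{t_1t_2}$) of each relation at every step. The second delicate point is keeping the spectral parameters aligned: the inverted arguments $u^{-1}$ and $v^{-1}$ inside $\ol S_1(u)$ and $S_2(v)$, and the product argument in $R^t(u^{-1},v)=R^t(1/uv)$, must be matched against the shifts in \eqref{e:defn} and the crossing symmetry \eqref{cross rel} so that no spurious scalar or shift survives. Once these two pieces of bookkeeping are arranged correctly, \eqref{reflection relation1} follows from the reflection-equation computation with the single-slot swap $L^+\leftrightarrow L^-$.
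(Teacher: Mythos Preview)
Your high-level argument---substitute $\ol S_1(u)=L^+_1(u)G_1(L^-_1(u^{-1}))^{t_1}$ and $S_2(v)=L^-_2(v)G_2(L^+_2(v^{-1}))^{t_2}$ and push factors through using (partially transposed) RTT relations---is exactly the paper's route, and your observation that at $c=0$ the computation is the MRS reflection-relation proof with the single-slot swap $L^+_1\leftrightarrow L^-_1$ is correct and a clean way to package it.

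However, your detailed plan misidentifies the mechanism by which the $R$-factors are reassembled. The paper does \emph{not} use the crossing symmetry \eqref{cross rel}, nor Lemma~\ref{A H relation}. After commuting $L^+_1(u)$ past $L^-_2(v)$ and $(L^-_1(u^{-1}))^{t_1}$ past $L^-_2(v)$ (via the $t_1$-transposed RTT), one is left with the constant block $R(u,v)\,G_1\,R^t(u^{-1},v)\,G_2$ in the middle; the key step is that $G$ itself satisfies the numerical reflection equation
\[
R(u,v)\,G_1\,R^{t}(u^{-1},v)\,G_2 \;=\; G_2\,R^{t}(u^{-1},v)\,G_1\,R(u,v),
\]
which simultaneously swaps the two $R$-factors and the two $G$'s. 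This is trivial when $G=I$ (orthogonal case) but is a genuine ingredient in the symplectic case. After that, the remaining moves use the $t_1t_2$-transpose of RTT (together with $R^{t_1t_2}(u,v)=PR(u,v)P$ and $R(u,v)\propto R(v^{-1},u^{-1})$) and a $t_2$-transposed RTT. Your swap argument already implicitly imports this $G$-step from the MRS proof; in your explicit step list you should replace the crossing-symmetry/YBE invocation by the $G$ reflection identity.
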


\begin{proof} We consider the equation in the quantum affine algebra. The left side can be written as
\begin{equation}
  \begin{aligned}
  &R(u,v)\ol S_{1}(u)R^{t}(u^{-1},v)S_{2}(v)\\
&=R(u, v)L^+_1(u)G_1  L^-_1(u^{-1})^{t}R^{t}(u^{-1},  v)L^-_2(v)G_2 L^+_2(v^{-1})^{t}.
 \end{aligned}
  \end{equation}
 Taking transposition $t_1$ to the $RTT$ equation with respect to $L^-(u)$,
  we get that
   \begin{equation}
L^-_1(u)^t R  (u,  v) ^t L^-_2(v)= L^-_2(v)    R  (u,  v)^tL^-_1(u)^t.
  \end{equation}
  Thus,
  \begin{equation}
  \begin{aligned}
    &R(u,v)\ol S_{1}(u)R^{t}(u^{-1},v)S_{2}(v)\\
    &=R(u, v)L^+_1(u)G_1  L^-_2(v)  R^{t}(u^{-1},  v) L^-_1(u^{-1})^{t} G_2 L^+_2(v^{-1})^{t}\\
    &=R(u, v)L^+_1(u) L^-_2(v) G_1   R^{t}(u^{-1},  v) G_2 L^-_1(u^{-1})^{t}  L^+_2(v^{-1})^{t}.
 \end{aligned}
  \end{equation}
  Using the $RTT$ relation, this is equal to
  \begin{equation}
    \begin{split}
 L^-_2(v) L^+_1(u)  R(u, v) G_1   R^{t}(u^{-1},  v) G_2 L^-_1(u^{-1})^{t}  L^+_2(v^{-1})^{t}.\\
   \end{split}
    \end{equation}
    Since the matrix $G$ satisfies the reflection relation in both cases, we get
    \begin{equation}
      \begin{aligned}
        &R(u,v)\ol S_{1}(u)R^{t}(u^{-1},v)S_{2}(v)\\
        &=
   L^-_2(v) L^+_1(u)  G_2   R^{t}(u^{-1},  v)  G_1  R(u, v) L^-_1(u^{-1})^{t}  L^+_2(v^{-1})^{t}.
     \end{aligned}
      \end{equation}

   Taking transposition $t_1$ and $t_2$ consecutively to the equation
 \begin{equation}
 R  (u,  v) L^+_1(u) L^-_2(v)=  L^-_2(v)  L^+_1(u)  R  (u,  v),
  \end{equation}
we get that
   \begin{equation}
L^+_1(u)^t L^-_2(v)^t R  (u,  v) ^{t_1t_2} =     R  (u,  v) ^{t_1t_2} L^-_2(v)^t  L^+_1(u)^t.
  \end{equation}
   Since $R  (u,  v) ^{t_1t_2}=P R  (u,  v)P$,
     \begin{equation}
L^+_2(u)^t L^-_1(v)^t R  (u,  v)   =    R  (u,  v)  L^-_1(v)^t L^+_2(u)^t.
  \end{equation}

As $R(u,  v)$ is equal to $R  (v^{-1},  u^{-1})$ up to constant,
replacing $u$ and $v$ by
$v^{-1}$ and $u^{-1}$ respectively we have that
\begin{equation}
L^+_2(v^{-1})^t L^-_1(u^{-1})^t R  (u,  v)   =    R  (u,  v)   L^-_1(u^{-1})^t L^+_2(v^{-1})^t.
  \end{equation}
Then
\begin{equation}
  \begin{aligned}
    &R(u,v)\ol S_{1}(u)R^{t}(u^{-1},v)S_{2}(v)\\
    &=L^-_2(v) L^+_1(u)  G_2   R^{t}(u^{-1},  v)  G_1    L^+_2(v^{-1})^{t} L^-_1(u^{-1})^{t}  R(u, v)\\
&=L^-_2(v) G_2 L^+_1(u)     R^{t}(u^{-1},  v)      L^+_2(v^{-1})^{t} G_1 L^-_1(u^{-1})^{t}  R(u, v).
 \end{aligned}
  \end{equation}

The $RTT$ relation with respect to $L^+(u)$ is equivalent to
      \begin{equation}
       P R  (u,  v)P L^+_2(u) L^+_1(v) =  L^+_1(v)  L^+_2(u)  PR  (u,  v)P.
         \end{equation}
Taking transposition $t_2$ we get
\begin{equation}
  L^+_2(u)^t R ^t (u,  v) L^+_1(v) =  L^+_1(v)  R ^t (u,  v) L^+_2(u)^t.
    \end{equation}
Replacing $u$ and $v$ by $v^{-1}$ and $u $ respectively we have
    \begin{equation}
      L^+_2(v^{-1})^t R ^t (v^{-1},  u) L^+_1(u) =  L^+_1(u)  R ^t (v^{-1},  u) L^+_2(v^{-1})^t.
        \end{equation}

Note that
$R ^t (v^{-1},  u)$ is equal to $R ^t (u^{-1},  v)$ up to a constant.
Therefore,
\begin{equation}
  \begin{aligned}
    &R(u,v)\ol S_{1}(u)R^{t}(u^{-1},v)S_{2}(v)\\
&=L^-_2(v) G_2  L^+_2(v^{-1})^{t}  R^{t}(u^{-1},  v)  L^+_1(u)  G_1 L^-_1(u^{-1})^{t}  R(u, v)\\
&=S_{2}(v)R^{t}(u^{-1}, v)\overline S_{1}(u)R(u, v).
 \end{aligned}
  \end{equation}
\end{proof}

Multiplying from both sides of the relation \eqref{reflection relation1} by the inverse of
$R(u, v)$, $\overline S_{1}(u)$ and $R^{t}(u^{-1},v)$,
we get that
\begin{equation}
    \begin{split}
   S_{2}(v) R^{-1}(u,v)\overline S_{1}(u)^{-1} R^{t}(u^{-1}, v)^{-1} \\
   = R^{t}(u^{-1}, v)^{-1} \overline S_{1}(u)^{-1}R^{-1}(u,v)S_{2}(v).
   \end{split}
    \end{equation}

Taking $u=q^{-2N}v^{-1}$ and  using the relations \eqref{inverse of R} and  \eqref{cross rel} , we have
\begin{equation}
  \begin{split}
 D_{1}^{-1} S_{2}(v) \wt R'(q^{-2N}v^{-2})  \overline{S}_{1}(q^{-2N}v^{-1})^{-1} D_1   Q \\
=QD_{1}^{-1}
\overline{S}_{1}(q^{-2N}v^{-1})^{-1}  \wt R '(q^{-2N}v^{-2})  S_{2}(v)D_1  .
 \end{split}
  \end{equation}
  where  $\wt R'(x)$ is the inverse of $R(x)$ up to constant, and it is the $R$ matrix obtained from $\wt R(x)$ by replacing $q$ with $q^{-1}$
 Replacing $v$ with $uq^{-2N}$.
  \begin{equation}\label{Liouville}
    \begin{split}
    D_{1}^{-1}   S_{2}(q^{-2N}u)  \wt R'(q^{2N}u^{-2})   \overline{S}_{1}(u^{-1})^{-1}  D_1   Q\\
  =Q   D_{1}^{-1} \overline{S}_{1}(u^{-1})^{-1} \wt R' (q^{2N}u^{-2})   S_{2}(q^{-2N}u) D_{1} .
   \end{split}
    \end{equation}
  Since $Q$ is a one-dimensional operator satisfying $Q^2=NQ$,
  the equation must be $Q$ times a series in $u^{-1}$ with coefficients in $Y_q^{\tw}(\g_N)$.
  We denote the series by $\alpha_N(u)y(u)$, where
   \begin{equation}
    \alpha_N(u)= \begin{cases}
       1,  & \quad \text{Case }(\mathfrak{o}_N),\\
       \frac{q^2 u^2-1}{q^2-u^2}, &\quad \text{Case }(\mathfrak{sp}_N).
      \end{cases}
    \end{equation}
We now fix $y(u)$ by Sklyanin determinants.
\begin{theorem} On the algebra $Y_q^{\tw}(\g_N)$, we have that
\begin{equation}
  y(u)=\frac{\sdet_q(S(q^{-2}u))}{\sdet_q(S(u))}.
\end{equation}
\end{theorem}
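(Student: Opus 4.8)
The plan is to reduce the reflection-algebra Liouville relation \eqref{Liouville} to the two quantum affine Liouville relations of Proposition \ref{center z relation} by substituting the level $c=0$ factorizations $S(u)=L^-(u)G\,L^+(u^{-1})^t$ and $\overline S(u)=L^+(u)G\,L^-(u^{-1})^t$, and then to convert the resulting $z^\pm$ factors into quantum determinants via Theorem \ref{qdet z}. First I would extract the scalar series $\alpha_N(u)y(u)$ from \eqref{Liouville}: since $Q$ is the rank-one operator with $Q^2=NQ$, equality of the two sides forces both to be a multiple of $Q$, and applying $\tfrac1N\,\mathrm{tr}_{12}$ (equivalently, testing against $\sum_i e_i\otimes e_i$) isolates $\alpha_N(u)y(u)=\tfrac1N\,\mathrm{tr}_{12}(MQ)$, where $M:=D_1^{-1}S_2(q^{-2N}u)\wt R'(q^{2N}u^{-2})\overline S_1(u^{-1})^{-1}D_1$.

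Next I would substitute
\[
S_2(q^{-2N}u)=L^-_2(q^{-2N}u)G_2L^+_2(q^{2N}u^{-1})^t,\qquad
\overline S_1(u^{-1})^{-1}=(L^-_1(u)^t)^{-1}G_1^{-1}L^+_1(u^{-1})^{-1}
\]
into $M$ and disentangle $MQ$. Using the crossing symmetry \eqref{cross rel}, the $RTT$ relations, and the identities $QX_1=QX_2^t$, $X_1Q=X_2^tQ$, the aim is to collect the $L^-$ and $L^+$ factors into two separate blocks: an $L^-$ block matching the relation of Proposition \ref{center z relation} that produces $z^-(u)$ (in its alternate form $L^-_2(q^{-2N}u)D_1^{-1}(L^-_1(u)^t)^{-1}D_1Q=Qz^-(u)$), and an $L^+$ block matching the relation producing $z^+(\cdot)^{-1}$, evaluated at the shifted spectral parameter $q^{2N}u^{-1}$. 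This should identify
\[
\alpha_N(u)\,y(u)=\alpha_N(u)\,\frac{\gamma_N(q^{-2}u)}{\gamma_N(u)}\,z^-(u)\,z^+(q^{2N}u^{-1})^{-1},
\]
the prefactor $\alpha_N(u)$ together with the ratio of $\gamma_N$'s being the scalar released when the two copies of $G$ and the middle matrix $\wt R'(q^{2N}u^{-2})$ are pushed through $Q$; in the orthogonal case $G=I$ and both collapse to $1$.

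Finally I would invoke Theorem \ref{qdet z} to write $z^-(u)=\det_qL^-(q^{-2}u)/\det_qL^-(u)$ and $z^+(q^{2N}u^{-1})^{-1}=\det_qL^+(q^{2N}u^{-1})/\det_qL^+(q^{2N-2}u^{-1})$, and then, using the factorization $\sdet_q S(u)=\gamma_N(u)\det_qL^-(u)\det_qL^+(q^{2N-2}u^{-1})$, recognize that
\[
\frac{\gamma_N(q^{-2}u)}{\gamma_N(u)}\,z^-(u)\,z^+(q^{2N}u^{-1})^{-1}=\frac{\sdet_q S(q^{-2}u)}{\sdet_q S(u)},
\]
which yields the claimed value of $y(u)$ after cancelling the common factor $\alpha_N(u)$. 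Note that the $L^+$-argument bookkeeping $\det_qL^+\!\big(q^{2N-2}(q^{-2}u)^{-1}\big)=\det_qL^+(q^{2N}u^{-1})$ is what makes the two halves recombine.

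The main obstacle is the disentangling step: commuting the $L^+$ and $L^-$ matrices past the middle $R$-matrix $\wt R'(q^{2N}u^{-2})$ and the two copies of $G$ so that they regroup \emph{exactly} into the two patterns of Proposition \ref{center z relation}, while tracking every spectral-parameter shift. The delicate point is checking that the leftover scalar is precisely $\alpha_N(u)\,\gamma_N(q^{-2}u)/\gamma_N(u)$ in the symplectic case, so that it collapses to the clean ratio of Sklyanin determinants; this is where the explicit forms of $\alpha_N$ and $\gamma_N$ must be matched against the scalar coming from $G$ and the normalization of $\wt R'$.
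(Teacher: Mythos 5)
Your proposal is correct and follows essentially the same route as the paper: substitute the level-zero factorizations of $S$ and $\overline S$ into the Liouville relation, use the $RTT$ relations and the identities of Proposition \ref{center z relation} to split off $z^-(u)$ and $z^+(q^{2N}u^{-1})^{-1}$, read off the scalar released by the two copies of $G$ and the middle $\wt R'$ (the paper calls it $\beta_N(u)$ and checks $\beta_N(u)/\alpha_N(u)=\gamma_N(q^{-2}u)/\gamma_N(u)$, which is exactly your bookkeeping), and recombine via Theorem \ref{qdet z} and the factorization $\sdet_q S(u)=\gamma_N(u)\det_qL^-(u)\det_qL^+(q^{2N-2}u^{-1})$. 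The disentangling step you flag as the main obstacle is carried out in the paper precisely with the tools you name, so no gap remains.
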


\begin{proof}
We write $S(u)=L^-(u)G \L^+(u^{-1})^{t}$ ,$\ol S(u)=L^+(u)G  L^-(u^{-1})^{t}$, then
\begin{equation}
  \begin{split}
    \alpha_N(u)y(u)Q
=Q D_{1}^{-1} {(L^-_1(u)^t)}^{-1}  G_1^{-1} L^+_1(u^{-1})^{-1} \\
 \wt R'(q^{2N}u^{-2})
L^-_2(q^{-2N}u)G_2 L^+_2(q^{2N}u^{-1})^{t} D_{1}.\\
 \end{split}
  \end{equation}

Multiplying the inverse of $R(u,v)$ and $ L^+_1(u) $ to the relation
\begin{equation}
  R  (u,  v) L^+_1(u) L^-_2(v)=  L^-_2(v)  L^+_1(u)  R  (u,  v)
   \end{equation}
we have that
\begin{equation}
  \begin{split}
L^-_2(v)R(u,v)^{-1}L^+_1(u)^{-1} = L^+_1(u)^{-1}R(u,v)^{-1}L^-_2(v).
 \end{split}
  \end{equation}
Thus
\begin{equation}
  \begin{split}
    \alpha_N(u)  y(u)Q
=Q D_{1}^{-1} {(L^-_1(u)^t)}^{-1}  G_1^{-1} L^-_2(q^{-2N}u)  \\
 \wt R'(q^{2N}u^{-2})  L^+_1(u^{-1})^{-1}
G_2 L^+_2(q^{2N}u^{-1})^{t} D_{1}\\
=Q D_{1}^{-1} {(L^-_1(u)^t)}^{-1}   D_1  L^-_2(q^{-2N}u) D_1^{-1} G_1^{-1}  \\
 \wt R'(q^{2N}u^{-2})  G_2 L^+_1(u^{-1})^{-1} D_{1}
 L^+_2(q^{2N}u^{-1})^{t}. \\
 \end{split}
  \end{equation}

It follows from Proposition \ref{center z relation} that
\begin{equation}
  \begin{split}
Q D_1^{-1} (L^{-}_1(u)^t)^{-1}D_1 L^{-}_2(q^{-2N}u)=Qz^{-}(u) .
 \end{split}
  \end{equation}
Then
\begin{equation}
  \begin{aligned}
   & y(u)Q\\
&=z^-(u)Q D_1^{-1} G_1^{-1}
 \wt R'(q^{2N}u^{-2})  G_2 L^+_1(u^{-1})^{-1}
 L^+_2(q^{2N}u^{-1})^{t} D_{1}.
 \end{aligned}
  \end{equation}

  By direct computation we have that
  \begin{equation}
    \begin{aligned}
      &Q D_1^{-1} G_1^{-1}
     \wt R'(q^{2N}u^{-2})  G_2 D_1 \\
     &=   D_{1}^{-1}  G_2  \wt R '(q^{2N}u^{-2})   G_1^{-1}  D_1   Q=Q\beta_N(u)
   \end{aligned}
    \end{equation}
where the constant $\beta_N(u)$ is given by
\begin{equation}
  \beta_N(u)=\begin{cases}
     1,   &\quad \text{Case }(\mathfrak{o}_N),\\
     \frac{q^2u^2-q^{2N}}{q^{2N+2}-u^2}, & \quad \text{Case }(\mathfrak{sp}_N).
    \end{cases}
  \end{equation}

By definition of $z^{+}(u)$, we have
\begin{align}\notag
&Q  D_1^{-1}
L^+_1(u^{-1})^{-1} D_{1}
L^+_2(q^{2N}u^{-1})^{t} \\
&=Q L^+_1(q^{2N}u^{-1}) D_1^{-1}
L^+_1(u^{-1})^{-1} D_{1}\\ \notag
& =z^+(q^{2N}u^{-1})^{-1}.
  \end{align}
On the other hand,
\begin{equation}
    \begin{aligned}
y(u)&=\frac{\beta_N(u)}{\alpha_N(u)}z^-(u)z^+(q^{2N}u^{-1})^{-1}\\
&=\frac{\beta_N(u)}{\alpha_N(u)} \frac{{\det}_{q}(L^-(q^{-2}u))  {\det}_{q}L^+(q^{2N}u^{-1})} {{\det}_{q}(L^-(u)) {\det}_{q}L^+(q^{2N-2}u^{-1})}.
   \end{aligned}
    \end{equation}
Now it is easy to see that
\begin{equation}
 \frac{\beta_N(u)}{\alpha_N(u)} = \frac{\gamma_N(q^{-2}u)}{\gamma_N(u)}.
  \end{equation}
Thus,
    \begin{equation}
      \begin{aligned}
  y(u)&=\frac{\beta_N(u)}{\alpha_N(u)}z^-(u)z^+(q^{2N}u^{-1})^{-1}\\
  &=\frac{\sdet_q(S(q^{-2}u))}{\sdet_q(S(u))}.
     \end{aligned}
      \end{equation}
      \end{proof}

\bigskip
\centerline{\bf Acknowledgments}
\medskip
We would like to thank Alexander Molev for stimulating discussions.
The work is supported in part by the National Natural Science Foundation of China (grant nos.
12001218 and 12171303), the Simons Foundation (grant no. MP-TSM-00002518),
and the Fundamental Research Funds for
 the Central Universities (grant nos. CCNU22QN002 and
CCNU22JC001).

\bibliographystyle{amsalpha}

\end{document}